\documentclass[12pt,a4paper,twoside]{article}
\usepackage[T1]{fontenc}

\usepackage{enumerate}
\usepackage{amsthm,amsmath,amssymb,graphics}
\usepackage{graphicx}
\usepackage{psfrag}

\input xy
\xyoption{all}

\newcommand{\field}[1]{\mathbb{#1}}
\newcommand{\R}{\field{R}}

\newcommand{\N}{\field{N}}

\newcommand{\Z}{\field{Z}}

\newtheorem{theorem}{Theorem}

\newtheorem{proposition}{Proposition}[section]

\newtheorem{corollary}[proposition]{Corollary}
\newtheorem{definition}[proposition]{Definition}

\newtheorem{lemma}[proposition]{Lemma}

\newtheorem{remark}[proposition]{Remark}

\textheight=23cm
\textwidth=15,4cm
\topmargin 0cm
\evensidemargin 0,3cm
\oddsidemargin 0,3cm

\title{\textbf{ SIMPLICIAL APPROXIMATION AND COMPLEXITY GROWTH } }
\author{\small DANIEL J. PONS}
\date{\small \textit{ \hspace{13cm}To Antonia} }

\begin{document}

\maketitle


\begin{footnotesize}
\textbf{Abstract:} This work is motivated by two problems: 1) The approach
of manifolds and spaces by triangulations. 2) The complexity growth in sequences
of polyhedra. Considering both problems as related, new criteria and methods for
approximating  smooth manifolds are deduced. When the sequences of polyhedra are obtained by
the  action of a discrete group or semigroup, further control is 
given by geometric, topologic and complexity observables. We give a set of relevant examples
to illustrate the results, both in infinite and finite dimensions. 
\end{footnotesize}



\section{Introduction}

Analysis situs, an ancestor of modern topology, arose as a clandestine area of mathematics
in the nineteenth century. Gradually it became more accepted, thanks to the work of 
H. Poincar\'e, P. Alexandrov, O. Veblen, H. Hopf, J. Alexander, A. Kolmogorov, H. Weyl, L. Brouwer, H. Whitney,
W. Hodge and S. Lefschetz, among others.

One of its driving forces, the approximation of shapes (and spaces) through the juxtaposition of 
prisms or polyhedra, permeated to science and art,  
becoming   essential  in our view of the world.  From P. Picasso's cubism to quantum gravity,
human perception seemed  to  accept  simplices  as elementary blocks to approach
forms and space.

It is standard, from a mathematical perspective, to infer estimates of
error, complexity, and changes in both complexity and error in a process of approximation,
also to estabilish quantitative and qualitative
criteria for convergence, and infer bounds for the speed at which such a 
convergence (if any) occurs.

This paper is motivated by those problems; we obtain, using suitable tools,
results of this kind for evolving polyhedra on manifolds. We describe sequences
of complexes associated to coverings of spaces by open sets. Sequences
of this type, considered by P. Alexandrov (see \cite{Alexandroff}) under the name 
of projective spectra, yield, under suitable convergence
assumptions, approximations of a paracompact Hausdorff space up to homeomorphism.

We regard the number of simplices and the dimension of each complex in the sequence as a measure of 
its complexity, and control its growth not only in the limit, but also at every stage. This delivers sequences 
of irreducible complexes, those for which the excess of complexity is eliminated, say.
If the space in question is a differentiable manifold endowed with a Riemannian metric, those irreducible complexes,
together with  available tools from  geometric measure theory, yield a quantitative 
approximation as well.

To perform those constructions in a systematic way, we consider actions of discrete groups and semigroups,
say $\Gamma$, on complexes associated to coverings by open sets.
We describe $\Gamma$-representations/actions that yield convergent sequences of 
complexes, to make a connection with expansive systems, or e-systems; in those
systems the convergent sequence of complexes is obtained by iteration of a 
suitable initial simplicial complex, a generator, say.

If the space where $\Gamma$ acts expansively is a closed Riemannian manifold, estimates for 
the minimal complexity of the generating complex are achieved. This is possible
thanks to  comparison results in differential  geometry.

\bigskip
We briefly mention the contents of this work.

Sections \ref{opencovers-convergence}, \ref{dimension} 
and \ref{multi-index} provide some notation and framework.

In Section \ref{complexity-irred}  complexity functions for simplicial complexes are 
proposed, and we mention  their main and useful properties. 

Section \ref{realization} deals with concrete realizations of complexes in Euclidean space; 
this is needed, together with the functions introduced in Section \ref{complexity-irred}, to obtain
better approximations of spaces when  compared with those achieved by arbitrary 
convergent sequences (Section \ref{ponomarev}); this is developed
in Section \ref{metric} both from a qualitative and quantitative perspective. 

In Section \ref{control-convergence} increasing sequences of numbers
control  the complexity growth in sequences of complexes constructed from
finer and finer coverings, as measured by the functions introduced
in Section \ref{complexity-irred}, yielding a quantitative description of
the process in the limit. Those growths are measured by what we call 
the simplicial growth up to dimension $k$, denoted by $\text{ent}_{k}$,
and by the dimension growth, denoted by $\text{Dim}$. In fact
$\text{ent}_{k}$ is a generalization of what is known as 
topological entropy (see \cite{Wal}), meanwhile $\text{Dim}$
is a relative of mean dimension (see \cite{holomorphic-maps}).

\bigskip


Section \ref{dynamics} begins with  a natural framework for groups and semigroups
actions on spaces, usually known as $\Gamma$-spaces. We mention the natural morphisms 
between objects of this type, some advantages of this perspective, to define the
evolution of simplicial  complexes in $\Gamma$-spaces, where the growths of complexity   
can be measured.


In Section \ref{amenable} the exponential growth of the $0$-simplices  is studied under
assumptions on $\Gamma$, to infer some quantitative control  at every stage.


In Section \ref{expansivity} we describe a particular type of $\Gamma$-spaces,
namely $\Gamma$-spaces with property-e. The first remarkable issue of  
the expansive property, or property-e,  
is that it  can be characterized using  either
topological (set theoretic) or geometric tools. The set theoretic characterization leads to the concept of a
generator, an open cover that has a good response to the action of $\Gamma$, say. It could be 
seen as a  complex that under the action of $\Gamma$
evolves towards an acceptable approximation of the space. We describe  
in which sense the evolving  nerves of generating  covers 
approximate the space, and   recall fundamental 
results in geometry and topology that suit our developments. 
All the results from  previous Sections can be used in this scenario, 
and the adaptation of them is left to the reader.

In Section \ref{good-generator}, assuming that the
space is of Riemannian type, we provide estimates
to have a better control of the generating process. Those estimates
find concrete applications in Section \ref{geometrization}.

\bigskip


Finally, in Section \ref{graphs} and Section \ref{geometrization} we present
some examples. Section \ref{graphs}
deals with infinite dimensional examples where estimates for
the simplicial growth, as measured by the family 
$\{\ \text{ent}_{k}\ \vert\ k \in \mathbb{N}\ \}$, 
and the dimension growth, as measured by $\text{Dim}$, appear.
Section \ref{geometrization} describes finite dimensional closed  manifolds
for which an expansive action can be constructed. Some of the examples in
finite dimension
are not new, and the list of examples is far from being exhaustive nor definitive; their 
(not so detailed) description
is included for many purposes:
\begin{enumerate}
\item To ensure that the results of  Sections \ref{expansivity} and \ref{good-generator} are non-void, 
enabling concrete constructions and estimates. 
\item To have an idea of the methods used to construct them.
\item To allow the construction of new examples from known ones.
\end{enumerate}
Sections \ref{graphs}
and \ref{geometrization} are not entirely
independent: all the examples in Section \ref{geometrization} can be used in 
Section \ref{shifts} to construct infinite dimensional closed manifolds with property-e.

\smallskip


\section{Simplicial complexes, complexity and convergence } \label{main-properties}

We state properties of the canonical 
simplicial complex associated to the covering of a space by open sets, known as the
nerve of the covering. Some statements can be found in \cite{Alexandroff}, 
\cite{hurewicz}, \cite{Lefschetz}, and the references therein. Other 
properties are new (at least for the author), and all of them will be used 
in this  article.

\subsection{The nerve of open covers}   \label{opencovers-convergence}

If $V$ is a compact Hausdorff space\footnote{Some of the constructions and results 
are valid in more general 
spaces, but our intention is to
provide  examples in compact manifolds, usually without a boundary.} we denote by $\mathcal{C}_{V}$
the set of  covers of $V$ by open sets: one calls the members of
$\mathcal{C}_{V}$ \textbf{open covers}.

\begin{remark}
Since $V$ is compact, it suffices to identify $\mathcal{C}_{V}$ with
the totality of all \textbf{finite} covers by open sets of $V$ to simplify.
\end{remark}

If $\alpha$ and $\beta$ belong to $\mathcal{C}_{V}$, one says
that $\alpha$ is \textbf{finer} than $\beta$ if whenever $A$
is an element in $\alpha$ there exists some $B$ in $\beta$
such that $A \subseteq B$, and writes $\alpha \succ \beta$
if that is the case. This notion induces a partial order on
$\mathcal{C}_{V}$.

If $\{  \alpha, \beta  \} \subset \mathcal{C}_{V},$ one denotes
by $\alpha \cap \beta$ the refinement of $\alpha$ by $\beta$
(or equivalently the refinement of $\beta$ by $\alpha$): its elements
are intersections of one element from $\alpha$ and another
from $\beta$. One can write
$$\alpha \cap \beta := \text{l.u.b.} \{\ \gamma\ \vert\ \gamma \succ \alpha, \gamma \succ \beta\  \}  ,   $$
where $\text{l.u.b.}$ denotes the supremum (or least upper bound)
in $\mathcal{C}_{V}$ induced  by $\succ$.


\begin{remark}
One can play further with those notions and use the language of
lattices, something that we give for granted.
\end{remark}


Let $\alpha$ be given as $\{\ A_{i}\ \vert\ i \in I\   \}$, where $I$ is an
indexing  set
(finite since $V$ is compact).
Associated to $\alpha$ is a simplicial complex, known as
the \textbf{nerve} of $\alpha$, that we denote by $K(\alpha)$,
uniquely defined up to homotopy, and whose simplices
are constructed as follows:
for every $k$ in $\N$ the set of $k$-dimensional \textbf{simplices}
of $K(\alpha)$, denoted by $\triangle_{k}(\alpha)$, is given by
$$\{\ [ a_{i(0)},..., a_{i(k)} ]\ \vert\ \bigcap_{r=0}^{k} A_{i(r)} \neq \emptyset         \ \},$$
where for each $i$ in $I$ we identify the open set $A_{i}$ with
the $0$-simplex $[a_{i}]$.


\subsection{Dimension, simplicial mappings and irreducibility} \label{dimension}

Given $\alpha$ in $\mathcal{C}_{V}$, for every $k$ we denote by
$\lvert  \triangle_{k}(\alpha)     \rvert$ the cardinality of $\triangle_{k}(\alpha)$,
i.e. the number of $k$-simplices in $K(\alpha)$. By those means
one introduces the \textbf{dimension} of $K(\alpha)$, denoted by
$\text{dim} K(\alpha)$, as the maximal $k$ for which $\lvert  \triangle_{k}(\alpha)     \rvert$
is different from zero.

For $\alpha$ and $\beta$ in $\mathcal{C}_{V}$ with
$\alpha \succ \beta$ there exists  a simplicial map from
$K(\alpha)$ to $K(\beta)$, say $T_{\beta}^{\alpha} : K(\alpha) \to K(\beta)$,
defined up to homotopy, satisfying the following properties:

\begin{enumerate}
\item If $A_{i} \subseteq B_{j},$ then $T_{\beta}^{\alpha}[a_{i}] = [b_{j}] $.

\item Whenever $k > 0$ and $\sigma$ is in $\triangle_{k}(\alpha)$,
then the image of $\sigma$ under $T_{\beta}^{\alpha}$ is completely determined
by the image of the $0$-simplices making up $\sigma$: this allows
the possibility that $T_{\beta}^{\alpha} \sigma$ is in $\triangle_{l}(\beta)$
for some $l \leq k$ (for example when different vertices of $\sigma$ are
mapped to the same $0$-simplex in $K(\beta)$).
\end{enumerate}

One says that $T_{\beta}^{\alpha}$ is \textbf{compatible} with $\succ$.
It is important to note:

\begin{enumerate}
\item Such a map need not be unique.
\item If $\alpha \succ \beta \succ \gamma$ and we have constructed
two simplicial maps $T_{\beta}^{\alpha} : K(\alpha) \to K(\beta)$ and
 $T_{\gamma}^{\beta} : K(\beta) \to K(\gamma)$ compatible
 with $\succ$, then we have a simplicial map
 $T_{\gamma}^{\alpha} : K(\alpha) \to K(\gamma)$
 given by $T_{\gamma}^{\alpha} = T_{\gamma}^{\beta} \cdot T_{\beta}^{\alpha} $
 that is also compatible with $\succ$.
\end{enumerate}


There are open covers we  distinguish for later purposes.

\begin{definition}      \label{irreducible}
One says that $\alpha$ in $\mathcal{C}_{V}$  is \textbf{irreducible}
if no open refinement of $\alpha$ has a nerve isomorphic with
a proper sub-complex of $K(\alpha)$, i.e. if there is no $\beta$
finer than $\alpha$ that admits a strict simplicial embedding from
its nerve to the nerve of $\alpha$.
\end{definition}

\begin{lemma}  \label{irred-prop} Irreducible covers have the following properties:

\begin{enumerate}
\item  \label{essential} If $\alpha$ is irreducible then every member of it contains
a point in $V$ that is not contained in  other  member.

\item \label{surjective} If $\alpha$ is irreducible then whenever $\beta \succ \alpha$
all the simplicial maps from $K(\beta)$ to $K(\alpha)$ compatible
with $\succ$ are surjective.

\item \label{cofinal}If $V$ is compact, then every $\alpha$ in $\mathcal{C}_{V}$
has an irreducible refinement (one says that irreducible covers are
cofinal in $(\mathcal{C}_{V} , \succ))$.

\item \label{manifold} If $V$ is a manifold whose (real) dimension is
equal to $n$, then for every irreducible $\alpha$ in $\mathcal{C}_{V}$ one has
$\text{dim}\ K(\alpha) \leq n$.

\end{enumerate}
\end{lemma}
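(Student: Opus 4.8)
For parts (\ref{essential}) and (\ref{cofinal}) I would argue by elementary combinatorial means. For the \emph{essential} property (\ref{essential}), I would use contraposition: if some member $A_{i_{0}}$ of $\alpha$ has all of its points covered by the remaining members, then $A_{i_{0}} \subseteq \bigcup_{i \neq i_{0}} A_{i}$, so the subcover $\alpha' := \alpha \setminus \{A_{i_{0}}\}$ is still an open cover. Each member of $\alpha'$ lies in itself, so $\alpha' \succ \alpha$, and $K(\alpha')$ is exactly the full subcomplex of $K(\alpha)$ spanned by the vertices $I \setminus \{i_{0}\}$, a proper subcomplex. The inclusion $K(\alpha') \hookrightarrow K(\alpha)$ is then a strict simplicial embedding, contradicting irreducibility. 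For cofinality (\ref{cofinal}), I would run a descent argument on the complexity measure $\sum_{k} \lvert \triangle_{k}(\cdot) \rvert$, finite since $V$ is compact. If $\alpha$ is not irreducible, the definition furnishes a refinement whose nerve embeds strictly into $K(\alpha)$; being strict, the image is a proper subcomplex, so the refinement has strictly fewer simplices. Iterating produces a strictly decreasing sequence of non-negative integers, which terminates at an irreducible cover that refines $\alpha$ by transitivity of $\succ$.

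The crux is (\ref{surjective}). Suppose $\beta \succ \alpha$ and that some compatible $T = T_{\alpha}^{\beta} : K(\beta) \to K(\alpha)$ is not surjective, with image $L := T(K(\beta)) \subsetneq K(\alpha)$. The plan is to \emph{push $\beta$ forward} along $T$: writing $T[b_{j}] = [a_{\phi(j)}]$ (so $B_{j} \subseteq A_{\phi(j)}$ by compatibility), I would set $A'_{i} := \bigcup_{\phi(j)=i} B_{j}$ for $i \in \operatorname{im}\phi$. Then $\beta' := \{A'_{i}\}$ is a finite open cover with $A'_{i} \subseteq A_{i}$, hence $\beta' \succ \alpha$. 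The key computation is that a set of distinct indices $\{i_{0},\dots,i_{k}\}$ spans a simplex of $K(\beta')$ exactly when $\{[a_{i_{0}}],\dots,[a_{i_{k}}]\}$ spans a simplex of $L$: one direction chooses representatives in the fibres of $\phi$ sharing a common point, the other projects a simplex of $K(\beta)$ onto its distinct-image face. This produces an isomorphism $K(\beta') \cong L$, realized by $i \mapsto [a_{i}]$ as a strict simplicial embedding into $K(\alpha)$, contradicting irreducibility.

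Finally, (\ref{manifold}) combines (\ref{surjective}) with dimension theory. I would use the identity $\dim K(\alpha) = \operatorname{ord}(\alpha) - 1$, where $\operatorname{ord}(\alpha)$ is the largest number of members of $\alpha$ with a common point, together with the fact that an $n$-manifold has covering dimension $n$, so every open cover admits an open refinement $\beta$ of order $\leq n+1$, i.e. with $\dim K(\beta) \leq n$. If $\dim K(\alpha) \geq n+1$, then the compatible map $T_{\alpha}^{\beta} : K(\beta) \to K(\alpha)$ guaranteed earlier cannot be surjective, since a simplicial map sends each $k$-simplex to a simplex of dimension $\leq k$, forcing $\dim L \leq \dim K(\beta) \leq n < \dim K(\alpha)$; this contradicts (\ref{surjective}). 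Hence $\dim K(\alpha) \leq n$.

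The main obstacle is (\ref{surjective}): in the non-surjective case $T$ may collapse vertices and simplices, so one cannot read off an embedding of $K(\beta)$ directly, and the pushforward $\beta'$ is designed precisely to repackage the image as a genuine nerve; verifying that $K(\beta')$ is isomorphic to $L$ — and is not strictly larger — is where the argument must be handled with care. Part (\ref{manifold}) then rests on the standard but nontrivial input that the covering dimension of an $n$-manifold equals $n$.
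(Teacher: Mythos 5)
Your proposal is correct, and it is substantially more rigorous than the paper's own proof, which is essentially identical to yours only on parts (\ref{essential}) and (\ref{cofinal}). For (\ref{essential}) you argue exactly as the paper does (drop the redundant member, observe the nerve of the remaining cover is a proper subcomplex); for (\ref{cofinal}) the paper builds the same decreasing chain of reducible covers but justifies termination by an appeal to ``compacity of $V$'', whereas your integer descent on $\sum_{k}\lvert\triangle_{k}(\cdot)\rvert$ pinpoints the actual reason (finiteness of the nerve forces the strictly decreasing simplex count to stop). The real divergence is in (\ref{surjective}) and (\ref{manifold}). For (\ref{surjective}) the paper says only ``this is clear from the definition'' --- but it is not: a non-surjective compatible map $T$ may collapse vertices, so its image being a proper subcomplex $L$ does not directly exhibit $K(\beta)$ as isomorphic to a proper subcomplex of $K(\alpha)$, which is what Definition \ref{irreducible} requires. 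Your pushforward cover $\beta'$ with $A'_{i} = \bigcup_{\phi(j)=i} B_{j}$ is precisely the missing bridge, and your two-way verification that $K(\beta')\cong L$ (choosing representatives in the fibres of $\phi$ one way, projecting a preimage simplex the other way) is sound; this supplies a proof where the paper has none. For (\ref{manifold}) the paper merely reduces to a chart homeomorphic to $\R^{n}$ and declares the statement ``obviously true'' there --- a reduction whose legitimacy is itself unclear, since irreducibility is a global property of the cover. Your route through dimension theory (an $n$-manifold has covering dimension $n$, hence admits a refinement $\beta$ of order $\leq n+1$; simplicial maps do not raise dimension; so a compatible $T:K(\beta)\to K(\alpha)$ would be non-surjective if $\dim K(\alpha)>n$, contradicting (\ref{surjective})) is a genuinely different and cleaner argument, and its one external input is covered by the paper's own reference to Hurewicz--Wallman. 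In short: the paper's version is shorter but leans on two unproved claims; yours is self-contained at the modest cost of importing standard covering-dimension theory.
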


\begin{proof}
\ref{essential}: If $\alpha = \{\  A_{i}\ \vert\ i \in I \   \}$ has some
element, say $A_{j}$,  that  contains no point that is not contained
in the rest of the $A_{i}$'s, then $\alpha' = \{\  A_{i}\ \vert\ i \in I  \smallsetminus \{  j \}\   \}$
is a refinement of $\alpha$ and $K(\alpha')$ is a proper subcomplex
of $K(\alpha)$.

\bigskip

\ref{surjective}: This is clear from the definition.

\bigskip

 \ref{cofinal}: Let $\alpha$ in $\mathcal{C}_{V}$ be given, and consider a sequence
 $\{ \alpha_{n} \}_{ n \in \N }$ in $\mathcal{C}_{V}$ so that
 $\alpha_{0} \prec \alpha_{1} \prec \alpha_{2} \prec ...  $, where all the
 $\alpha_{n}$'s are reducible, and such that the corresponding nerves
 form a sequence of complexes, with $K(\alpha_{n+1})$ being
 a proper subcomplex of $K(\alpha_{n}),$ where $\alpha_{0} = \alpha$.
 By compacity of $V$ the sequence must stop, and the last term is
 an irreducible refinement of $\alpha$.

 \bigskip

 \ref{manifold}: Being $V$ a manifold of dimension $n$, it suffices to
 prove the result on some open set homeomorphic to $\R^{n}$, where the
 statement is obviously true.
 \end{proof}


\subsection{Chain complexes, homology } \label{multi-index}

To handle a better notation, given an open cover $\alpha = \{\ A_{i}\ \vert\ i \in I\   \}$,
we denote by $\wedge_{k}(\alpha)$ the
set of injective mappings $\vec{i} : \{ 0,...,k \} \to I^{k+1}$ such
that $\cap_{r=0}^{k} A_{i(r)} \neq \emptyset$, modulo permutations. By those means we
identify the $k$-simplex $[ a_{i(0)},..., a_{i(k)} ]$ with
$\sigma^{k}_{\vec{i}}$ whenever $\vec{i}$ is in
$\wedge_{k}(\alpha)$.  Therefore we
have a bijection between $\triangle_{k}(\alpha)$ and $\wedge_{k}(\alpha)$.

If $(G, +)$ is an Abelian group one identifies  $C_{k}(\alpha, G)$ with the (Abelian) group
of \textbf{$k$-chains} in $K(\alpha)$ with coefficients in $G$,
so that
$$C_{k}(\alpha, G) := \{\  \sum_{\vec{i} \in \wedge_{k}(\alpha)}  g_{\vec{i}}\ \sigma_{\vec{i}}^{k}\ \vert\
 g_{\vec{i}} \in G    \ \} .$$

\begin{remark}
Given a permutation of $(k+1)$ letters, say $\xi$, we are identifying $\sigma_{ \vec{i}}^{k}$
with $\sigma_{\xi \vec{i}}^{k} =  [ a_{\xi i(0)},..., a_{\xi i(k)} ]$ in $\triangle_{k}(\alpha)$,
although in $C_{k}(\alpha, G)$ we have
$\sigma_{\xi \vec{i}}^{k} = \text{sgn}(\xi)\  \sigma_{ \vec{i}}^{k}$, where $ \text{sgn}(\xi)$
denotes the sign of $\xi$.
\end{remark}


 Introduce the \textbf{boundary} operator,  denoted by
$\partial$, as the map that sends $k$-chains to $(k-1)$-chains
in a $G$-linear way. Since $C_{k}(\alpha, G)$ is generated by
the elements in $\triangle_{k}(\alpha)$, it suffices to define the action
of $\partial$ on the elements of $\triangle_{k}(\alpha)$.

Thus given $\vec{i}$ in $\wedge_{k}(\alpha)$ we set
$$\partial \sigma_{\vec{i}}^{k} := \sum_{r=0}^{k}\ (-1)^{r}\ \sigma_{\vec{i} \smallsetminus i(r)}^{k-1} ,    $$
where  $\sigma_{\vec{i} \smallsetminus i(r)}^{k-1} =  [ a_{i(0)},...,\widehat{a_{i(r)}},..., a_{i(k)} ] $
provided that $\sigma^{k}_{\vec{i}} =  [ a_{i(0)},..., a_{i(k)} ]$, where
$\widehat{a}$ means that $a$ is deleted.

One verifies that for every $c$ in  $C_{k}(\alpha, G)$ one has
$\partial^{2} c= \partial \partial c = 0$ in
$C_{k-2}(\alpha, G),$ i.e. the boundary of the boundary of
every $k$-chain is equal to zero.

\bigskip


Using the boundary operator one defines two subgroups of $C_{k}(\alpha, G)$:

\begin{enumerate}
\item The subgroup of \textbf{$k$-cycles}, denoted by $Z_{k}(\alpha , G)$,
and defined through
$$ Z_{k}(\alpha , G) :=  C_{k}(\alpha, G)\  \cap  \{\ c\ \vert\ \partial c = 0\      \}  .   $$
\item The subgroup of \textbf{$k$-boundaries}, denoted by $B_{k}(\alpha , G)$,
and defined through
$$ B_{k}(\alpha , G) :=  C_{k}(\alpha, G)\ \cap\ \partial C_{k+1}(\alpha, G) .    $$

\end{enumerate}

By those means the \textbf{$k$-th homology} group of $K(\alpha)$
with coefficients in $G$ is defined, namely
$$H_{k}(\alpha, G) := \frac{Z_{k}(\alpha , G)}{B_{k}(\alpha , G)}  .     $$

\begin{remark}
An algebraist would say that $H_{k}$ measures the \textbf{inexactness}
of the sequence
$$.... \rightarrow C_{k+1} \stackrel{\partial}{\rightarrow} C_{k}
\stackrel{\partial}{\rightarrow} C_{k-1} \rightarrow ...    $$

A geometer/topologist would say that $H_{k}$ measures the amount
of closed $k$-chains that are not filled in the space in question
(i.e. that are not boundaries) up to bordism.

\end{remark}


Let $H_{\ast}(\alpha , G) = \bigoplus_{i=0}^{\text{dim} K(\alpha)} H_{i}(\alpha , G) $  be
the graded $G$-module associated
to the homology   of $K(\alpha)$ with  coefficients in $G.$ In particular if $G$ is taken as $\R$,
one denotes by
$\mathcal{B}_{i}(\alpha) := \dim_{\R} H_{i}(\alpha , \R)$
the $i$-th Betti number of $K(\alpha)$. Regarding the structure of  the
complex  $( C_{\ast}(\alpha, \R) , \partial)$,
 one has the isomorphism $C_{i}(\alpha , \R) = Z_{i}(\alpha , \R) \bigoplus B_{i-1}(\alpha , \R)$.

 If no confussion arises we  identify $\textbf{c}_{i}(\alpha ),  \textbf{z}_{i}(\alpha )$ and
 $\textbf{b}_{i}(\alpha)$ with the real dimension of $C_{i}(\alpha , \R),  Z_{i}(\alpha , \R)$ and
 $B_{i}(\alpha , \R)$ respectively, whence in particular
 $\mathcal{B}_{i}(\alpha) = \textbf{z}_{i}(\alpha)-\textbf{b}_{i}(\alpha)$ and
 $\textbf{c}_{i}(\alpha) = \textbf{z}_{i}(\alpha) + \textbf{b}_{i-1}(\alpha)$ follow.

Using the previous nomenclature one defines
$ \chi_{t}(\alpha) := \sum_{i=0}^{\text{dim}K(\alpha)}  t^{i}   \mathcal{B}_{i}(\alpha)$,
so that  $\chi_{-1}(\alpha)$ is the \textbf{Euler-Poincar\'e characteristic}
of $K(\alpha)$.
The  equalities for $\mathcal{B}_{i}(\alpha)$ and
$\textbf{c}_{i}(\alpha)$ entail that $\chi_{-1}(\alpha)$ is equal to the sum
$\sum_{i=0}^{\text{dim}K(\alpha)}  (-1)^{i} \textbf{c}_{i}(\alpha)$.

\bigskip

 From the definitions/constructions
 one has the equality $\textbf{c}_{i}(\alpha) =  \lvert \triangle_{i}(\alpha)  \lvert  $ for every
 $i$ in $\N$, therefore:

\begin{lemma}   \label{euler-poincare}
Whenever $\alpha$ is in $\mathcal{C}_{V}$ one has the identity
$$ \chi_{-1}(\alpha) =   \sum_{i=0}^{\text{dim}K(\alpha)}  (-1)^{i}     \lvert \triangle_{i}(\alpha)  \lvert\ . $$
\end{lemma}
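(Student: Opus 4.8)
The plan is to use the two dimension identities recorded just above the statement, namely $\mathcal{B}_{i}(\alpha) = \textbf{z}_{i}(\alpha) - \textbf{b}_{i}(\alpha)$ and $\textbf{c}_{i}(\alpha) = \textbf{z}_{i}(\alpha) + \textbf{b}_{i-1}(\alpha)$, to rewrite the alternating sum of Betti numbers as an alternating sum of the chain-group dimensions, and then to invoke the combinatorial identity $\textbf{c}_{i}(\alpha) = \lvert \triangle_{i}(\alpha) \rvert$ to pass to simplex counts.

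First I would expand the definition $\chi_{-1}(\alpha) = \sum_{i=0}^{\dim K(\alpha)} (-1)^{i}\,\mathcal{B}_{i}(\alpha)$ by substituting $\mathcal{B}_{i} = \textbf{z}_{i} - \textbf{b}_{i}$, which splits it into $\sum_{i}(-1)^{i}\textbf{z}_{i}(\alpha) - \sum_{i}(-1)^{i}\textbf{b}_{i}(\alpha)$. Next I would compute the target alternating sum $\sum_{i}(-1)^{i}\textbf{c}_{i}(\alpha)$ directly from $\textbf{c}_{i} = \textbf{z}_{i} + \textbf{b}_{i-1}$. Reindexing the boundary contribution by $j = i-1$ gives $\sum_{i}(-1)^{i}\textbf{b}_{i-1}(\alpha) = -\sum_{j}(-1)^{j}\textbf{b}_{j}(\alpha)$, which matches, with the opposite sign, the boundary term in the expansion of $\chi_{-1}(\alpha)$; hence the two expressions coincide and $\chi_{-1}(\alpha) = \sum_{i}(-1)^{i}\textbf{c}_{i}(\alpha)$. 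Finally, substituting $\textbf{c}_{i}(\alpha) = \lvert \triangle_{i}(\alpha) \rvert$ yields the claimed identity.

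The step requiring attention — really only careful bookkeeping of the extreme indices — is ensuring that the reindexing introduces no spurious boundary terms. One must observe that $B_{-1}(\alpha,\R) = 0$, since there are no $(-1)$-chains, so that $\textbf{b}_{-1}(\alpha) = 0$, and that $B_{\dim K(\alpha)}(\alpha,\R) = 0$, since $C_{\dim K(\alpha)+1}(\alpha,\R) = 0$ by definition of the dimension, so that $\textbf{b}_{\dim K(\alpha)}(\alpha) = 0$. With these two endpoint values vanishing, the shifted sum over boundary dimensions telescopes cleanly against the cycle and Betti sums, which is exactly the content encoded by the stated identities $\mathcal{B}_{i} = \textbf{z}_{i} - \textbf{b}_{i}$ and $\textbf{c}_{i} = \textbf{z}_{i} + \textbf{b}_{i-1}$ arising from the splitting $C_{i}(\alpha,\R) = Z_{i}(\alpha,\R) \oplus B_{i-1}(\alpha,\R)$. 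I expect no genuine difficulty beyond this index accounting, since the underlying algebra is the classical Euler--Poincar\'e principle that the alternating sum of the dimensions of the terms of a finite chain complex equals the alternating sum of its homology dimensions.
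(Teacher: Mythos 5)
Your proposal is correct and follows essentially the same route as the paper: the paper derives the identity by noting that the equalities $\mathcal{B}_{i}(\alpha) = \textbf{z}_{i}(\alpha)-\textbf{b}_{i}(\alpha)$ and $\textbf{c}_{i}(\alpha) = \textbf{z}_{i}(\alpha) + \textbf{b}_{i-1}(\alpha)$ (coming from the splitting $C_{i}(\alpha,\R) = Z_{i}(\alpha,\R) \oplus B_{i-1}(\alpha,\R)$) force $\chi_{-1}(\alpha) = \sum_{i}(-1)^{i}\textbf{c}_{i}(\alpha)$, and then substitutes $\textbf{c}_{i}(\alpha) = \lvert \triangle_{i}(\alpha) \rvert$. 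Your write-up simply makes explicit the telescoping and the endpoint vanishing $\textbf{b}_{-1}(\alpha)=\textbf{b}_{\dim K(\alpha)}(\alpha)=0$ that the paper leaves implicit.
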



\subsection{Complexity functions} \label{complexity-irred}

In this Section we define  complexity functions for the  simplices
of an open cover on $V.$ We infer some properties of their minimizers and 
some estimates for them. The next observation is fundamental.

 \begin{lemma}    \label{irred-cuant}
For every $k$ in $\N$ and  $\alpha$ in $\mathcal{C}_{V}$ the minimum of
$\lvert \triangle_{k}(\beta) \lvert$
among those $\beta$'s finer than $\alpha$ is obtained for  irreducible $\beta$'s.  In particular,
if $\alpha$ is irreducible, then the minimum mentioned above is obtained for $\alpha$ itself.
The same is true for the sum   $\sum_{i=0}^{k} \lvert \triangle_{i}(\beta) \lvert$
and for the dimension $\text{dim} K(\beta)$.
\end{lemma}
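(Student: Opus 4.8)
The plan is to handle all three quantities --- $\lvert \triangle_{k}(\beta) \rvert$, the partial sum $\sum_{i=0}^{k} \lvert \triangle_{i}(\beta) \rvert$, and $\dim K(\beta)$ --- by a single mechanism, resting on two observations. First, each of them is monotone non-increasing when one passes from $\beta$ to a refinement $\beta'$ whose nerve is (isomorphic to) a proper subcomplex of $K(\beta)$: a strict simplicial embedding $K(\beta') \hookrightarrow K(\beta)$ preserves the dimension of every simplex and is injective, so $\lvert \triangle_{i}(\beta') \rvert \le \lvert \triangle_{i}(\beta) \rvert$ for each $i$, and summing over $i \le k$, respectively reading off the top nonempty degree, yields the three inequalities at once. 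Second, the total count $\sum_{i} \lvert \triangle_{i}(\beta) \rvert$ is a non-negative integer that strictly drops under such a reduction --- the very finiteness invoked in the proof of Lemma~\ref{irred-prop}(\ref{cofinal}).

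Granting these, the first assertion follows by the reduction procedure. Fix $k$ and let $m := \min_{\beta \succ \alpha} \lvert \triangle_{k}(\beta) \rvert$, which exists as the values are non-negative integers, and pick a minimizer $\beta$. If $\beta$ is reducible, Definition~\ref{irreducible} supplies $\beta' \succ \beta \succ \alpha$ with $K(\beta')$ isomorphic to a proper subcomplex of $K(\beta)$; by monotonicity $\lvert \triangle_{k}(\beta') \rvert \le m$, hence $= m$, so $\beta'$ is again a minimizer while its total simplex count has strictly decreased. Iterating yields a descending chain that must terminate by the finiteness remark, and its last term is an irreducible minimizer. Replacing $\lvert \triangle_{k} \rvert$ throughout by the partial sum, or by $\dim K$, settles the remaining two cases verbatim.

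The central point is the ``in particular'' clause, that an irreducible $\alpha$ is itself a minimizer, i.e.\ $\lvert \triangle_{k}(\alpha) \rvert \le \lvert \triangle_{k}(\beta) \rvert$ for all $\beta \succ \alpha$. Here I would argue directly, by constructing an injection $\triangle_{k}(\alpha) \hookrightarrow \triangle_{k}(\beta)$. Fix a compatible simplicial map $T = T_{\alpha}^{\beta} : K(\beta) \to K(\alpha)$; by Lemma~\ref{irred-prop}(\ref{surjective}) it is surjective, so each $\tau \in \triangle_{k}(\alpha)$ is $T(\sigma)$ for some simplex $\sigma$ of $K(\beta)$. Since $T$ on $\sigma$ is determined by its vertices and $\tau$ has $k+1$ distinct vertices, I select $k+1$ vertices of $\sigma$, one in the $T$-preimage of each vertex of $\tau$; they span a $k$-face $\sigma_{\tau} \in \triangle_{k}(\beta)$ with $T(\sigma_{\tau}) = \tau$. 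The assignment $\tau \mapsto \sigma_{\tau}$ is injective, because $\tau = T(\sigma_{\tau})$ recovers $\tau$ from its image, so $\lvert \triangle_{k}(\alpha) \rvert \le \lvert \triangle_{k}(\beta) \rvert$; the partial-sum and dimension statements follow by reading this off in each degree, and reflexivity of $\succ$ (so $\alpha \succ \alpha$) then exhibits $\alpha$ as the minimizer. The main obstacle I anticipate is exactly this injection: a refinement map may collapse simplices to lower dimension, so surjectivity of $T$ cannot be used naively to compare cardinalities. The delicate step is to lift each target simplex and then cut down to a $k$-dimensional face on which $T$ restricts to a vertex-bijection, so that $T$ itself furnishes the left inverse witnessing injectivity.
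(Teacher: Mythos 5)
Your proposal is correct and follows essentially the same route as the paper: the paper's one-line proof invokes exactly the two ingredients you use, namely the proper-subcomplex reduction that terminates by finiteness (the content of Lemma~\ref{irred-prop}(\ref{cofinal})) and the surjectivity of compatible simplicial maps onto nerves of irreducible covers (Lemma~\ref{irred-prop}(\ref{surjective})). Your only addition is to make explicit the vertex-lifting injection $\triangle_{k}(\alpha) \hookrightarrow \triangle_{k}(\beta)$ that converts surjectivity into the cardinality bound---a step the paper leaves implicit, and which you rightly flag as the delicate point since simplicial maps can collapse dimension.
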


\begin{proof} Follows from  \ref{cofinal} and \ref{surjective} in Lemma \ref{irred-prop},
namely that irreducible covers are cofinal
in the directed set $(\mathcal{C}_{V} , \succ )$:  hence if $\alpha$ is irreducible then for every $\beta$
finer than $\alpha$ all the simplicial maps from the nerve of $\beta$ to the nerve of $\alpha$
compatible with  $\succ$ are surjective.
\end{proof}

To quantify the complexity of $K(\alpha)$, that we measure in
terms of its dimension and its number of
simplices,  also by  similar quantities in $K(\beta)$
whenever $\beta$ is finer than $\alpha$,  we introduce
the functions $\text{Dim} K(\cdot),\ \mathcal{G}_{k}(\cdot)$ and
$S_{k}(\cdot)$  from $\mathcal{C}_{V}$ to $\mathbb{N}$ through:
$$ \text{Dim} K(\alpha) := \min_{\beta \succ \alpha} \text{dim} K(\beta)  ,      $$
$$\mathcal{G}_{k}(\alpha) := \ \sum_{i=0}^{k}  \lvert \triangle_{i}(\alpha)  \lvert\ $$
and
$$S_{k}(\alpha) := \min_{\beta \succ \alpha} \mathcal{G}_{k}(\beta) .$$

From  the definitions, Lemma \ref{irred-cuant}, and the identity in
Lemma \ref{euler-poincare} we observe:


\begin{lemma}  \label{Gk}
For every $\alpha$ in $\mathcal{C}_{V}$ we have
\begin{enumerate}

\item If $k$ is larger  than zero 
$$\mathcal{G}_{0}(\alpha)\ \leq\ \mathcal{G}_{k-1}(\alpha)\ \leq\  
\mathcal{G}_{k}(\alpha)\  ,$$ 
$$ \max_{l \in \{0,...,k\}}\ \lvert \triangle_{l}(\alpha) \rvert\ \leq\ 
\mathcal{G}_{k}(\alpha)\ \leq\ (k+1) \max_{l \in \{0,...,k\}}\ \lvert \triangle_{l}(\alpha) \rvert ,   $$
with 
$$\lvert \triangle_{l}(\alpha) \rvert\ \leq\ \binom{\mathcal{G}_{0}(\alpha)}{l+1}\ \leq\ 
\frac{\mathcal{G}_{0}(\alpha)^{l+1}}{(l+1)!} .$$

\item $\text{Dim} K(\alpha)$ is equal to $\text{dim} K(\beta)$ for some
irreducible $\beta$ finer than $\alpha$. 

\item  $S_{k}(\alpha)$ is equal to
$\mathcal{G}_{k}(\beta)$ for some irreducible $\beta$ finer than $\alpha$.

 \item  The identity
 $$ \chi_{-1}(\alpha) = 2\ \sum_{i=0}^{\text{dim}K(\alpha) -1} (-1)^{i}\
 \mathcal{G}_{i}(\alpha)
 + (-1)^{\text{dim}K(\alpha)}\  \mathcal{G}_{\text{dim} K(\alpha)}(\alpha) .   $$
\end{enumerate}
\end{lemma}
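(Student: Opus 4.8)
The plan is to treat the four assertions separately, since three of them reduce almost immediately to facts already in hand and only the last requires a short computation.

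For the first assertion I would argue purely from the definition $\mathcal{G}_{k}(\alpha)=\sum_{i=0}^{k}\lvert\triangle_{i}(\alpha)\rvert$ as a partial sum of nonnegative integers. The monotonicity $\mathcal{G}_{0}(\alpha)\leq\mathcal{G}_{k-1}(\alpha)\leq\mathcal{G}_{k}(\alpha)$ is then immediate, since passing from $\mathcal{G}_{k-1}$ to $\mathcal{G}_{k}$ only adds the nonnegative term $\lvert\triangle_{k}(\alpha)\rvert$. The two-sided bound by $\max_{l}\lvert\triangle_{l}(\alpha)\rvert$ follows because a sum of $k+1$ nonnegative terms is at least any single one of them and at most $k+1$ times the largest. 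For the combinatorial estimate I would use that $\mathcal{G}_{0}(\alpha)=\lvert\triangle_{0}(\alpha)\rvert$ is exactly the number of vertices of $K(\alpha)$, that under the identification of $\triangle_{l}(\alpha)$ with $\wedge_{l}(\alpha)$ an $l$-simplex is determined by a set of $l+1$ distinct vertices (modulo permutations), and hence that $\lvert\triangle_{l}(\alpha)\rvert$ cannot exceed the number $\binom{\mathcal{G}_{0}(\alpha)}{l+1}$ of all such subsets; the final inequality is the standard estimate $\binom{n}{m}\leq n^{m}/m!$ applied with $n=\mathcal{G}_{0}(\alpha)$ and $m=l+1$.

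The second and third assertions I would obtain directly from Lemma \ref{irred-cuant}. Since $\text{Dim}\,K(\alpha):=\min_{\beta\succ\alpha}\text{dim}\,K(\beta)$ and that lemma guarantees the minimum of $\text{dim}\,K(\beta)$ over refinements is attained at an irreducible $\beta$, the value $\text{Dim}\,K(\alpha)$ equals $\text{dim}\,K(\beta)$ for such a cover. Identically, $S_{k}(\alpha):=\min_{\beta\succ\alpha}\mathcal{G}_{k}(\beta)$, and because Lemma \ref{irred-cuant} also asserts that the minimum of $\sum_{i=0}^{k}\lvert\triangle_{i}(\beta)\rvert=\mathcal{G}_{k}(\beta)$ over refinements is realized by an irreducible cover, we conclude $S_{k}(\alpha)=\mathcal{G}_{k}(\beta)$ for some irreducible $\beta$ finer than $\alpha$. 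No further work is needed once the cofinality and surjectivity input of Lemma \ref{irred-cuant} is invoked.

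For the fourth assertion I would start from the identity of Lemma \ref{euler-poincare}, writing $n=\text{dim}\,K(\alpha)$ and $c_{i}=\lvert\triangle_{i}(\alpha)\rvert$, so that $\chi_{-1}(\alpha)=\sum_{i=0}^{n}(-1)^{i}c_{i}$. The plan is then to express each $c_{i}$ through the partial sums via $c_{i}=\mathcal{G}_{i}(\alpha)-\mathcal{G}_{i-1}(\alpha)$ with the convention $\mathcal{G}_{-1}(\alpha)=0$, and to perform an Abel summation: after re-indexing the telescoped sum $\sum_{i=0}^{n}(-1)^{i}\bigl(\mathcal{G}_{i}(\alpha)-\mathcal{G}_{i-1}(\alpha)\bigr)$ one collects the coefficient of each $\mathcal{G}_{i}(\alpha)$, obtaining $2(-1)^{i}$ for $0\leq i\leq n-1$ and $(-1)^{n}$ for $i=n$, which is precisely the claimed expression. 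This summation-by-parts bookkeeping, in particular keeping the shifted index and the boundary term $(-1)^{n}\mathcal{G}_{n}(\alpha)$ separate, is the only place where a sign slip could occur and is therefore the main (though still elementary) obstacle; the other three parts are immediate from the definitions and from Lemmas \ref{irred-cuant} and \ref{euler-poincare}.
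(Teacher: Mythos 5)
Your proposal is correct and follows exactly the route the paper intends: part 1 from the definition of $\mathcal{G}_{k}$ together with the identification of $l$-simplices with $(l+1)$-subsets of vertices, parts 2 and 3 as direct consequences of Lemma \ref{irred-cuant}, and part 4 by writing $\lvert\triangle_{i}(\alpha)\rvert=\mathcal{G}_{i}(\alpha)-\mathcal{G}_{i-1}(\alpha)$ and telescoping the alternating sum of Lemma \ref{euler-poincare}. The paper merely cites these ingredients without spelling out the computation, and your Abel-summation bookkeeping (with the correct boundary term $(-1)^{\dim K(\alpha)}\mathcal{G}_{\dim K(\alpha)}(\alpha)$) fills in the details accurately.
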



\subsection{Euclidean realization of nerves}     \label{realization}

Let $\alpha = \{\ A_{i}\ \vert\ i \in I\ \}$ be an open cover for $V$. We say that a 
partition of unity for $V$ is compatible with  $\alpha$ if it satisfies the
following conditions:

\begin{enumerate}
\item $\sum_{i \in I} x_{i}(v) = 1$ for every $v$ in $V$.
\item For every $i$ in $I$ we have that $x_{i}(v) = 0$ whenever  $v$ is not in $A_{i}$.
\end{enumerate}

Identify the $0$-simplex  $[a_{i}]$  of $K(\alpha)$ corresponding to
$A_{i}$ with the unit vector in $\R^{\lvert I \rvert}$
along the $i$-th direction, to denote the image of the map
$$ x : V \longrightarrow \R^{\lvert I \vert}   $$
$$ \hspace{3cm} v \mapsto x(v) = \sum_{i \in I}\ x_{i}(v) [a_{i}]   $$
by $\lvert K(\alpha) \rvert$, and call it an \textbf{Euclidean realization} of $K(\alpha)$.
Observe that different partitions of unit on $V$ compatible with $\alpha$ induce 
maps from $V$ to $\R^{\lvert I \rvert}$ that are homotopic. 

\bigskip

Sometimes we identify $\lvert K(\alpha) \rvert$ with a polyhedral current 
in $\R^{\lvert I \rvert}$. We do this as follows: for every $i$ in $I$ 
we have the $0$-current 
$[ a_{i} ]$ that corresponds to the pure point measure supported
at distance one from the origin along the $i$-th axis. Using the convention
of  Section \ref{multi-index}, for $\vec{i}$  in $\wedge_{k}(\alpha)$
we identify $\sigma^{k}_{\vec{i}}$ with the polyhedral $k$-current 
$\lVert \sigma^{k}_{\vec{i}} \rVert \wedge \overrightarrow{\sigma^{k}_{\vec{i}}}$,
where $\lVert \sigma^{k}_{\vec{i}} \rVert = \mathcal{H}^{k} \llcorner 1_{\text{spt} \sigma^{k}_{\vec{i}}}$
is the $k$-dimensional Hausdorff measure on $\R^{\lvert I \rvert}$ whose support 
is the convex hull of $\{ [a_{i(0)}] ,..., [a_{i(k)}] \}$, meanwhile $\overrightarrow{\sigma^{k}_{\vec{i}}}$
is a $k$-vectorfield  of unit length tangent to such a plane (see \cite{federer} for all the details).

\bigskip

Observe that the chain complex associated to
$\lvert K(\alpha) \rvert$ is isomorphic with that 
defined in Section \ref{multi-index} for $K(\alpha)$.


\subsection{Sequences of nerves: convergence} \label{ponomarev}

The results in this Section are a simplified version, suitable for the applications in this
work, of general results attributed to P. Alexandrov, S. Lefschetz and V. Ponomarev (see
\cite{Alexandroff}-\cite{Lefschetz}). In the literature the nomenclature  
is not uniform: we try to unify some notions as well.

Consider  a sequence $\{ \alpha_{n} \}_{n \in \mathbb{N}}$ in $\mathcal{C}_{V}$ with
$\alpha_{n+1}  \succ \alpha_{n}$.
If  $K(\alpha)$ is identified with  the simplicial complex that corresponds to the nerve of $\alpha$,
then  for every $n$ we have a  simplicial map $ T_{n} : K(\alpha_{n+1})   \to K(\alpha_{n}) $
compatible with $\succ$ and defined up to homotopy (see Section \ref{dimension}). Those simplicial 
maps can be composed  inductively to get 
a  map $ T^{m}_{n}$ from   $K(\alpha_{m})$  to
$K(\alpha_{n})$ whenever $m > n$
in the usual way, where $T^{m}_{n} := T_{n} \cdot T_{n+1} \cdot \cdot  \cdot\  T_{m-1}$.

We have an infinite sequence of simplicial complexes and mappings making up a
directed  set
$( K(\alpha_{n}) , T_{n}  )_{n \in \mathbb{N}}$.

One says that the sequence $( K(\alpha_{n}) , T_{n}  )_{n \in \mathbb{N}}$ is \textbf{convergent}
if every member of $\alpha_{n}$ consists  at most  of a point when $n$ goes to infinity.

As $m$ tends to infinity we have a surjective simplicial map from $K(\alpha_{m})$ to $K(\alpha_{n})$
for every $n$. We naturally identify the  inverse or projective limit of
the directed set  $( K(\alpha_{n}) , T_{n}  )_{n \in \mathbb{N}}$ with the nerve of $\alpha_{n}$ when 
$n$ tends to infinity, that we
denote by
$$    \lim_{ \leftarrow }   ( K(\alpha_{n}) , T_{n}  )  ,$$
to state:

\begin{proposition} (Alexandrov-Ponomarev \cite{Alexandroff}) \label{nerve-thread}
Assume that the sequence $$( K(\alpha_{n}) , T_{n}  )_{n \in \mathbb{N}}$$ is convergent.
Then when $n$ goes to infinity the nerve of $\alpha_{n}$ and  $V$
are homeomorphic.
\end{proposition}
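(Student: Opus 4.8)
The plan is to construct an explicit homeomorphism between $V$ and the inverse limit $\varprojlim(K(\alpha_n),T_n)$, exploiting the convergence hypothesis that the members of $\alpha_n$ shrink to points. First I would use the Euclidean realizations from Section \ref{realization}: choosing for each $n$ a partition of unity $\{x_i^{(n)}\}$ compatible with $\alpha_n$, each realization map $x^{(n)} : V \to |K(\alpha_n)| \subset \R^{|I_n|}$ is continuous, and one checks that the $T_n$ are (up to homotopy) compatible with these realizations, so that the family $\{x^{(n)}\}_{n\in\N}$ assembles into a single continuous map $\Phi : V \to \varprojlim(K(\alpha_n),T_n)$ into the inverse limit, equipped with the subspace topology from $\prod_n |K(\alpha_n)|$. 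The point is that $\Phi(v) = (x^{(n)}(v))_n$ is automatically a \emph{thread} (a coherent sequence) because the $T_n$ respect the refinement maps.

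Next I would prove that $\Phi$ is injective. Given $v \neq w$ in $V$, since $V$ is Hausdorff and compact I can separate them; the convergence hypothesis forces the diameters of the members of $\alpha_n$ (with respect to any metric inducing the topology, or more intrinsically their eventual collapse to points) to become arbitrarily small, so for large $n$ no single $A_i \in \alpha_n$ contains both $v$ and $w$. This makes the supports of the barycentric coordinates of $v$ and $w$ differ, hence $x^{(n)}(v) \neq x^{(n)}(w)$ for large $n$, giving injectivity of $\Phi$. For surjectivity I would take an arbitrary thread $(\xi_n)_n$ in the inverse limit and use that each $\xi_n$ lies in a simplex of $K(\alpha_n)$, whose vertices correspond to open sets with nonempty common intersection; the compatibility conditions on the $T_n$ together with the finite intersection property for the nested closed pieces (using compactness of $V$) produce a nonempty intersection $\bigcap_n \overline{A_{i(n)}}$ that collapses, by convergence, to a single point $v$ with $\Phi(v) = (\xi_n)_n$.

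Finally I would invoke the standard topological principle that a continuous bijection from a compact space to a Hausdorff space is a homeomorphism. The inverse limit $\varprojlim(K(\alpha_n),T_n)$ is Hausdorff (being a subspace of a product of Hausdorff simplicial complexes) and $V$ is compact by hypothesis, so once $\Phi$ is shown to be a continuous bijection its inverse is automatically continuous, and the proof is complete. The continuity of $\Phi$ reduces to continuity of each coordinate $x^{(n)}$, which holds since partitions of unity are continuous.

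I expect the main obstacle to be surjectivity, and more precisely the careful bookkeeping needed to pass from a thread $(\xi_n)_n$ back to a point of $V$. The subtlety is that each $T_n$ is only defined up to homotopy and need not be unique (as emphasized after the definition of $T^\alpha_\beta$ in Section \ref{dimension}), so I must either fix a coherent choice of representatives once and for all or else argue that the inverse limit is insensitive to the homotopy ambiguity. The cleanest route is to work with the closures $\overline{A_{i(n)}}$ and extract the point $v$ as the unique element of a nested intersection of compact sets whose diameters tend to zero; verifying that this intersection is nonempty (finite intersection property plus compactness) and singleton (convergence hypothesis) is where the real content lies, and aligning the resulting $v$ with the prescribed thread requires checking that the barycentric coordinates genuinely recover $\xi_n$ rather than merely some simplex containing it.
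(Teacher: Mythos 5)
Your construction breaks down at the very first step, before injectivity or surjectivity ever come into play: the map $\Phi(v) = (x^{(n)}(v))_{n}$ does not land in the inverse limit, because the sequence $(x^{(n)}(v))_{n}$ is in general \emph{not} a thread. Coherence would require the exact identity $\lvert T_{n} \rvert \circ x^{(n+1)} = x^{(n)}$, whereas what is actually true is only that $\lvert T_{n} \rvert (x^{(n+1)}(v))$ and $x^{(n)}(v)$ lie in a common simplex of $K(\alpha_{n})$, namely the one spanned by the members of $\alpha_{n}$ containing $v$: the two points are contiguous, hence the maps are homotopic, but they are not equal. Concretely, $\lvert T_{n} \rvert (x^{(n+1)}(v))$ has barycentric coordinates $\sum_{i :\, T_{n}(i)=j} x^{(n+1)}_{i}(v)$, while $x^{(n)}(v)$ has coordinates $x^{(n)}_{j}(v)$ coming from an independently chosen partition of unity; nothing forces these to agree. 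Your parenthetical ``(up to homotopy)'' is precisely the problem: an inverse limit in the category of topological spaces requires the bonding identities to hold on the nose, and up-to-homotopy compatibility does not produce a point of the limit. Repairing this, say by choosing the partitions of unity coherently across all levels simultaneously, is genuinely hard --- it is essentially the content of Freudenthal's theorem on representing compact metric spaces as inverse limits of polyhedra --- and the same defect resurfaces in your surjectivity step: even after you extract the point $v$ from the nested intersection of closures (which part of your argument is fine), the point $\Phi(v)$ will in general differ from the given thread $(\xi_{n})_{n}$, exactly the failure you flagged at the end but did not resolve.

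The paper avoids this trap by never passing to Euclidean realizations: its inverse limit is combinatorial, consisting of maximal admissible sequences of \emph{simplices} $\sigma = (\sigma_{n})_{n}$, topologized via the face relation, and a point $v$ is sent to the sequence $\sigma_{n}(v)$ of simplices spanned by \emph{all} members of $\alpha_{n}$ containing $v$. At that level there is no barycentric bookkeeping to match, and the convergence hypothesis directly yields that $v \mapsto \sigma(v)$ is a bijection onto the set of maximal threads. If you want to keep the geometric-realization route, you must either retreat to the homotopy category (losing the homeomorphism statement) or carry out the simultaneous construction of covers, bonding maps, and strictly coherent canonical maps; as written, the key object $\Phi$ of your proposal does not exist.
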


\begin{proof}  We  describe  the projective limit of the directed set
$(K(\alpha_{n})  ,   T_{n})_{n \in \mathbb{N}},$ to see that there is a homeomorphism
between such a limit and $V$.

Let $\sigma := \{  \sigma_{n} \}_{n \in \mathbb{N}}$ be a sequence of simplices, with $\sigma_{n}$
in $K(\alpha_{n})$ for every $n.$ We say that $\sigma$ is an \textbf{admissible sequence} or a
\textbf{thread} for $( K(\alpha_{n}) , T_{n}  )_{n \in \mathbb{N}}$ if $\sigma_{n} = T_{n}^{m} \sigma_{m}$ whenever
$m$ is larger  than $n$, and say that an admissible sequence  $\sigma'$ is an \textit{extension} of $\sigma$
if for every $n$ the simplex $\sigma_{n}$ is a face (not necessarily a proper one) of
$\sigma_{n}'$. If the admissible sequence $\sigma$ has no extensions other than itself,
we say that it is a \textit{maximal admissible sequence} (or a \textit{maximal thread}).

Provide $( K(\alpha_{n}) , T_{n}  )_{n \in \mathbb{N}}$   with the following topology:
given a simplex $\sigma_{n}$ in $K(\alpha_{n})$ for some $n$, a basic open set around
$\sigma_{n}$ consists of all maximal admissible sequences $\sigma'$ such that
$\sigma_{n}'$ is a face of $\sigma_{n}$. In such a way one generates a topology
for the limit space, namely  the set of all maximal admissible
sequences.

Whenever $v$ is a point in  $V$ we have a simplex $\sigma_{n}(v)$ in $K(\alpha_{n})$ that
corresponds to all the open sets in $\alpha_{n}$ to which $v$
belongs; due to the  convergence assumption we note that   
$\sigma(v) = \{  \sigma_{n}(v) \}_{n  \in  \mathbb{N}}$ is a
maximal admissible sequence, and conversely, every maximal admissible sequence
in  $( K(\alpha_{n}) , T_{n}  )_{n \in \mathbb{N}}$ is of the form $\sigma(v)$
for some $v$ in $V$.

Therefore $V$ is isomorphic to the inverse limit of 
$( K(\alpha_{n}) , T_{n}  )_{n \in \mathbb{N}}$, and at this stage it is easy to see that they are homeomorphic.           
\end{proof}

\begin{remark}
Neither a metric nor a differentiable structure on $V$ are required in Proposition \ref{nerve-thread}.
\end{remark}


\begin{remark}
Proposition \ref{nerve-thread} can be refined sometimes: it might happen
that for some finite $n$  all the elements in $\alpha_{n}$ together with their
intersections are contractible (see Figure 1 in Section \ref{geometrization}). Then $\alpha_{n}$ 
is said to be a
\textit{`good cover'}, and it is  known that in such a case $K(\alpha_{n})$ is homotopically 
equivalent to $V$ (see \cite{Hatcher} for example).  
\end{remark}

One is led to consider convergent sequences of coverings to reconstruct and/or approximate 
a given space up to homeomorphism in the limit. On every paracompact 
Hausdorff space a convergent sequence can be 
constructed in an arbitrary way. It is of interest, however, to create them under
some quantitative and qualitative  control. We will see in Section \ref{control-convergence}
that the family of complexity functions introduced in Section \ref{complexity-irred}
are of much use for those purposes. Moreover, if we endow $V$ with a Riemannian metric, one can consider 
subsequences of complexes associated to those complexity functions, and  have a better approximation
of $V$ in the limit (Section \ref{metric}).


\subsection{Controlling  sequences} \label{control-convergence}

Let $( K(\alpha_{n}) , T_{n}  )_{n \in \mathbb{N}}$ be a  sequence   of nerves 
and simplicial mappings built up  from a sequence 
$\{ \alpha_{n} \}_{n \in \N}$ of open covers for $V$, with  $\alpha_{n+1}  \succ \alpha_{n}$.
From Lemma \ref{Gk} we know
that if we consider the sequence $\{ S_{k}(\alpha_{n}) \}_{n \in \mathbb{N}}$
of positive integers
there exists, for every $n$ in $\mathbb{N}$, at least one irreducible
$\beta_{k,n}$ finer than $\alpha_{n}$ so that $S_{k}(\alpha_{n})$ is
equal to $\mathcal{G}_{k}(\beta_{k,n})$.

Fix $k$ and let 
$\{ \beta_{k,n} \}_{ n \in \mathbb{N} }$ be a sequence
of irreducible covers that achieve, for each $n$ in $\mathbb{N}$, the minimum
of $S_{k}(\alpha_{n})$. Since $\beta_{k,n}$ is finer than $\alpha_{n}$,
then when $n$ goes to infinity we have, under the hypothesis of Proposition \ref{nerve-thread}, that
$K(\beta_{k,n})$ is homeomorphic to $V$; since $\beta_{k,n}$
is irreducible the dimension of $K(\beta_{k,n})$ is equal to the dimension
of $V$.

For every $i \in \{ 0,...,\text{dim} V \}$ consider the increasing sequence of positive integers
$\{ \mathcal{G}_{i}(\beta_{k,n})\}_{ n \in \mathbb{N}}$; each sequence goes to
infinity as $n$ increases. The next Proposition provides a correlation between those
sequences thanks to Lemma \ref{Gk}.

\begin{proposition}   \label{euler-convergent}
Let $V$ be a compact Hausdorff space, without a boundary, whose topological dimension
is uniform and finite.
For a fixed $k$ let $\{ \beta_{k,n} \}_{n \in \mathbb{N}}$ be a
sequence of irreducible open covers associated to a convergent
sequence $( K(\alpha_{n}), T_{n} )_{n \in \mathbb{N}}$. 
Then as 
$n$ goes to infinity we have the equality
$$ \chi_{-1}(V) = 2\ \sum_{i=0}^{\text{dim} V -1} (-1)^{i}\
 \mathcal{G}_{i}(\beta_{k,n})
+ (-1)^{\text{dim} V}\  \mathcal{G}_{\text{dim} V}(\beta_{k,n}) .   $$
\end{proposition}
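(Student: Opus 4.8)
The plan is to obtain the statement from the single-cover identity of Lemma \ref{Gk}, item 4, applied to each irreducible cover $\beta_{k,n}$, and then to pass to the limit $n \to \infty$ by means of Proposition \ref{nerve-thread}. For a fixed $n$ set $d_n := \text{dim} K(\beta_{k,n})$. Item 4 of Lemma \ref{Gk} is valid for every cover, so in particular
$$\chi_{-1}(\beta_{k,n}) = 2 \sum_{i=0}^{d_n - 1} (-1)^i \mathcal{G}_i(\beta_{k,n}) + (-1)^{d_n} \mathcal{G}_{d_n}(\beta_{k,n}) .$$
Everything then comes down to two identifications, valid for all sufficiently large $n$: that $d_n = \text{dim} V$ and that $\chi_{-1}(\beta_{k,n}) = \chi_{-1}(V)$. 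Granting these, the display above is literally the asserted equality. I would underline from the outset what is really being claimed: although each $\mathcal{G}_i(\beta_{k,n})$ diverges as $n \to \infty$, this particular alternating combination stays locked to the topological invariant $\chi_{-1}(V)$.

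First I would treat the dimension. As each $\beta_{k,n}$ is finer than $\alpha_n$, and the members of $\alpha_n$ shrink to points as $n \to \infty$, so do the members of $\beta_{k,n}$; hence the associated system of nerves is convergent in the sense of Section \ref{ponomarev}, and by Proposition \ref{nerve-thread} its limit is homeomorphic to $V$. Since the topological dimension of the realization of a finite complex coincides with its combinatorial dimension, and $\text{dim} V$ is assumed uniform and finite, the integers $d_n$ stabilize at $\text{dim} V$. Next I would treat the Euler characteristic. By Lemma \ref{euler-poincare}, $\chi_{-1}(\beta_{k,n})$ is the alternating face count of $K(\beta_{k,n})$, a homeomorphism invariant of the nerve; because the nerves converge to $V$ up to homeomorphism, the integer sequence $\{\chi_{-1}(\beta_{k,n})\}$ stabilizes at $\chi_{-1}(V)$, which is well defined since $V$ then carries the homotopy type of a finite complex. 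Substituting $d_n = \text{dim} V$ and $\chi_{-1}(\beta_{k,n}) = \chi_{-1}(V)$ into the displayed identity finishes the argument.

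The step I expect to be the main obstacle is precisely this passage to the limit, that is, the genuine stabilization of the two integers $d_n$ and $\chi_{-1}(\beta_{k,n})$. At finite stages the nerve $K(\beta_{k,n})$ need not share either the dimension or the homotopy type of $V$ --- a coarse irreducible cover can easily produce a nerve with the wrong Euler characteristic --- so neither identification is a finite-stage phenomenon, and both must be read off the inverse limit. Making this rigorous requires the continuity of (\v{C}ech) homology along the convergent system together with the identification of the limit with $V$ furnished by Proposition \ref{nerve-thread}, and it is here that the hypotheses ``without boundary'' and ``topological dimension uniform and finite'' do their work, by forcing $V$ to behave like a finite complex. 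A subsidiary technical point is that the covers $\beta_{k,n}$, being chosen independently as minimizers of $\mathcal{G}_k$ over refinements of $\alpha_n$, need not form a nested cofinal family; one should first assemble from them a convergent directed sequence before invoking Proposition \ref{nerve-thread}.
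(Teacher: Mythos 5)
Your proposal follows essentially the same route as the paper: the paper's (implicit) proof is precisely the discussion preceding the statement, namely applying item 4 of Lemma \ref{Gk} to each $\beta_{k,n}$ and then identifying $\dim K(\beta_{k,n})$ with $\dim V$ (via irreducibility, Lemma \ref{irred-prop}) and $\chi_{-1}(\beta_{k,n})$ with $\chi_{-1}(V)$ through the limit homeomorphism of Proposition \ref{nerve-thread}. Your two flagged cautions --- that both identifications are limit rather than finite-stage phenomena, and that the $\beta_{k,n}$ need not be nested so a convergent directed sequence must first be assembled --- are refinements of this same argument, the latter being a point the paper itself only addresses later, in Section \ref{metric}.
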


To have more control on a  sequence 
$\{ K(\alpha_{n}) \}_{n \in \mathbb{N}}$ 
we  consider strictly increasing sequences of positive real numbers, say $\{ c(n) \}_{n \in \mathbb{N}}$,
going to infinity  and such that
$$ 0 < \lim \inf_{n} \frac{\log S_{k}(\alpha_{n})}{c(n)} =: \text{ent}_{k}^{\downarrow}(\alpha_{n}, c(n))
 \leq  \text{ent}_{k}^{\uparrow}(\alpha_{n} , c(n)) := \lim \sup_{n} \frac{ \log S_{k}(\alpha_{n})}{c(n)}
<   \infty.$$


If the sequence  $\{ c(n)  \}_{n \in \mathbb{N}}$ satisfies those estimates, we say
that it \textbf{controls the simplicial growth} 
of  $\{ K(\alpha_{n}) \}_{n \in \mathbb{N}}$
up to dimension $k$. If  $\lim_{n} \log S_{k}(\alpha_{n}) / c(n)$ exists, then
$$ \text{ent}_{k}^{\downarrow}( \alpha_{n}, c(n)) =   
\text{ent}_{k}^{\uparrow}( \alpha_{n}, c(n)) =: 
\text{ent}_{k}( \alpha_{n}, c(n)). $$


Similarly, if 
$$ 0 < \lim \inf_{n} \frac{\text{Dim} K(\alpha_{n})}{c(n)} =: \text{Dim}^{\downarrow}(\alpha_{n}, c(n))
\leq \text{Dim}^{\uparrow}(\alpha_{n}, c(n)) := \lim \sup_{n} \frac{\text{Dim} K(\alpha_{n})}{c(n)} < \infty ,$$
we say that $\{ c(n)  \}_{n \in \mathbb{N}}$ \textbf{controls the dimension growth}
of $\{ K(\alpha_{n}) \}_{n \in \mathbb{N}}$. 

\bigskip

Of course if  $\lim_{n} \text{Dim} K(\alpha_{n}) / c(n)$ exists, then  
$$\text{Dim}^{\downarrow}(\alpha_{n}, c(n)) = \text{Dim}^{\uparrow}(\alpha_{n}, c(n)) =:
\text{Dim}(\alpha_{n},  c(n)).$$


Using Lemma \ref{Gk} we  deduce:

\begin{theorem}   \label{controlling-growth}
Assume that  $\{ c(n)  \}_{n \in \mathbb{N}}$  controls the  simplicial growth 
of $\{ K(\alpha_{n}) \}_{n \in \mathbb{N}}$ up to dimension $k$ for some finite $k$.
Then $\{ c(n)  \}_{n \in \mathbb{N}}$ is a controlling sequence for 
the growth of simplices of $\{ K(\alpha_{n}) \}_{n \in \mathbb{N}}$ 
up to dimension $k$ for every finite $k$.
\end{theorem}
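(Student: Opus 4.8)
The plan is to reduce the statement for an arbitrary dimension to the single case $k=0$, by showing that, up to a multiplicative constant depending only on the dimension, the logarithmic growth rate of $S_{k}(\alpha_{n})$ is the same for every $k$. The bridge between the dimensions will be a two-sided comparison between $S_{k}$ and $S_{0}$, extracted from the elementary estimates collected in Lemma \ref{Gk}.

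First I would establish, for every $\alpha$ in $\mathcal{C}_{V}$ and every finite $k$, the chain
\[
S_{0}(\alpha)\ \le\ S_{k}(\alpha)\ \le\ (k+1)\, S_{0}(\alpha)^{k+1}.
\]
The left inequality is immediate: if $\beta_{k}\succ\alpha$ is an irreducible cover realizing $S_{k}(\alpha)=\mathcal{G}_{k}(\beta_{k})$ (such a cover exists by Lemma \ref{Gk}), then $\mathcal{G}_{0}(\beta_{k})\le\mathcal{G}_{k}(\beta_{k})$ gives $S_{0}(\alpha)\le\mathcal{G}_{0}(\beta_{k})=S_{0}(\alpha)\le S_{k}(\alpha)$. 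For the right inequality I would instead pick an irreducible $\beta_{0}\succ\alpha$ realizing $S_{0}(\alpha)=\mathcal{G}_{0}(\beta_{0})$ and feed it into the upper bounds of Lemma \ref{Gk}: since $\mathcal{G}_{k}(\beta_{0})\le(k+1)\max_{l\le k}\lvert\triangle_{l}(\beta_{0})\rvert$ and $\lvert\triangle_{l}(\beta_{0})\rvert\le\mathcal{G}_{0}(\beta_{0})^{l+1}/(l+1)!\le\mathcal{G}_{0}(\beta_{0})^{k+1}$ once $\mathcal{G}_{0}(\beta_{0})\ge 1$, we obtain $S_{k}(\alpha)\le\mathcal{G}_{k}(\beta_{0})\le(k+1)\,S_{0}(\alpha)^{k+1}$. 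The one point to watch here is that the minimizers for $S_{0}$ and for $S_{k}$ are in general different covers, so the two inequalities must be proved with the two different minimizers; no single $\beta$ serves both directions. I expect this polynomial comparison to be the genuinely delicate step; everything afterwards is a routine passage to the limit.

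Next I would take logarithms of this chain, specialize to $\alpha=\alpha_{n}$, divide by $c(n)$, and let $n\to\infty$. Because $c(n)\to\infty$, the additive term $\log(k+1)$ disappears in the limit, so passing to $\liminf$ and $\limsup$ yields
\[
\text{ent}_{0}^{\downarrow}(\alpha_{n},c(n))\ \le\ \text{ent}_{k}^{\downarrow}(\alpha_{n},c(n))\ \le\ (k+1)\,\text{ent}_{0}^{\downarrow}(\alpha_{n},c(n)),
\]
together with the same sandwich with $\uparrow$ in place of $\downarrow$. In words, every simplicial growth rate is trapped between the $0$-dimensional rate and $(k+1)$ times it.

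Finally I would exploit the hypothesis. Let $k_{0}$ be a finite dimension for which $c(n)$ controls the simplicial growth, i.e. $0<\text{ent}_{k_{0}}^{\downarrow}\le\text{ent}_{k_{0}}^{\uparrow}<\infty$. The sandwich for $k=k_{0}$ gives $\text{ent}_{0}^{\downarrow}\ge\text{ent}_{k_{0}}^{\downarrow}/(k_{0}+1)>0$ and $\text{ent}_{0}^{\uparrow}\le\text{ent}_{k_{0}}^{\uparrow}<\infty$, hence $0<\text{ent}_{0}^{\downarrow}\le\text{ent}_{0}^{\uparrow}<\infty$; that is, $c(n)$ already controls the simplicial growth up to dimension $0$. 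Feeding this back into the sandwich for an arbitrary finite $k$ yields $0<\text{ent}_{0}^{\downarrow}\le\text{ent}_{k}^{\downarrow}$ and $\text{ent}_{k}^{\uparrow}\le(k+1)\,\text{ent}_{0}^{\uparrow}<\infty$, which is precisely the assertion that $c(n)$ controls the simplicial growth up to dimension $k$. Since $k$ was arbitrary, the theorem follows.
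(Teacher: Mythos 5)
Your proposal is correct and takes essentially the same route as the paper: both arguments use the counting bounds of Lemma \ref{Gk} to sandwich the dimension-$k$ growth rate between the dimension-$0$ rate and $(k+1)$ times it, then pass to $\liminf$/$\limsup$ and interpolate between dimensions. If anything, your write-up is more careful than the paper's, which states the comparison for $\mathcal{G}_{k}(\alpha_{n})$ and passes directly to the entropies (which are defined via $S_{k}$), whereas you supply the needed two-minimizer argument relating $S_{0}$ and $S_{k}$ explicitly --- modulo the harmless slip in your first chain, which should read $S_{0}(\alpha)\le\mathcal{G}_{0}(\beta_{k})\le\mathcal{G}_{k}(\beta_{k})=S_{k}(\alpha)$.
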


\begin{proof}
From Lemma \ref{Gk} we see that
$$\mathcal{G}_{0}(\alpha_{n}) \leq \mathcal{G}_{k}(\alpha_{n}) \leq 
(k+1)\ \max_{l \in \{ 0,...,k \}}\ \frac{\mathcal{G}_{0}(\alpha_{n})^{l+1}}{(l+1)!} , $$
therefore  we have      
$$ \text{ent}_{0}^{\downarrow}(\alpha_{n}, c(n)) \leq \text{ent}_{k}^{\downarrow}(\alpha_{n}, c(n)) \leq 
(k+1) \text{ent}_{0}^{\downarrow}(\alpha_{n}, c(n)), $$
and similarly
$$ \text{ent}_{0}^{\uparrow}(\alpha_{n}, c(n)) \leq \text{ent}_{k}^{\uparrow}(\alpha_{n}, c(n)) \leq
(k+1) \text{ent}_{0}^{\uparrow}(\alpha_{n}, c(n)) .$$

By a simple interpolation we deduce that $\text{ent}_{l}^{\downarrow}(\alpha_{n}, c(n))$ and 
$\text{ent}_{k}^{\downarrow}(\alpha_{n}, c(n))$
are comparable if both $k$ and $l$ are finite, and similarly for
$\text{ent}_{l}^{\uparrow}(\alpha_{n}, c(n))$ and $\text{ent}_{k}^{\uparrow}(\alpha_{n}, c(n))$.
\end{proof}

Natural choices for controlling sequences are:
\begin{enumerate}
\item $c(n) = n$, and then (in the strict sense)
\begin{enumerate}
\item The simplicial growth is of exponential type.
\item The dimension growth is of  linear type.
\end{enumerate}
\item $c(n) = \log n$, and  then
\begin{enumerate}
\item The simplicial growth is of  polynomial type.
\item The dimension growth is of  logarithmic type.
\end{enumerate}
\end{enumerate}

Of course  there are other possibilities.

\bigskip

Theorem \ref{controlling-growth} says that the order of the simplicial growth up to dimension $k$ 
in a sequence $( K(\alpha_{n}) , T_{n}  )_{n \in \mathbb{N}}$ is comparable to the order of growth
of  $0$-simplices if $k$ is finite or $V$ is finite dimensional.

On the other hand, 
observe that a necessary condition for $\{ K(\alpha_{n}) \}_{n \in \mathbb{N}}$ to have a  
sequence controlling  its dimension growth
is that the underlying space $V$ must have infinite  topological dimension, i.e. 
the supremum of $\text{Dim} K(\alpha)$ as $\alpha$ varies in 
$\mathcal{C}_{V}$ must be unbounded; this condition is also sufficient 
if $\{ K(\alpha_{n}) \}_{n \in \mathbb{N}}$ is convergent.

\begin{remark}   \label{standard}
We understand that $\{ n \}_{n \in \mathbb{N}}$ is the standard sequence; due to that we will omit $c(n)$
from the expressions whenever such a sequence is used.
\end{remark}


\subsection{Life with a Riemannian metric}  \label{metric}

Now $V$ is a smooth closed manifold,  provided
with a distance function arising from some Riemannian metric $g$, say $d \equiv d_{g}$. 
Then the convergence of $( K(\alpha_{n}) , T_{n}  )_{n \in \mathbb{N}}$ is equivalent to the statement
that all the members of $\alpha_{n}$ have a diameter that goes to zero as $n$
goes to infinity, where we assume that $\alpha_{n+1} \succ \alpha_{n}$
for every $n$.

As in Section \ref{control-convergence} consider, for every $k$ and $n$,
an irreducible cover $\beta_{k,n}$ finer than $\alpha_{n}$ so that $S_{k}(\alpha_{n})$ is
equal to $\mathcal{G}_{k}(\beta_{k,n})$. 
Then for each $n$ we have a simplicial embedding from the nerve of $\beta_{k,n}$
to the nerve of $\alpha_{n}$, but there is \textbf{no} guarantee that 
$\beta_{k,n+1} \succ \beta_{k,n}$, nor that the members of $\beta_{k,n}$
are contractible.

But we can do better; since the diameter of the members of  $\alpha_{n}$ are decreasing 
as $n$ increases, then for every $n$ there exists some $m$ large enough such that every member 
of the irreducible cover
$\beta_{k,n+m}$ has a diameter smaller than some Lebesgue number
of $\beta_{k,n}$, yielding a surjective simplicial map
$ T(k)^{n+m}_{n} : K(\beta_{k,n+m}) \to K(\beta_{k,n}) $.

Assume that $k$ is fixed: we have a subsequence 
$\{ \beta_{k , \phi(n)} \}_{n \in \N} \equiv \{ \beta_{\phi(n)} \}_{n \in \N}$
of  $\{ \beta_{k, n}\}_{n \in \N} \equiv   \{ \beta_{n}\}_{n \in \N} $  made up of 
irreducible covers and endowed with surjective
simplicial maps 
$T_{\phi(n)} : K(\beta_{\phi(n+1)}) \to K(\beta_{\phi(n)}) $
making up a directed set $\{ K(\beta_{\phi(n)}) , T_{\phi(n)} \}_{n \in \N}$.
We recover Proposition \ref{nerve-thread} for the projective limit
$$   \lim_{ \leftarrow }   ( K(\beta_{\phi(n)}) , T_{\phi(n)}  )  ,  $$
although with better quantitative control. This is 
due to  the results in  Section \ref{control-convergence},
and because the dimension
of $K(\beta_{\phi(n)})$ is bounded by the dimension of $V$ 
at every stage (Lemma \ref{irred-prop}).

If $\epsilon(\phi(n))$ is the largest diameter
of a member in 
$\beta_{\phi(n)} = \{\ B_{i}\ \vert\ i \in I(\phi(n))\ \}$,
we assume that $n$ is large enough so that $\epsilon(\phi(n))$ is smaller
than the injectivity radius of $(V, g)$.
Then for every $i$ in  $I(\phi(n))$ we can choose some $b_{i}$
in $B_{i}$ such that $d(b_{i}, b_{j}) \leq \epsilon(\phi(n))$ whenever $B_{i} \cap B_{j} \neq \emptyset$,
and identify $b_{i}$ with the $0$-simplex $[b_{i}]$ that corresponds to
$B_{i}$.

Let $x_{\phi(n)} : V \to \lvert K(\beta_{\phi(n)}) \rvert$ be an Euclidean realization
of $K(\beta_{\phi(n)})$ (Section \ref{realization}). Embed 
the $1$-simplices of $\lvert K(\beta_{\phi(n)}) \rvert$ in $V$ using a
Lipschitz map $y_{\phi(n)}^{1}$ so that 
the  image of the
$1$-simplex $[b_{i}, b_{j}]$ corresponds to the distance minimizing path
or geodesic between the points $b_{i}$ and $b_{j}$, that we regard
as a rectifiable path (or current) in $V$. We observe (see \cite{verde}):

\begin{lemma}
Endow the set of  
$0$-simplices in $\lvert K(\beta_{\phi(n)}) \rvert$, 
namely $\triangle_{0}(\lvert K(\beta_{\phi(n)}) \rvert)$,
with the 
distance  induced by the embedding $y_{\phi(n)}^{1}$ of the $1$-simplices in $(V, d)$,
extending it  to all $\triangle_{0}(\lvert K(\beta_{\phi(n)}) \rvert)$ in the 
natural way; denote such a distance by $d^{0}_{\phi(n)}$. Then, as $n$ goes to
infinity, the metric space 
$( \triangle_{0}(\lvert K(\beta_{\phi(n)}) \rvert) , d^{0}_{\phi(n)})$  converges to $(V,d)$  
in the Gromov-Hausdorff sense.
\end{lemma}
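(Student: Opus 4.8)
The plan is to show Gromov--Hausdorff convergence by exhibiting, for each large $n$, an explicit correspondence (or $\eps$-isometry) between the finite metric space $(\triangle_0(\lvert K(\beta_{\phi(n)}) \rvert), d^0_{\phi(n)})$ and $(V,d)$ whose distortion tends to zero. The natural correspondence pairs each $0$-simplex $[b_i]$ with its chosen representative point $b_i \in B_i$, and pairs an arbitrary $v \in V$ with any $[b_i]$ for which $v \in B_i$ (such an $i$ exists because $\beta_{\phi(n)}$ is a cover). I will verify the two defining properties of a $\delta(n)$-correspondence with $\delta(n) \to 0$: that every point of $V$ and every vertex is matched, and that the metric distortion over matched pairs is controlled.

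**First I would** set up the comparison of distances. On the vertex side, $d^0_{\phi(n)}$ is by construction the path metric induced by the geodesic embedding $y^1_{\phi(n)}$ of the $1$-skeleton; so $d^0_{\phi(n)}([b_i],[b_j])$ is the length of a shortest edge-path in the graph $\triangle_0 \cup \triangle_1$, with each edge $[b_i,b_j]$ weighted by the geodesic length $d(b_i,b_j)$. Since $\eps(\phi(n))$ is below the injectivity radius and the $b_i$ satisfy $d(b_i,b_j)\le \eps(\phi(n))$ whenever $B_i \cap B_j \neq \emptyset$, each graph edge has length at most $\eps(\phi(n))$, and the nerve condition guarantees that overlapping members give edges, so the graph is connected with fine mesh. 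The key estimate is that this induced path metric is uniformly close to the restriction of the ambient Riemannian distance $d$ to the vertex set: one has trivially $d(b_i,b_j) \le d^0_{\phi(n)}([b_i],[b_j])$, and a reverse inequality $d^0_{\phi(n)}([b_i],[b_j]) \le d(b_i,b_j) + C\,\eps(\phi(n))$ follows by approximating a minimizing $V$-geodesic between $b_i$ and $b_j$ by a chain of overlapping cover members and using the Lebesgue-number argument from Section \ref{metric} to realize consecutive jumps as graph edges.

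**The main obstacle** I expect is precisely this reverse inequality, i.e.\ controlling the path metric from above so that short geodesics in $V$ are shadowed by short edge-paths in the nerve graph. This is where the geometry of $(V,g)$ enters: one must cover a minimizing geodesic by $O(d(b_i,b_j)/\eps(\phi(n)))$ members of $\beta_{\phi(n)}$, check that consecutive members overlap (so that each contributes one edge of length $\le \eps(\phi(n))$), and sum the edge lengths, incurring an additive error of order $\eps(\phi(n))$ from the endpoints and the discretization. Because $V$ is a closed manifold, it is compact and has bounded geometry, so the implied constants are uniform in $n$; combined with $\eps(\phi(n)) \to 0$ this makes the distortion vanish.

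**To finish**, I would combine the two directions into a distortion bound: for matched pairs $(v,[b_i])$ with $v \in B_i$ one has $d(v,b_i) \le \eps(\phi(n))$, and together with the vertex-metric comparison above this yields that the correspondence has distortion bounded by some $C\,\eps(\phi(n)) \to 0$. By the standard criterion that a sequence of correspondences with vanishing distortion forces Gromov--Hausdorff convergence (the reference \cite{verde} supplies the precise quantitative statement), I conclude that $(\triangle_0(\lvert K(\beta_{\phi(n)}) \rvert), d^0_{\phi(n)})$ converges to $(V,d)$ in the Gromov--Hausdorff sense, as claimed.
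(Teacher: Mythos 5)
Your overall strategy (an explicit correspondence $[b_i]\leftrightarrow b_i$, arbitrary $v\in V$ matched to a vertex of a member containing it, plus a distortion estimate) is the right framework, and in fact the paper offers no argument of its own to compare against: the lemma is stated as an observation with a pointer to \cite{verde}. The covering half of your correspondence and the inequality $d(b_i,b_j)\le d^0_{\phi(n)}([b_i],[b_j])$ are fine. The genuine gap is exactly at the step you yourself single out as the main obstacle, and your proposed resolution does not close it. You claim that a minimizing geodesic of length $L=d(b_i,b_j)$ can be shadowed by $O(L/\epsilon(\phi(n)))$ members of $\beta_{\phi(n)}$, so that the edge-path metric satisfies $d^0_{\phi(n)}\le d + C\,\epsilon(\phi(n))$. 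But the members of $\beta_{\phi(n)}$ have diameter bounded only \emph{above} by $\epsilon(\phi(n))$; nothing bounds individual members, or the Lebesgue number $\delta_n$, from below in terms of $\epsilon(\phi(n))$. A chain of pairwise-overlapping members along the geodesic may therefore need on the order of $L/\delta_n$ links with $\delta_n\ll\epsilon(\phi(n))$, and each link of the corresponding edge path costs $d(b_{j_k},b_{j_{k+1}})$, which can be as large as $\epsilon(\phi(n))$ while the chain advances only about $\delta_n$ along the geodesic. What this argument actually yields is a bound of the form $d^0_{\phi(n)}\le L\,(1+C\,\epsilon(\phi(n))/\delta_n)$: a \emph{multiplicative} error, not the additive $O(\epsilon(\phi(n)))$ you need, and since a Lebesgue number never exceeds the mesh of the cover ($\delta_n\le\epsilon(\phi(n))$), this factor is never close to $1$. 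Compactness and bounded geometry of $(V,g)$ cannot repair this, because the inefficiency sits in the cover and in the placement of the points $b_i$, not in the manifold.

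Nor is this merely a lossy estimate. On $S^1$ take arcs of length $a=2\epsilon/3$ in cyclic order with consecutive overlaps just under $a/2$ (so each arc keeps a private point, the necessary condition of Lemma \ref{irred-prop}), and choose the $b_i$ alternately at the forward and the backward end of their arcs; the constraint $d(b_i,b_j)\le\epsilon$ for intersecting members is respected, the nerve is the cycle, yet every edge path between essentially antipodal vertices has length about \emph{twice} their ambient distance, and this persists at every scale $\epsilon_n\to 0$, so the distortion does not vanish. Hence, under the path-metric reading of ``extending it in the natural way'' that you (reasonably) adopt, the statement needs more input than you use: either the distance between non-adjacent vertices should be taken to be the ambient one $d(b_i,b_j)$ --- i.e. $\triangle_0(\lvert K(\beta_{\phi(n)})\rvert)$ is regarded simply as an $\epsilon(\phi(n))$-net in $(V,d)$, in which case your correspondence gives distortion at most $2\epsilon(\phi(n))$ immediately and the lemma is the standard net fact quoted from \cite{verde} --- or else the covers $\beta_{\phi(n)}$ and the representatives $b_i$ must be chosen coherently so that geodesics are shadowed by edge paths with additive error; that coherence is precisely what your proof assumes rather than proves.
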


Consider   $\mathcal{D}_{\ast}(V) $,  
the graded $\Z$-module of  
general currents on $V$, and
for every  $\alpha$ in $\mathcal{C}_{V}$ let  $\mathcal{P}_{\ast}(\lvert K(\alpha) \rvert)$
denote the graded $\Z$-module of \textbf{polyhedral} currents on the Euclidean realization
of  $K(\alpha)$.  Hence if $y : \lvert K(\alpha) \rvert \to V$ is of
Lipschitz type, then we get a linear map
$y_{\sharp} : \mathcal{P}_{\ast}(\lvert K(\alpha) \rvert) \to \mathcal{D}_{\ast}(V)$.\footnote{See \cite{federer} for more details.}

Let $\mathbf{M} \equiv \mathbf{M}_{g}$ be the mass norm on  $\mathcal{D}_{\ast}(V)$ 
induced by the Riemannian metric $g$. A fundamental fact, 
to be found in \cite{federer}, asserts
that the closure in $\mathcal{D}_{\ast}(V)$ with respect to $\mathbf{M}$ of pushforwarded 
polyhedral currents by 
Lipschitz maps into $V$ is $\mathcal{R}_{\ast}(V)$, the
$\Z$-module of \textbf{rectifiable} currents on $V$. An important
sub-module of $\mathcal{R}_{\ast}(V)$, denoted by $\mathcal{I}_{\ast}(V)$,
is the $\Z$-module of \textbf{integral} currents; it 
consists of rectifiable currents whose boundary is also rectifiable. 
The choice of an atlas on $V$ and elementary constructions from geometric measure
theory give: 

\begin{proposition}
$\mathcal{R}_{\ast}(V)$ and $\mathcal{I}_{\ast}(V)$  
depend on the  Lipschitz structure on $V$ chosen.
\end{proposition}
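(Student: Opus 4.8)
The plan is to show that the submodules $\mathcal{R}_{\ast}(V)$ and $\mathcal{I}_{\ast}(V) \subset \mathcal{D}_{\ast}(V)$ are functions of the bi-Lipschitz equivalence class of $(V,d)$ — exactly the data encoded by a Lipschitz atlas — being insensitive to the particular Riemannian metric $g$ representing that class, yet genuinely sensitive to a change of Lipschitz structure. First I would recall from the theory in \cite{federer} that a subset of $V$ is $k$-rectifiable precisely when it is covered, up to $\mathcal{H}^{k}$-null sets, by countably many Lipschitz images of subsets of $\R^{k}$, and that this notion, together with mass, is preserved up to multiplicative constants under bi-Lipschitz homeomorphisms. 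Since the only ingredient needed to declare a map $y : \lvert K(\alpha) \rvert \to V$ to be of Lipschitz type is a Lipschitz atlas on $V$, the whole pushforward $y_{\sharp} : \mathcal{P}_{\ast}(\lvert K(\alpha) \rvert) \to \mathcal{D}_{\ast}(V)$, and hence the class obtained as mass-limits of such pushforwards, is already a function of the Lipschitz structure alone.

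The core step is to verify that $\mathcal{R}_{\ast}(V)$ does not depend on which metric is used inside a fixed bi-Lipschitz class. Because $V$ is compact, any two Riemannian distances $d_{g}, d_{g'}$ are bi-Lipschitz equivalent, so the induced mass norms satisfy $C^{-1}\mathbf{M}_{g} \leq \mathbf{M}_{g'} \leq C\,\mathbf{M}_{g}$ for some $C \geq 1$. Comparable norms define the same topology, hence the same closure of the set of Lipschitz pushforwards of polyhedral currents, so $\mathcal{R}_{\ast}(V)$ is literally the same submodule for $g$ and $g'$. More generally, a bi-Lipschitz homeomorphism $f$ relating two Lipschitz structures induces $f_{\sharp}$ that carries Lipschitz images to Lipschitz images, preserves rectifiability, and distorts mass by bounded factors; thus $f_{\sharp}$ restricts to an isomorphism between the corresponding modules of rectifiable currents, exhibiting $\mathcal{R}_{\ast}$ as a bi-Lipschitz invariant.

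For the integral currents I would use that the boundary operator $\partial$ on $\mathcal{D}_{\ast}(V)$ is defined by duality with forms and is therefore metric-free, and that it commutes with Lipschitz pushforward, $\partial f_{\sharp} = f_{\sharp}\partial$. Since $\mathcal{I}_{\ast}(V) = \{\ T \in \mathcal{R}_{\ast}(V)\ \vert\ \partial T \in \mathcal{R}_{\ast}(V)\ \}$, the conclusion for $\mathcal{R}_{\ast}$ transfers verbatim: $\mathcal{I}_{\ast}(V)$ is determined by the Lipschitz structure and transported isomorphically by bi-Lipschitz homeomorphisms. To see that the dependence is genuine — that these modules are not mere topological invariants — I would transport a fixed Lipschitz structure by a homeomorphism $h$ that is not bi-Lipschitz (for instance one sending a geodesic segment to a curve of infinite length, or to a set of fractional Hausdorff dimension). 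Under the pulled-back structure that image becomes a rectifiable current, while with respect to the original $(V,d)$ it is not rectifiable, so the two structures yield distinct classes.

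The main obstacle I anticipate is exactly the tension between the metric-dependence of $\mathbf{M}_{g}$ and the asserted dependence on the Lipschitz structure alone: one must argue that although the numerical values of mass change with $g$, the submodule obtained as the mass-closure is unchanged within a bi-Lipschitz class. This is resolved by the norm-comparability estimate above, which forces $g$ and $g'$ to share the same Cauchy sequences and hence the same closure; the residual verifications — commutation of $\partial$ with $f_{\sharp}$ and preservation of rectifiability under bi-Lipschitz maps — are the elementary facts from geometric measure theory alluded to in the statement.
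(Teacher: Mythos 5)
Your proposal is correct, and it is worth noting at the outset that the paper itself offers no argument here: the Proposition is stated as a consequence of ``the choice of an atlas on $V$ and elementary constructions from geometric measure theory,'' with no proof given. What you wrote is a faithful expansion of exactly those elementary constructions, split into the two claims the statement implicitly contains: (i) $\mathcal{R}_{\ast}(V)$ and $\mathcal{I}_{\ast}(V)$ are well defined once a Lipschitz structure is fixed --- in particular they do not see the particular Riemannian metric $g$, since on a compact manifold any two such metrics induce comparable mass norms, hence the same $\mathbf{M}$-closure of Lipschitz pushforwards of polyhedral currents; and (ii) the modules are genuinely sensitive to the Lipschitz structure, i.e.\ they are not topological invariants, since a homeomorphism that fails to be bi-Lipschitz can carry a rectifiable set onto a purely unrectifiable one. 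Your treatment of $\mathcal{I}_{\ast}(V)$ via the metric-free boundary operator and the paper's definition (rectifiable currents with rectifiable boundary) is also right, and this two-part reading matches the role the Proposition plays later (Theorem \ref{nerve-convergence} and Remark \ref{lipschitz-equivalent}, where uniqueness of the Lipschitz structure in dimension $\neq 4$ is what makes the approximating map structure-independent).

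One step deserves more care. In part (ii) you compare the two modules as submodules of a single ambient $\mathcal{D}_{\ast}(V)$, but $\mathcal{D}_{\ast}(V)$ is itself defined by duality with forms attached to the original (smooth, hence Lipschitz) structure, and pushforward of currents along a homeomorphism $h$ that is merely continuous is not defined; so ``under the pulled-back structure that image becomes a rectifiable current'' does not immediately produce an element of the original $\mathcal{D}_{\ast}(V)$ against which to test membership. The clean fix is to phrase the non-invariance at the level of carrying sets: a rectifiable $k$-current has mass measure concentrated on a countably $k$-rectifiable set, rectifiability of sets is preserved by bi-Lipschitz homeomorphisms but destroyed by suitable homeomorphisms (your Koch-type example), and therefore the recipe ``$\mathbf{M}$-closure of Lipschitz pushforwards'' applied to two inequivalent Lipschitz structures cannot yield classes of currents with the same carrying sets. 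With that rephrasing your argument is complete, and it in fact establishes slightly more than the paper asserts: $\mathcal{R}_{\ast}$ and $\mathcal{I}_{\ast}$ are exactly bi-Lipschitz invariants, functorial and unchanged within a bi-Lipschitz class, and distinguishable across classes.
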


\bigskip

Embed now the $2$-simplices  of
$\lvert K(\beta_{\phi(n)}) \rvert$  in $V$ by means of a Lipschitz map $y_{\phi(n)}^{2}$
using the geodesics that correspond to the
$1$-simplices as their boundary, straightening them as much as
possible, so that in the process their mass (with respect to $g$) tends to
minimize.

Then proceed inductively to  get, for every $n$ and each $j$ not bigger than the dimension of $V$,  
a Lipschitz embedding 
$y_{\phi(n)}^{j} : \triangle_{j}(\lvert K(\beta_{\phi(n)}) \rvert) \hookrightarrow V$ 
of the $j$-simplices in the Euclidean realization of $K(\beta_{\phi(v)})$ inside $V$,  all whose
images have a diameter not larger than $\epsilon(\phi(n))$;
we obtain a map at the level of  currents
$$y_{\phi(n) \sharp} : \mathcal{P}_{\ast}(\lvert K(\beta_{\phi(n)}) \rvert) \hookrightarrow \mathcal{I}_{\ast}(V)
\subset \mathcal{R}_{\ast}(V) .$$
More precisely\footnote{Visit Section \ref{realization}, if needed, for the nomenclature used.}, if $\vec{i}$ is in 
$\wedge_{j}(\beta_{\phi(n)})$, then  
$y_{\phi(n) \sharp} \sigma^{j}_{\vec{i}}$
is  almost minimal among those rectifiable currents whose boundary is 
$ \partial  y_{\phi(n) \sharp} \sigma^{j}_{\vec{i}} = y_{\phi(n) \sharp}\partial \sigma^{j}_{\vec{i}}$,
for every $j \leq \text{dim} V$. By almost minimal we mean that the minimum of mass might not occur,
however there is a sequence of Lipschitz maps leading to an infimum.

This process of approximation gives:


\begin{theorem}     \label{nerve-convergence}
Let $(V,g)$ be a smooth closed Riemannian manifold. 
Let $\{ \alpha_{n} \}_{n \in \N}$
be a sequence in $\mathcal{C}_{V}$, with $\alpha_{n+1} \succ \alpha_{n}$, and such that the diameter of each
member of $\alpha_{n}$ goes to zero as $n$ goes to infinity. Then for every $k$ and every positive $\epsilon$  
there exists some $m \equiv m(\epsilon)$, a cover $\beta_{k,m}$ minimizing 
$ S_{k}(\alpha_{m})$, and a Lipschitz map  
$y_{m} : \lvert K(\beta_{k,m}) \rvert \to V $
such that
$$ \mathbf{M}_{g}(\ y_{m \sharp} \lvert K(\beta_{k,m}) \rvert - \mathcal{V}\ ) < \epsilon  ,$$ 
where $\mathcal{V} = \lVert \mathcal{V} \rVert \wedge \vec{\mathcal{V}}$ is the  current representing $V$,
and $\lvert  K(\beta_{k,m}) \rvert$ is the polyhedral current that corresponds to an Euclidean
realization of $K(\beta_{k,m})$. If the dimension of $V$ is not $4$, such a map is independent
of the Lipschitz structure on $V$.
\end{theorem}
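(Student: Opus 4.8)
The plan is to identify $\mathcal{V}$ as the unique mass-minimizer in its homology class, calibrated by the Riemannian volume form, and then show that the pushforwarded polyhedral currents produced in Section \ref{metric} converge to it. Write $n = \dim V$ and let $\omega_{g}$ be the volume form of $(V,g)$. Since $V$ is closed, $\omega_{g}$ is a closed $n$-form of comass one with $\mathcal{V}(\omega_{g}) = \mathbf{M}_{g}(\mathcal{V}) = \mathrm{Vol}_{g}(V)$; hence for any integral $n$-current $T$ homologous to $\mathcal{V}$ one has $T(\omega_{g}) = \mathcal{V}(\omega_{g})$ by Stokes and $\mathbf{M}_{g}(T) \geq T(\omega_{g})$ by the comass bound, so $\mathcal{V}$ is the unique $\mathbf{M}_{g}$-minimizer in $[\mathcal{V}] \in H_{n}(V)$ (assuming $V$ orientable; otherwise one argues with $\Z/2$ coefficients). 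This calibration is the mechanism that will convert a cheap flat-norm statement into the mass-norm estimate demanded by the theorem.

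First I would fix $k$, pass to the subsequence $\beta_{\phi(m)}$ of irreducible covers, and set $T_{m} := y_{m \sharp}\lvert K(\beta_{k,m}) \rvert \in \mathcal{I}_{n}(V)$, taking the top-dimensional part (the image under $y_{m\sharp}$ of the fundamental $n$-cycle) of the graded map of Section \ref{metric}, built from the inductive almost-minimal Lipschitz fillings $y_{m}^{j}$. For $m$ large the mesh $\epsilon(\phi(m))$ lies below the injectivity radius, so the comparison $y_{m} \circ x_{m}$ is homotopic to $\mathrm{id}_{V}$ and carries the top cycle into the fundamental class; any defect from the nerve failing to be a pseudomanifold contributes a boundary whose mass tends to zero under the mesh control, whence $\mathbf{M}_{g}(\partial T_{m}) \to 0$ and $T_{m}$ sits in $[\mathcal{V}]$ up to a flat-small error.

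Next I would establish convergence. Writing $T_{m} = \theta_{m}\,\mathcal{V}$ with $\theta_{m} \in \Z$ almost everywhere (every integral $n$-current on an $n$-manifold is an integer multiplicity times $\mathcal{V}$), the homology constraint gives $\int_{V} \theta_{m}\, d\mathrm{vol}_{g} \to \mathrm{Vol}_{g}(V)$. The Gromov--Hausdorff convergence of the skeleta (the unlabeled Lemma of Section \ref{metric}) makes $\mathrm{spt}(T_{m})$ an $\epsilon(\phi(m))$-dense set, and the almost-minimality of the fillings prevents gaps, so $\theta_{m} \geq 1$ almost everywhere for $m$ large; then $\int_{V}(\theta_{m} - 1)\, d\mathrm{vol}_{g} \to 0$ with a nonnegative integer integrand forces $\theta_{m} \to 1$ in $L^{1}$, i.e. $\mathbf{M}_{g}(T_{m} - \mathcal{V}) = \int_{V}\lvert \theta_{m} - 1\rvert\, d\mathrm{vol}_{g} \to 0$, and one selects $m(\epsilon)$ accordingly. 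Equivalently one may extract a flat limit by Federer--Fleming compactness (bounded mass, boundary mass $\to 0$), identify it with $\mathcal{V}$ through Proposition \ref{nerve-thread}, and upgrade flat to mass convergence via the calibration, since the almost-minimal construction yields $\mathbf{M}_{g}(T_{m}) \to \mathrm{Vol}_{g}(V)$ and the vanishing excess $\mathbf{M}_{g}(T_{m}) - T_{m}(\omega_{g}) \to 0$ aligns the approximate tangent planes with the calibrated ones.

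The hard part will be exactly the lower bound $\theta_{m} \geq 1$ almost everywhere, that is, guaranteeing the pushforwarded polyhedron covers $V$ with the correct multiplicity, neither leaving gaps nor folding over to create cancelling sheets; this is where the isoperimetric (almost-minimal) control of each $y_{m}^{j}$ must be combined with the $\epsilon$-density of the skeleta, and it is the one step that genuinely uses geometric measure theory rather than the soft homological uniqueness of top cycles on an $n$-manifold. Finally, for the independence when $\dim V \neq 4$ I would invoke the existence and uniqueness of Lipschitz structures in those dimensions (Sullivan's theorem, which fails only in dimension four): the entire filling construction, and hence the limit current, is then insensitive to the chosen Lipschitz structure, while in dimension four the smoothing theory degenerates and such independence need not hold.
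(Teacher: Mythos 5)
Your proposal reuses the paper's own construction (geodesic $1$-skeleton, inductive almost-minimal Lipschitz fillings, mesh $\epsilon(\phi(m))\to 0$) and then tries to supply the convergence argument that the paper leaves implicit: the paper's ``proof'' of Theorem \ref{nerve-convergence} is nothing more than the construction of Section \ref{metric} followed by the assertion, so any honest write-up has to add what you are adding. Your scaffolding is coherent as far as it goes (in top dimension the flat and mass norms agree, integral $n$-currents in $V$ are integer-valued BV multiplicity functions, and Federer--Fleming plus a degree argument would pin the limit), but it hangs on a step that is genuinely unproven and is in fact where the whole difficulty lives: the claim that $\mathbf{M}_{g}(\partial T_{m})\to 0$, i.e.\ that the top-dimensional simplices of $K(\beta_{k,m})$ can be coherently oriented so that their sum is almost a cycle. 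Each $(n-1)$-face of an embedded top simplex has mass of order $\epsilon^{n-1}$, while the number of top simplices needed for $T_{m}$ to have mass comparable to $\mathrm{Vol}_{g}(V)$ is of order $\epsilon^{-n}$; hence unless all but a vanishing fraction of the $(n-1)$-faces are shared by exactly two top simplices with opposite induced orientations, the boundary mass diverges like $\epsilon^{-1}$ instead of tending to zero. Nothing in the paper supplies this pseudomanifold-with-coherent-orientations structure for the nerves of $S_{k}$-minimizing covers: irreducibility only bounds the dimension and gives surjectivity of connecting maps, and Section \ref{metric} explicitly warns that the members of $\beta_{k,n}$ need not be contractible, so no nerve-theorem argument is available at any finite stage to transfer the fundamental class of $V$ to $K(\beta_{k,m})$. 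The same missing fact undercuts your ``homology constraint'' $\int_{V}\theta_{m}\,d\mathrm{vol}_{g}\to \mathrm{Vol}_{g}(V)$ and your lower bound $\theta_{m}\geq 1$, both of which presuppose that $T_{m}$ is close to a cycle of degree one.

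A second, structural remark: if the missing fact were available --- if the top simplices carried a fundamental cycle and $y_{m}\circ x_{m}\simeq \mathrm{id}_{V}$ gave $y_{m}$ degree one on it --- then your calibration and BV machinery would be superfluous, because the pushforward of an integral $n$-cycle under a Lipschitz map into the $n$-manifold $V$ is, by the constancy theorem, exactly $(\deg y_{m})\,\mathcal{V}=\mathcal{V}$: folds cancel as currents, and the mass error is zero, not merely small. So the proposal is simultaneously over-engineered (calibration adds nothing once the cycle structure is granted) and under-supported (the cycle structure, which is the real content, is never established). Your handling of the final clause is fine and matches the paper: the independence for $\dim V\neq 4$ is exactly Remark \ref{lipschitz-equivalent} (Moise--Donaldson--Sullivan). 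But as written, the central mass estimate of the theorem is not proved.
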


\begin{remark}    \label{lipschitz-equivalent}
We know, thanks to the work of E. Moise, S. Donaldson and D. Sullivan, that only in
dimension  $4$ there are smooth manifolds that are homeomorphic but not Lipschitz 
equivalent. 
\end{remark}


\section{The category of $\Gamma$-spaces} \label{dynamics}

We denote by $\Gamma$ a countable or discrete group or semigroup whose cardinality is $\aleph_{0}$. 
Let $\rho : \Gamma \to \text{Map}(V,V)$
be a representation of $\Gamma$ on the set of mappings of $V$, where we understand,
if $\Gamma$ is a group, that $\rho(\Gamma)$ is a sub-group of $\text{Homeo}(V)$,
the group of homeomorphisms on $V$. If $\Gamma$ is a semigroup, then 
$\rho(\Gamma)$ is a sub-semigroup of $\text{End}(V)$, the semigroup
of endomorphisms on $V$.  We denote a structure of this type by a tuple $(V, \Gamma, \rho)$,
and speak of a system, a representation of $\Gamma$, or a $\Gamma$-space.

\begin{remark}
We restrict  to  discrete groups and semigroups  since:
\begin{enumerate}
\item \textbf{Some} of the results are not valid for arbitrary $\Gamma$'s.  
\item \textbf{All} the examples in this article belong to this class.
\end{enumerate}
\end{remark}

We identify the totality of those structures  with the objects in  
the category of $\Gamma$-spaces. The standard morphisms between objects in this category
are:

1- Conjugations: Two systems $(V,\Gamma, \rho)$ and $(W,\Gamma, \rho')$ are said to be \textbf{conjugated} 
if there exists a homeomorphism $x : V \to W$ that interwinds the action of $\Gamma$, i.e. a homeomorphism 
between $V$ and $W$  that is $\Gamma$-equivariant. The notion of being conjugated  is a strong equivalence 
relation; not only the  underlying (topological-geometric) spaces 
in question are homeomorphic,
moreover the dynamics induced by the maps are, up to a continuous change of coordinates say,
equivalent.

2- Factors/Extensions: $(V, \Gamma, \rho )$ is said to be an \textbf{extension} of $(W, \Gamma, \rho')$,
or $(W, \Gamma, \rho')$ is said to be a \textbf{factor} of $(V, \Gamma, \rho )$, if there is
a continuous surjection $x : V \to W$, so that whenever $\gamma$ is in
$\Gamma$ we have $x \cdot \rho(\gamma) = \rho'(\gamma) \cdot x$.

\begin{remark}
Of course if $(V, \Gamma, \rho )$ is both a factor and an extension of $(W, \Gamma, \rho')$,
then  both systems are conjugated.
\end{remark}


\begin{remark}
The advantage of using the language of categories in this context is that some operations
of algebraic topology (for example the loop functor, the suspension functor
and the smash product) can be used as self-functors.
By those means one obtains new systems from known ones (see Section \ref{higher-rank}). 
\end{remark}

Consider the action of $\rho(\Gamma)$'s inverses on elements of $\mathcal{C}_{V}$. 
If $\gamma$ is in $\Gamma$ we  have a
map $\rho(\gamma) : \mathcal{C}_{V} \to \mathcal{C}_{V}$ and also an induced map
$\rho(\gamma)^{-1} : \mathcal{C}_{V} \to \mathcal{C}_{V},$ where in the case of semigroups we
understand that $\rho(\gamma)^{-1} A$ is given by $V \cap \{\ v\  \lvert\   \rho(\gamma) v \in A\  \}$ 
for every subset $A$ of $V$. Whenever $F$
is a finite subset of $\Gamma$ and $\alpha$ is in $\mathcal{C}_{V}$ we set

$$\alpha_{F} := \bigcap_{\gamma \in F} \rho(\gamma)^{-1} \alpha. $$

In what follows we  describe  
$K(\alpha_{F})$ as $F$ increases both from  a quantitative and a 
qualitative perspective.


\subsection{Exponential simplicial growth: topological entropy}  \label{amenable}

Assume that $\Gamma$ is  generated by a finite subset of elements, say 
$H$, where we assume, if $\Gamma$ is a group, that $H$ contains all its inverses, i.e.
that $H = H^{-1}$.  Then whenever $F$ is a finite subset of $\Gamma$
we define its boundary with respect to $H$, that we denote by
$\partial_{H} F$, as the subset of $F$  made up of those $\gamma$'s such that
$h \gamma$ is not in $F$ for some $h$ in $H$.

Consider an increasing sequence
of subsets exhausting  $\Gamma$, say $\{ F(n) \}_{n \in \mathbb{N}}$.  Such a 
sequence is said to be of F\o lner type if
the quotient between $\lvert \partial_{H} F(n)  \rvert$
and $\lvert F(n) \rvert$ goes to zero as $n$ goes to infinty.
If such a sequence exists, then $\Gamma$ is said to be
\textbf{amenable} (see \cite{verde} for more about this).


\begin{remark}   \label{repetition}
Since all the results in Section  \ref{control-convergence} 
are valid in this context,  we will not repeat analogous statements unless this is relevant; 
one should replace $c(n)$ by $c(\lvert F(n) \rvert)$, 
and
$ \alpha_{n}$ by 
$$\bigcap_{\gamma \in F(n)} \rho(\gamma)^{-1} \alpha,$$
for example. We will write $\text{ent}_{k}(\alpha, \Gamma , \rho, c)$ instead of 
$\text{ent}_{k}(\alpha_{n}, c(n))$ in what follows, and similarly for
$\text{Dim}(\alpha, \Gamma , \rho, c)$.
\end{remark}


Consider the standard  sequence 
$\{ c(n)  \}_{n \in \N} = \{ n \}_{n \in \N}$ (see  Remarks \ref{standard} and  \ref{repetition}). 
The next Lemma asserts that the upper and lower limits 
in Theorem \ref{controlling-growth} for $\text{ent}_{0}$   coincide.

\begin{lemma}     \label{orstein-weiss}
Let $\{ F(n) \}_{n \in \mathbb{N}}$ be a F\o lner sequence 
for $\Gamma$.  Then the following limit exists, and is
independent of the F\o lner sequence: 

\bigskip

For every  open cover $\alpha$ 
$$\lim_{n} \frac{\log S_{0}(\alpha_{F(n)})}{\lvert F(n) \rvert} =: \text{ent}_{0}(\alpha, \Gamma , \rho) .   $$

\end{lemma}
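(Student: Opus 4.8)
The plan is to recognize this as an instance of the Ornstein--Weiss subadditivity theorem for amenable groups, and to reduce the statement to it by verifying the three structural properties that theorem requires. Concretely, I would study the set function
$$\psi(F) := \log S_{0}(\alpha_{F}), \qquad F \subset \Gamma \text{ finite},$$
where $\alpha_{F} = \bigcap_{\gamma \in F}\rho(\gamma)^{-1}\alpha$, and show that $\psi$ is monotone, invariant, and subadditive. Since $S_{0}(\alpha_{F}) \geq 1$ we have $\psi \geq 0$, so $\psi$ is a legitimate input for the Ornstein--Weiss lemma, whose conclusion is precisely that $\psi(F(n))/|F(n)|$ converges along every F\o lner sequence to a limit independent of the chosen sequence.

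The heart of the argument is subadditivity. Observing that $\alpha_{F_{1} \cup F_{2}} = \alpha_{F_{1}} \cap \alpha_{F_{2}}$, it suffices to prove $S_{0}(\alpha \cap \beta) \leq S_{0}(\alpha)\, S_{0}(\beta)$ for arbitrary $\alpha, \beta \in \mathcal{C}_{V}$. I would pick refinements $\alpha' \succ \alpha$ and $\beta' \succ \beta$ realizing the minima defining $S_{0}(\alpha)$ and $S_{0}(\beta)$, then check that $\alpha' \cap \beta' \succ \alpha \cap \beta$: any member of $\alpha' \cap \beta'$ has the form $A' \cap B'$ with $A' \subseteq A \in \alpha$ and $B' \subseteq B \in \beta$, hence is contained in $A \cap B \in \alpha \cap \beta$. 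Because every member of the common refinement is an intersection of one member of $\alpha'$ with one of $\beta'$, one gets $\mathcal{G}_{0}(\alpha' \cap \beta') \leq \mathcal{G}_{0}(\alpha')\, \mathcal{G}_{0}(\beta')$, and minimality of $S_{0}(\alpha \cap \beta)$ over refinements of $\alpha \cap \beta$ yields the product bound. Taking logarithms gives $\psi(F_{1} \cup F_{2}) \leq \psi(F_{1}) + \psi(F_{2})$.

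For invariance I would use the homomorphism property of $\rho$: since preimages commute with intersections, $\rho(\gamma g)^{-1} = \rho(g)^{-1}\rho(\gamma)^{-1}$ gives $\alpha_{Fg} = \rho(g)^{-1}\alpha_{F}$. When $\Gamma$ is a group, $\rho(g)$ is a homeomorphism, so $A \mapsto \rho(g)^{-1}A$ is a bijection of $\alpha_{F}$ onto $\rho(g)^{-1}\alpha_{F}$ preserving non-empty intersections and, through $\rho(g)$, carrying refinements to refinements; hence the nerves are isomorphic and $S_{0}(\rho(g)^{-1}\alpha_{F}) = S_{0}(\alpha_{F})$, i.e. $\psi(Fg) = \psi(F)$. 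Monotonicity is immediate: if $F_{1} \subseteq F_{2}$ then $\alpha_{F_{2}} \succ \alpha_{F_{1}}$, so $\{\gamma : \gamma \succ \alpha_{F_{2}}\} \subseteq \{\gamma : \gamma \succ \alpha_{F_{1}}\}$ by transitivity of $\succ$, and minimizing $\mathcal{G}_{0}$ over the smaller set gives $S_{0}(\alpha_{F_{2}}) \geq S_{0}(\alpha_{F_{1}})$, whence $\psi(F_{2}) \geq \psi(F_{1})$.

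With these three properties in hand, the Ornstein--Weiss lemma delivers both assertions at once. I expect the main obstacle to be twofold. First, subadditivity must be established at the level of the \emph{minimizers} defining $S_{0}$, not for $\alpha_{F}$ directly, since the product estimate on the number of $0$-simplices of a common refinement is only useful after passing to the extremal refinements; the step requiring care is verifying that the common refinement of the two minimizers still refines $\alpha \cap \beta$. Second, the invariance argument is transparent only when $\rho(g)$ is invertible; in the semigroup case $\rho(g)^{-1}$ is a preimage and may fail to preserve the nerve unless $\rho(g)$ is surjective, so one must either restrict to surjective endomorphisms or appeal to the semigroup form of the Ornstein--Weiss theorem, in which $\psi$ need only be invariant up to error terms controlled by the F\o lner condition (and one should confirm that the left/right convention of $\partial_{H}F$ matches the side on which $\psi$ is invariant, which is harmless for amenable $\Gamma$ since one may take symmetric F\o lner sequences).
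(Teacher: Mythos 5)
Your proposal is correct and follows essentially the same route as the paper: the paper's proof likewise reduces the statement to the Ornstein--Weiss convergence result for subadditive functions (citing Gromov's paper for its proof), with the key input being the bound $\mathcal{G}_{0}(\alpha \cap \beta) \leq \mathcal{G}_{0}(\alpha)\,\mathcal{G}_{0}(\beta)$ and hence $S_{0}(\alpha \cap \beta) \leq S_{0}(\alpha)\, S_{0}(\beta)$. Your write-up in fact supplies details the paper leaves implicit --- the passage through the extremal refinements to get subadditivity at the level of $S_{0}$, plus monotonicity and (sub)invariance --- but these are verifications of the same hypotheses for the same key lemma, not a different argument.
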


\begin{proof}
The proof follows from a general  convergence result for
subadditive functions, known as the Orstein-Weiss Lemma; a proof can be found in \cite{holomorphic-maps}.
To use such a result it suffices to note that (see \cite{Wal}, for example) 
$$\lvert \triangle_{0}(\alpha \cap \beta) \rvert = \mathcal{G}_{0}(\alpha \cap \beta) 
\leq \mathcal{G}_{0}(\alpha)\  \mathcal{G}_{0}(\beta) \ , $$
hence
$$ S_{0}(\alpha \cap \beta) \leq S_{0}(\alpha)\ S_{0}(\beta) \ .  $$  
\end{proof}

\begin{remark}    \label{topological-entropy}
If the growth of the number of simplices in $\{ K(\alpha_{F(n)}) \}_{n \in \mathbb{N}}$ is strictly exponential, then 
$(V, \Gamma, \rho)$ is said to have non zero finite  topological entropy with respect to the cover
$\alpha$. The supremum of $\text{ent}_{0}(\alpha, \Gamma, \rho)$ among
all $\alpha$'s in $\mathcal{C}_{V}$, that might not be finite, is known as the topological entropy for
$(V,\Gamma, \rho)$, and denoted by $\text{ent}_{0}(V, \Gamma, \rho)$.
\end{remark}

Recall some  constructions/results in Section \ref{control-convergence}: for a fixed $k$ 
there exists a sequence $\{ \beta_{k,F(n)} \}_{n \in \mathbb{N}}$ of irreducible covers that
achieve, for each $n$, the minimum of $ S_{k}(\alpha_{F(n)})$. In particular, by Lemma \ref{orstein-weiss} 
we have that $\text{ent}_{0}(\alpha, \Gamma, \rho)$ is given by
$$\lim_{n} \frac{\log S_{0}(\alpha_{F(n)})}{\lvert F(n)  \rvert} ,$$ 
hence 
$$ \text{ent}_{0}(\alpha, \Gamma, \rho) = 
\lim_{n} \frac{\log \mathcal{G}_{0}(\beta_{0, F(n)})}{\lvert F(n) \rvert} .   $$

On the other hand, since
$$ S_{0}(\alpha_{F(n)}) = 
S_{0}(\ \alpha_{F(n-1)} \cap \bigcap_{\gamma \in F(n) \setminus F(n-1)} \rho(\gamma)^{-1} \alpha\ )   $$
we observe that
$$ \mathcal{G}_{0}(\beta_{0, F(n)}) = S_{0}(\alpha_{F(n)})\ \leq\ 
\mathcal{G}_{0}(\beta_{0, F(n-1)})\  S_{0}(\alpha)^{\lvert F(n) \setminus F(n-1) \rvert},$$
where we recall that $S_{0}(\alpha)$ is the best lower bound for $\mathcal{G}_{0}(\beta)$
among those $\beta$'s finer than $\alpha$, 
to infer by recursion:


\begin{proposition}  \label{stage-bound}
For every $\alpha$  the number of 0-simplices in the sequence 
$\{ K(\beta_{0, F(n)}) \}_{n \in \mathbb{N}}$
is bounded at every stage by 
$$ \mathcal{G}_{0}(\beta_{0, F(n)}) \leq  S_{0}(\alpha)^{\lvert F(n) \rvert} . $$

In particular, Theorem \ref{controlling-growth} ensures that for each $k$ we have the  bound 

$$ \text{ent}_{k}^{\uparrow}(\alpha, \Gamma, \rho) \leq (k+1) \log S_{0}(\alpha) . $$

\end{proposition}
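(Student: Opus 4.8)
The first inequality is already reduced, in the paragraph preceding the statement, to the one-step estimate
$$\mathcal{G}_{0}(\beta_{0, F(n)}) = S_{0}(\alpha_{F(n)}) \leq \mathcal{G}_{0}(\beta_{0, F(n-1)})\ S_{0}(\alpha)^{\lvert F(n) \setminus F(n-1) \rvert},$$
and the plan is to iterate it; a slightly cleaner and self-contained route, which also disposes of the base case, is to apply submultiplicativity of $S_{0}$ directly to the whole intersection. Two facts are needed. First, the submultiplicativity $S_{0}(\gamma \cap \delta) \leq S_{0}(\gamma)\ S_{0}(\delta)$ recorded in the proof of Lemma \ref{orstein-weiss}, which extends by an obvious induction to $S_{0}\left( \bigcap_{\gamma \in F} \rho(\gamma)^{-1}\alpha \right) \leq \prod_{\gamma \in F} S_{0}(\rho(\gamma)^{-1}\alpha)$. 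Second, and this is the only point requiring a genuine argument, the pullback estimate $S_{0}(\rho(\gamma)^{-1}\alpha) \leq S_{0}(\alpha)$ for each $\gamma$. Granting both, and recalling $\alpha_{F(n)} = \bigcap_{\gamma \in F(n)} \rho(\gamma)^{-1}\alpha$ together with $\mathcal{G}_{0}(\beta_{0,F(n)}) = S_{0}(\alpha_{F(n)})$, one gets at once
$$\mathcal{G}_{0}(\beta_{0,F(n)}) = S_{0}(\alpha_{F(n)}) \leq \prod_{\gamma \in F(n)} S_{0}(\rho(\gamma)^{-1}\alpha) \leq S_{0}(\alpha)^{\lvert F(n) \rvert}.$$

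For the pullback estimate, write $f = \rho(\gamma)$ and choose $\beta \succ \alpha$ realizing $\mathcal{G}_{0}(\beta) = S_{0}(\alpha)$. The plan is to observe that $f^{-1}\beta := \{\ f^{-1} B\ \vert\ B \in \beta\ \}$ is again an open cover of $V$ (here one only uses that $f$ maps $V$ into $V$, so the preimage of a cover still covers, whether $f$ is a homeomorphism or merely an endomorphism), that $f^{-1}\beta \succ f^{-1}\alpha$ because $B \subseteq A$ forces $f^{-1}B \subseteq f^{-1}A$, and that the number of nonempty members of $f^{-1}\beta$ cannot exceed $\lvert \beta \rvert$. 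Hence $S_{0}(f^{-1}\alpha) \leq \mathcal{G}_{0}(f^{-1}\beta) \leq \mathcal{G}_{0}(\beta) = S_{0}(\alpha)$. I expect this pullback step to be the main obstacle: in the group case $f$ is a homeomorphism and one even obtains equality, whereas in the semigroup case $f$ is only a continuous endomorphism, so one must check that the argument invokes nothing beyond continuity and the covering property — which indeed it does not.

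For the second assertion the plan is to feed the stage bound into the definitions of Section \ref{control-convergence}, under the dictionary of Remark \ref{repetition}, where $c(\lvert F(n) \rvert) = \lvert F(n) \rvert$ for the standard sequence. Taking logarithms in $S_{0}(\alpha_{F(n)}) \leq S_{0}(\alpha)^{\lvert F(n) \rvert}$ and dividing by $\lvert F(n) \rvert$ gives $\log S_{0}(\alpha_{F(n)}) / \lvert F(n) \rvert \leq \log S_{0}(\alpha)$ for every $n$, so passing to the upper limit yields $\text{ent}_{0}^{\uparrow}(\alpha, \Gamma, \rho) \leq \log S_{0}(\alpha)$. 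Finally I invoke the comparison $\text{ent}_{k}^{\uparrow} \leq (k+1)\ \text{ent}_{0}^{\uparrow}$ established inside the proof of Theorem \ref{controlling-growth}, which combined with the previous inequality gives
$$\text{ent}_{k}^{\uparrow}(\alpha, \Gamma, \rho) \leq (k+1)\ \text{ent}_{0}^{\uparrow}(\alpha, \Gamma, \rho) \leq (k+1)\log S_{0}(\alpha),$$
as claimed.
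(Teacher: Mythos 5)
Your proposal is correct and is essentially the paper's own argument: the paper obtains the stage bound by iterating the one-step estimate $S_{0}(\alpha_{F(n)}) \leq \mathcal{G}_{0}(\beta_{0,F(n-1)})\, S_{0}(\alpha)^{\lvert F(n)\setminus F(n-1)\rvert}$, which comes from the submultiplicativity of $S_{0}$ recorded in Lemma \ref{orstein-weiss} and is exactly your product bound over $F(n)$ unrolled, and it then gets the entropy bound from the comparison $\text{ent}_{k}^{\uparrow} \leq (k+1)\,\text{ent}_{0}^{\uparrow}$ inside Theorem \ref{controlling-growth}, just as you do. The one point where you go beyond the paper is in spelling out the pullback estimate $S_{0}(\rho(\gamma)^{-1}\alpha) \leq S_{0}(\alpha)$ (with the observation that only continuity is needed, so it holds for semigroup actions, with equality in the group case), a step the paper uses tacitly when it writes the factor $S_{0}(\alpha)^{\lvert F(n)\setminus F(n-1)\rvert}$.
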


\begin{remark}    \label{sensibility}
The sensibility of the simplicial growth with respect
to the initial condition $\alpha$ can be grasped thanks to Proposition \ref{stage-bound}.
Indeed, for every $n$ and $k$ we have 
$$ S_{k}( \alpha_{F(n)})  \leq   S_{0}(\alpha)^{(k+1) \lvert F(n) \rvert} \ .$$
\end{remark}


\subsection{Convergence of nerves: e-systems} \label{expansivity}

Let $\rho : \Gamma \to \text{Map}(V, V)$ be a representation  of the group (or semigroup)
$\Gamma$  acting on $V$, and choose some metric $d_{V}$ on $V$. We say
$(V , \Gamma, \rho)$ is \textbf{expansive}, or  has \textbf{property-e},
if there exists a constant $\epsilon$ strictly larger than zero, such that for every 
$u$ different from $v$ there exists some $\gamma$ in $\Gamma$ with
$\rho(\gamma)^{\ast}d_{V}( u , v)   := d_{V}(\rho(\gamma) u , \rho(\gamma)  v)$ larger than $\epsilon$.


\begin{remark}
Every $\epsilon$ that satisfies the condition given before is called an e-constant
for $(V , \Gamma, \rho)$. The e-constants depend on the given metric $d_{V}$, but the
existence  of those  constants does not (see Lemma \ref{expansive-diameter}), and therefore one 
can omit the metric and say that
$(\Gamma, \rho)$ acts expansively on $V,$ or that $( V , \Gamma, \rho)$ has property-e.
\end{remark}

Given $\alpha = \lbrace\  A_{i}\  \lvert\   i \in I\     \rbrace$ in $\mathcal{C}_{V}$ we say that it is a
\textbf{generator} for $(V , \Gamma, \rho),$ or a generating cover, if for every array
$\lbrace\  i(\gamma)\  \lvert\  \gamma \in \Gamma\  \rbrace$
in $I$ the intersection
$$\bigcap_{\gamma \in \Gamma}  \rho(\gamma)^{-1} A_{i(\gamma)} $$
contains at most one point. Thus if $\alpha$ is a generator for $(V , \Gamma, \rho)$ then the
maximum of the diameter of all the open sets
making up $\alpha_{F}$ decreases as $F$ increases, and it goes  to zero as $F$ exhausts $\Gamma$.

\bigskip

The next Lemma provides a  rough relation between the e-constant and the diameter
of a generator (see \cite{Wal}).


\begin{lemma}  \label{expansive-diameter}
Assume that $(V, \Gamma, \rho)$ is expansive. Let $\epsilon$ be some e-constant for some  metric
on $V$, say $d_{V}$. If $\alpha = \{\ A_{i}\ \vert\ i \in I\ \} \in \mathcal{C}_{V}$ is such that 
the diameter of each $A_{i}$ is at most $\epsilon$, then $\alpha$ is a generator for $(V, \Gamma, \rho)$.
\end{lemma}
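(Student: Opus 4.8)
The plan is to argue by contraposition: I will show that if $\alpha$ is \emph{not} a generator, then it must contain an open set whose diameter exceeds the e-constant $\epsilon$, which contradicts the hypothesis. Suppose $\alpha = \{\ A_{i}\ \vert\ i \in I\ \}$ fails to be a generator. By the definition of a generator, there exists an array $\{\ i(\gamma)\ \vert\ \gamma \in \Gamma\ \}$ in $I$ for which the intersection
$$\bigcap_{\gamma \in \Gamma} \rho(\gamma)^{-1} A_{i(\gamma)}$$
contains at least two distinct points, say $u \neq v$. First I would unwind what this membership means: for every $\gamma$ in $\Gamma$ both $\rho(\gamma) u$ and $\rho(\gamma) v$ lie in the single open set $A_{i(\gamma)}$, which belongs to $\alpha$ and hence has diameter at most $\epsilon$. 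Consequently $d_{V}(\rho(\gamma) u, \rho(\gamma) v) \leq \epsilon$ for \emph{every} $\gamma$ in $\Gamma$.

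Next I would confront this with the expansivity hypothesis. Since $(V, \Gamma, \rho)$ has property-e with e-constant $\epsilon$, the pair of distinct points $u \neq v$ must admit some $\gamma$ in $\Gamma$ with $\rho(\gamma)^{\ast} d_{V}(u, v) = d_{V}(\rho(\gamma) u, \rho(\gamma) v) > \epsilon$. This directly contradicts the uniform bound $d_{V}(\rho(\gamma) u, \rho(\gamma) v) \leq \epsilon$ obtained in the previous step. Therefore no such array can exist with a multi-point intersection, and $\alpha$ is a generator.

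The argument is essentially a direct translation between the set-theoretic formulation (intersections containing at most one point) and the metric formulation (separation under the action), so there is no serious obstacle; the only point demanding mild care is the strict-versus-nonstrict inequality. The expansivity condition yields \emph{strict} separation ($> \epsilon$), whereas the diameter hypothesis on $\alpha$ gives a \emph{nonstrict} bound ($\leq \epsilon$), so the two are genuinely incompatible and the contradiction is clean; had the diameter hypothesis used a strict bound together with a nonstrict e-condition, one would need to be more delicate. I would also remark, for completeness, that the semigroup case requires no modification: the preimage convention $\rho(\gamma)^{-1} A = V \cap \{\ v\ \vert\ \rho(\gamma) v \in A\ \}$ from Section \ref{dynamics} is exactly what makes the statement ``$u$ lies in $\rho(\gamma)^{-1} A_{i(\gamma)}$'' equivalent to ``$\rho(\gamma) u$ lies in $A_{i(\gamma)}$'', so the membership unwinding in the first step is valid whether $\Gamma$ is a group or a semigroup.
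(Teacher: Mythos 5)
Your proof is correct and follows essentially the same route as the paper's own argument: assume two points lie in a common intersection $\bigcap_{\gamma}\rho(\gamma)^{-1}A_{i(\gamma)}$, deduce that their images stay within distance $\epsilon$ under every $\rho(\gamma)$, and conclude from the (strict) e-constant property that the points coincide. Your additional remarks on the strict-versus-nonstrict inequality and on the semigroup preimage convention are sound refinements of the same idea, not a different approach.
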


\begin{proof}
Assume that we have a pair of points $(u,v)$ in $V$ that belong to 
$\bigcap_{\gamma \in F} \rho(\gamma)^{-1} A_{i} $ for some $i$ in $I,$ this
for every  subset $F$ of $\Gamma$. Then for every $\gamma$ in $F$
the distance between $\rho(\gamma) u$ and $\rho(\gamma) v$ is at most $\epsilon$,
the diameter of $A_{i}$; since $\epsilon$ is an e-constant then $u$ and $v$ coincide,
and $\alpha$ is a generator.
\end{proof}


\begin{remark}
In Theorem \ref{bishop-expansive} we will see that it is natural to estimate, for a fixed metric $d_{V}$
on $V$, the largest e-constant. In Corollary \ref{e-constant}, an upper bound
will be provided if $d = d_{g}$ for a Riemannian metric  $g$ that is regular enough.
\end{remark}


Our next aim is to observe that a generator should be considered as being a good initial 
condition to reconstruct the skeleton of
$V$ using $(\Gamma, \rho)$; in other words, a generator \textbf{generates} a simplicial complex
that is an acceptable approximation of the space.

For those purposes let $\{  F(n) \}_{n \in \mathbb{N}}$ be an increasing sequence  
exhausting $\Gamma$, not necessarily amenable. If $\alpha$ is a generator for $( V , \Gamma, \rho)$
then we  have an infinite sequence of simplicial complexes and mappings making up a
directed  set $( K(\alpha_{F(n)}) , T_{n}  )_{n \in \mathbb{N}}$ that is convergent in the sense
of Section \ref{ponomarev}. From Proposition \ref{nerve-thread} 
and Theorem \ref{nerve-convergence} we infer:


\begin{theorem}  \label{generator-generates}
Assume that $(V , \Gamma, \rho)$ has property-e, that $\alpha$ is a generator for $(V , \Gamma, \rho)$. 
Then
\begin{enumerate}
\item When $F$ exhausts
$\Gamma$  the  complex  $K(\alpha_{F})$ and $V$
are homeomorphic.
\item If $(V,g)$ is Riemannian, closed and smooth, then for every positive 
$\epsilon$ and every integer $k$ we can find a  subset $F \equiv F(\epsilon)$ of $\Gamma$ 
such that the minimizer $\beta_{k,F}$ of
$S_{k}(\alpha_{F})$ has the following property: there exists a Lipschitz map  sending
the polyhedral current corresponding to an  Euclidean realization of $K(\beta_{k,F})$ into 
$\mathcal{D}_{\ast}(V)$, leaving it at a distance not bigger  than $\epsilon$, with respect to
$\mathbf{M}_{g}$, from the current $\mathcal{V}$ associated to $V$. If the
dimension  of $V$ is not $4$, then the  map is independent of the
Lipschitz structure on $V$. 
\end{enumerate}
\end{theorem}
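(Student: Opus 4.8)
The plan is to reduce both assertions to the convergence results already established in Sections \ref{ponomarev} and \ref{metric}, the only genuinely new ingredient being the translation of the dynamical generator hypothesis into the hypotheses required there. First I would fix an increasing sequence $\{ F(n) \}_{n \in \N}$ exhausting $\Gamma$ and set $\alpha_{n} := \alpha_{F(n)}$. Since $F(n) \subseteq F(n+1)$, one has $\alpha_{n+1} = \alpha_{n} \cap \bigcap_{\gamma \in F(n+1) \setminus F(n)} \rho(\gamma)^{-1}\alpha$, so $\alpha_{n+1} \succ \alpha_{n}$ and $( K(\alpha_{n}), T_{n} )_{n \in \N}$ is a directed set of nerves in the sense of Section \ref{ponomarev}.

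For the first assertion I would verify that this sequence is convergent. A member of $\alpha_{n}$ has the form $\bigcap_{\gamma \in F(n)} \rho(\gamma)^{-1} A_{i(\gamma)}$, and any nested chain of such members (a thread) has as its total intersection a set $\bigcap_{\gamma \in \Gamma} \rho(\gamma)^{-1} A_{i(\gamma)}$, which contains at most one point precisely because $\alpha$ is a generator. Hence every member collapses to at most a point as $n \to \infty$, the sequence is convergent, and Proposition \ref{nerve-thread} yields the homeomorphism between the limit of $K(\alpha_{F})$ and $V$.

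For the second assertion I would specialize to the Riemannian setting and invoke Theorem \ref{nerve-convergence}. Its hypothesis is met verbatim: with $\alpha_{n} = \alpha_{F(n)}$, the maximal diameter of the members of $\alpha_{F(n)}$ decreases and tends to zero as $F(n)$ exhausts $\Gamma$, which is the metric content of the generator property (compatible with Lemma \ref{expansive-diameter}). Thus $\{ \alpha_{n} \}$ is a refining sequence whose members shrink to diameter zero, and Theorem \ref{nerve-convergence} produces, for the given $k$ and $\epsilon > 0$, an index $m$, a minimizer $\beta_{k,m}$ of $S_{k}(\alpha_{m})$, and a Lipschitz map $y_{m}$ with $\mathbf{M}_{g}( y_{m\sharp} \lvert K(\beta_{k,m}) \rvert - \mathcal{V} ) < \epsilon$. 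Taking $F \equiv F(\epsilon) := F(m)$ and $\beta_{k,F} := \beta_{k,m}$ gives the statement, and the independence of the Lipschitz structure when $\dim V \neq 4$ is inherited from Theorem \ref{nerve-convergence} together with Remark \ref{lipschitz-equivalent}.

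The main obstacle lies not in any computation but in correctly matching the generator hypothesis to the hypotheses of the earlier results. In the first part one must argue set-theoretically that a maximal thread of members of the nerves corresponds to a coherent index array $\{ i(\gamma) \}_{\gamma \in \Gamma}$, so that maximal threads are in bijection with points of $V$; this is exactly where the generator condition is used in full. In the second part the delicate step is the \emph{uniform} decay of the diameters of the $\alpha_{F(n)}$, which rests on property-e (via Lemma \ref{expansive-diameter}) rather than on the bare set-theoretic generator condition, together with the fact that the minimizers $\beta_{k,m}$ remain irreducible of dimension at most $\dim V$ (Lemma \ref{irred-prop}), so that the compared currents are genuinely top-dimensional and the mass estimate is meaningful.
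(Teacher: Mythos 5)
Your proposal is correct and follows essentially the same route as the paper: the paper also fixes an exhausting sequence $\{F(n)\}_{n\in\N}$, observes that the generator property makes $(K(\alpha_{F(n)}),T_{n})_{n\in\N}$ convergent in the sense of Section \ref{ponomarev} (the members of $\alpha_{F}$ shrink to at most a point, equivalently their diameters tend to zero), and then cites Proposition \ref{nerve-thread} for the homeomorphism and Theorem \ref{nerve-convergence} for the mass estimate and the Lipschitz-structure remark. If anything, your write-up is slightly more explicit than the paper's about how threads correspond to index arrays $\{i(\gamma)\}_{\gamma\in\Gamma}$ and about where the uniform diameter decay is used.
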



As mentioned in Section \ref{dynamics}, the notion of conjugacy is an equivalence
relation in the category of $\Gamma$-spaces. It is natural to expect  that property-e,
having both a metric and a set-theoretic characterization, will be invariant under conjugation.
The next Lemma confirms this is the case:


\begin{lemma}   \label{conjugacy}
Assume that $(V, \Gamma, \rho)$ has property-e, and let $x : V \to W$ be a 
homeomorphism between $V$ and $W.$ Then $(W, \Gamma, \rho')$ is
also expansive, where $\rho'$, the induced representation of $\Gamma$ in $W$,
is given by $\rho'(\gamma) = x \cdot \rho(\gamma) \cdot x^{-1} $
for every $\gamma$ in $\Gamma$.
\end{lemma}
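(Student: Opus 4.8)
The plan is to transport the whole expansive structure from $V$ to $W$ along the homeomorphism $x$, the point being that we may equip $W$ with a metric for which $x$ is an \emph{isometry}; once this is arranged, property-e passes over to $(W, \Gamma, \rho')$ verbatim, and in fact with the \emph{same} e-constant. So the statement should follow without any quantitative loss.

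First I would fix a metric $d_{V}$ on $V$ together with an e-constant $\epsilon > 0$ witnessing property-e for $(V, \Gamma, \rho)$, as in the definition preceding Lemma \ref{expansive-diameter}. Since $W$ carries no metric a priori, I would manufacture one by pushing $d_{V}$ forward: define $d_{W}(p,q) := d_{V}(x^{-1}(p), x^{-1}(q))$ for $p,q$ in $W$. Because $x$ is a homeomorphism, $d_{W}$ is a genuine metric on $W$ inducing its topology, and by construction $x : (V,d_{V}) \to (W,d_{W})$ is an isometry. I would also record, before using it, that $\rho'$ is a bona fide representation: for $\gamma_{1}, \gamma_{2}$ in $\Gamma$ one has $\rho'(\gamma_{1}\gamma_{2}) = x\,\rho(\gamma_{1}\gamma_{2})\,x^{-1} = x\,\rho(\gamma_{1})\,x^{-1}x\,\rho(\gamma_{2})\,x^{-1} = \rho'(\gamma_{1})\,\rho'(\gamma_{2})$, so conjugation by $x$ respects the group (or semigroup) law; note that this computation uses only that $x$ is invertible, so it is insensitive to whether the $\rho(\gamma)$ are homeomorphisms or merely endomorphisms.

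The core step is then a one-line transfer of expansivity. Given $p \neq q$ in $W$, set $u := x^{-1}(p)$ and $v := x^{-1}(q)$, which are distinct points of $V$. Property-e for $(V,\Gamma,\rho)$ furnishes a $\gamma$ in $\Gamma$ with $d_{V}(\rho(\gamma)u, \rho(\gamma)v) > \epsilon$. Since $\rho'(\gamma) = x\,\rho(\gamma)\,x^{-1}$, we have $\rho'(\gamma)p = x\,\rho(\gamma)u$ and $\rho'(\gamma)q = x\,\rho(\gamma)v$, whence, using that $x$ is an isometry,
$$ d_{W}\big(\rho'(\gamma)p,\, \rho'(\gamma)q\big) = d_{V}\big(\rho(\gamma)u,\, \rho(\gamma)v\big) > \epsilon . $$
As $p \neq q$ were arbitrary, $\epsilon$ is an e-constant for $(W, \Gamma, \rho')$ relative to $d_{W}$, so $(W, \Gamma, \rho')$ is expansive.

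I do not expect a genuine obstacle here; the argument is essentially a change of variables. The only point requiring a moment's care is conceptual rather than technical: property-e is phrased with respect to a \emph{chosen} metric, while $W$ comes without one, so the proof must first supply the transported metric $d_{W}$ before the expansivity inequality can even be stated. The remark following the definition (that existence of e-constants is metric-independent, cf. Lemma \ref{expansive-diameter}) guarantees that this particular choice of metric on $W$ entails no loss of generality.
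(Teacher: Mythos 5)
Your proof is correct, but it takes a genuinely different route from the paper's. You work with the \emph{metric} characterization of property-e: you push the metric $d_{V}$ forward to $d_{W}(p,q):=d_{V}(x^{-1}p,x^{-1}q)$, so that $x$ becomes an isometry, and then the expansivity inequality transfers verbatim with the \emph{same} e-constant. The paper instead uses the \emph{set-theoretic} characterization: it takes a generator $\alpha=\{A_{i}\}$ for $(V,\Gamma,\rho)$, pushes it forward to $\alpha'=\{x(A_{i})\}$, and verifies via
$$x\Bigl(\ \bigcap_{\gamma\in\Gamma}\rho(\gamma)^{-1}A_{i(\gamma)}\ \Bigr)=\bigcap_{\gamma\in\Gamma}\rho'(\gamma)^{-1}x(A_{i(\gamma)})$$
that $\alpha'$ is a generator for $(W,\Gamma,\rho')$. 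Each approach has its merits. Yours is more self-contained: it argues straight from the definition of property-e, needs no appeal to the equivalence between the existence of a generator and expansivity (an equivalence the paper invokes but only half-proves, Lemma \ref{expansive-diameter} giving one direction), and it yields the quantitative bonus that the e-constant is preserved under the transported metric. The paper's argument, on the other hand, produces a slightly stronger and more useful output for its later purposes: it shows explicitly that the image of a generator under a conjugacy is again a generator, which is exactly what Remark \ref{kervaire} and Theorem \ref{generator-generates} exploit when comparing the evolving nerves of conjugated systems. Your one point requiring care — that $W$ carries no metric a priori, so one must first install $d_{W}$ and check it induces the topology of $W$ — is handled correctly, and your observation that the homomorphism property of $\rho'$ and the whole argument are insensitive to whether the $\rho(\gamma)$ are homeomorphisms or mere endomorphisms covers the semigroup case cleanly.
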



\begin{proof}
We use the set theoretic characterization of property-e. Assume that 
$\alpha = \{\ A_{i}\ \vert i \in I\ \}$ is a 
generator for $(V, \Gamma, \rho)$, and let $\alpha' = \{\  A'_{i} = x(A_{i})\ \vert\ i \in I\   \}$
be its image in $\mathcal{C}_{W}$. If $\lbrace\  i(\gamma)\  \lvert\  \gamma \in \Gamma\  \rbrace$
is an array in $I$ indexed by $\Gamma$, then $\bigcap_{\gamma \in \Gamma} \rho(\gamma)^{-1} A_{i(\gamma)}$
consists at most of one point, hence so does its image under $x$. 
Since $x$ is a homeomorphism
$$x (\ \bigcap_{\gamma \in \Gamma} \rho(\gamma)^{-1} A_{i(\gamma)}\ ) =  
\bigcap_{\gamma \in \Gamma}  x \cdot \rho(\gamma)^{-1} \cdot x^{-1} \cdot x (A_{i(\gamma)}) =
\bigcap_{\gamma \in \Gamma} \rho'(\gamma)^{-1}  A'_{i(\gamma)}  ,    $$
and we conclude that $\alpha'$ is a generator for $(W, \Gamma, \rho')$.
\end{proof}


\begin{remark}    \label{kervaire}
From Theorem \ref{generator-generates} and Lemma \ref{conjugacy} we infer that 
if $( V , \Gamma, \rho )$ and $( W , \Gamma, \rho')$
are  conjugated and if we know that one of them is expansive, then the simplicial complexes associated 
to the nerve of any of their generators
evolve to complexes that are homeomorphic. This cannot be strengthened to differentiable mappings 
in full generality (see also Remark \ref{lipschitz-equivalent}).

One counterexample is provided
by  expansive actions of $\mathbb{N}^{d}$ in $S^{d}$ (see Corollary 
\ref{Sd}). If $d \geq 7$ the work of M. Kervaire and J. Milnor yields a finite number 
of homeomorphic but not diffeomorphic $S^{d}$'s.

Other counterexamples are obtained if one glues these  
spheres, by connected sum, 
on manifolds that admit an expansive action (see Section \ref{geometrization}); to
the author's knowledge, this was first done by T. Farrel and L. Jones for Anosov diffeomorphisms
on $T^{d}$ (see \cite{farrell-jones}).

\end{remark}


\subsection{Choosing good generators, estimates for the e-constant} \label{good-generator}

In what follows we consider a Riemannian manifold, compact and without a boundary,
whose dimension is $d$, and whose Riemannian metric $g$ is at least of type
$C^{2}$. If $Rc(g)$ denotes the Ricci tensor of $g$, we denote by $\lambda$
the biggest real number  so that 
$$Rc(g) \geq \lambda (d-1) g$$ 
all over $V$. Let $d_{g}$ be the distance function induced by $g$; we denote by
$\lVert B_{R}(v) \rVert_{g}$ the mass (or volume) of the ball of radius $R$
centered at $v$ whenever $R$ is a positive real number and $v$ is
some point in $V$.

Let $\mathbb{S}^{d}(\lambda)$ be the simply connected space of dimension $d$ whose sectional
curvature is everywhere equal to $\lambda$; then $\lVert \widetilde{B_{R}} \rVert_{\lambda}$
denotes the mass of any ball of radius $R$ in $\mathbb{S}^{d}(\lambda)$. If
$D$ denotes the diameter of $(V, g)$ then by
Bishop's  comparison (see \cite{besse} or \cite{verde} for example)
whenever $R \leq D$ and $t \leq 1$
one has, for every point $v$ in $V$, the estimate 
$$\lVert B_{R}(v) \rVert_{g}\ /\  \lVert B_{t R}(v) \rVert_{g} \leq 
\lVert \widetilde{B_{R}} \rVert_{\lambda}\ /\ \lVert \widetilde{B_{tR}} \rVert_{\lambda} .$$

Let $C^{V}_{R}$ denote the minimal number of balls of radius $R$ needed to cover $(V,g)$;
it is not difficult to see that such a number of balls is not bigger than the largest value of
$\lVert B_{D}(v) \rVert_{g}\ /\  \lVert B_{R/2}(v) \rVert_{g}$ as $v$ varies in $V$.
Recalling Lemma \ref{expansive-diameter} we conclude:


\begin{theorem} \label{bishop-expansive}
Assume that $(V, \Gamma, \rho)$ has property-e, and let $\epsilon = \epsilon(g)$ be an 
expansivity constant for the distance function
$d_{g}$  induced by $g$. If $Rc(g) \geq \lambda (d-1) g,$ where $d$ is the dimension
of $V$, and the diameter of $(V, g)$
is $D$, then  there exists a generator for 
$(V, \Gamma, \rho)$ whose cardinality is at most
$ \lVert \widetilde{B_{D}} \rVert_{\lambda}\ /\ \lVert \widetilde{B_{\epsilon/4}} \rVert_{\lambda} $.
\end{theorem}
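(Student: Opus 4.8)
The plan is to produce a generator directly as a covering of $V$ by small metric balls, and then bound its cardinality by a volume-comparison argument. By Lemma \ref{expansive-diameter}, any open cover all of whose members have diameter at most $\epsilon$ is automatically a generator for $(V,\Gamma,\rho)$. Since a ball $B_{\epsilon/2}(v)$ has diameter at most $\epsilon$ by the triangle inequality, it suffices to cover $(V,g)$ by balls of radius $\epsilon/2$: the resulting cover $\alpha$ is a generator, and its cardinality may be taken equal to $C^{V}_{\epsilon/2}$, the minimal number of balls of radius $\epsilon/2$ needed to cover $(V,g)$. I note that $\epsilon<D$ holds automatically, since expansivity forces distinct points to be separated to a distance exceeding $\epsilon$ under some $\rho(\gamma)$, while every distance on $V$ is at most the diameter $D$; in particular $\epsilon/4<D$, which I use below.

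Next I would invoke the covering bound recalled just before the statement, namely $C^{V}_{R}\le \max_{v\in V}\lVert B_{D}(v)\rVert_{g}/\lVert B_{R/2}(v)\rVert_{g}$, applied with $R=\epsilon/2$ so that $R/2=\epsilon/4$. This reduces the problem to estimating the volume ratio $\lVert B_{D}(v)\rVert_{g}/\lVert B_{\epsilon/4}(v)\rVert_{g}$ uniformly in $v$. Two observations complete the reduction. First, since $D$ is the diameter of $(V,g)$, every point of $V$ lies within distance $D$ of any fixed center, so $B_{D}(v)=V$ and the numerator is just the total volume $\lVert V\rVert_{g}$, independent of $v$. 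Second, Bishop's comparison, in the form stated above, bounds the ratio of the volume of a ball of radius $D$ to that of a concentric ball of radius $tD$ (for $t\le 1$) by the corresponding ratio computed in the model space $\mathbb{S}^{d}(\lambda)$, and this model ratio is independent of the center since $\mathbb{S}^{d}(\lambda)$ is homogeneous.

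Setting $t=\epsilon/(4D)$, so that $tD=\epsilon/4$ and $t<1/4\le 1$ by the observation above, Bishop's comparison yields, for every $v$,
$$\frac{\lVert B_{D}(v)\rVert_{g}}{\lVert B_{\epsilon/4}(v)\rVert_{g}}\ \le\ \frac{\lVert \widetilde{B_{D}}\rVert_{\lambda}}{\lVert \widetilde{B_{\epsilon/4}}\rVert_{\lambda}}.$$
Chaining the three inequalities gives
$$\lvert\alpha\rvert\ \le\ C^{V}_{\epsilon/2}\ \le\ \max_{v\in V}\frac{\lVert B_{D}(v)\rVert_{g}}{\lVert B_{\epsilon/4}(v)\rVert_{g}}\ \le\ \frac{\lVert \widetilde{B_{D}}\rVert_{\lambda}}{\lVert \widetilde{B_{\epsilon/4}}\rVert_{\lambda}},$$
which is exactly the asserted bound on the cardinality of the generator. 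The only genuinely delicate point is the application of Bishop's comparison: one must check that its hypotheses ($R=D$ and $t\le 1$) are met and, more substantively, that the lower Ricci bound $Rc(g)\ge\lambda(d-1)g$ is precisely what licenses the comparison of the volume ratio with the constant-curvature model $\mathbb{S}^{d}(\lambda)$. Everything else is bookkeeping with the triangle inequality and the definition of diameter.
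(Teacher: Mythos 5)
Your proof is correct and takes essentially the same route as the paper, which presents the theorem as an immediate consequence of the three ingredients you assemble: covering $V$ by balls of radius $\epsilon/2$ (generators by Lemma \ref{expansive-diameter}), the packing bound $C^{V}_{\epsilon/2} \leq \max_{v} \lVert B_{D}(v)\rVert_{g}/\lVert B_{\epsilon/4}(v)\rVert_{g}$, and Bishop's comparison applied with $R=D$ and $t=\epsilon/(4D)$. Your explicit verification that $\epsilon < D$ (so that $t\leq 1$) is a detail the paper leaves tacit, but it does not constitute a different method.
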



Abbreviate $\lVert \widetilde{B_{D}} \rVert_{\lambda}\ /\ \lVert \widetilde{B_{\epsilon/4}} \rVert_{\lambda}$
by $\Theta(\lambda, D, \epsilon)(g) \equiv \Theta(\lambda(g), D(g), \epsilon(g)),$ and observe that
$\Theta(\lambda, D, \epsilon)(g)$ is invariant under scalings of $g$. In particular, if $\lambda$
is larger than zero, then $\Theta(\lambda, D, \epsilon)$ is not bigger than
$\Theta(\lambda, \sqrt{\frac{\pi^{2}}{(d-1) \lambda}} , \epsilon)$
by Bonnet-Myers' comparison  (see \cite{besse} or \cite{verde} again).

It becomes clear that, for a fixed $g$, better estimates for the expansivity
constant $\epsilon(g)$ will improve the  upper estimate for the minimal number
of zero simplices  that the nerve associated to a generator for  $(V, \Gamma, \rho)$ 
can have; we denote that minimal number by $\lvert \triangle_{0}(V, \Gamma, \rho)  \rvert$. 
The next definition provides an upper bound for $\lvert \triangle_{0}(V, \Gamma, \rho)  \rvert$.

 
\begin{definition}   \label{expansive-invariant}
The real number $\Theta(V, \Gamma, \rho)$ is defined as
the best lower bound for $\Theta(\lambda, D, \epsilon)(g)$ as $g$ 
varies within the Riemannian
metrics on $V$ of type $C^{2}$.
\end{definition}


To obtain a lower bound for $\lvert \triangle_{0}(V, \Gamma, \rho)  \rvert$,
consider the F\o lner sequence
$\{ F(n) \}_{n \in \mathbb{N}} $  exhausting $\Gamma$.
Let $\zeta \in \mathcal{C}_{V}$
have $\delta = \delta(g)$ as a Lebesgue number (for some metric $d_{g}$ on $V$); 
if $\alpha$ is a generator for $(V, \Gamma, \rho)$ then for $t$
large enough the open cover $\alpha_{F(t)}$ has a diameter not bigger than $\delta$,
hence  $\alpha_{F(t)}$ is finer than $\zeta$, therefore by the definition 
of  $S_{0}$ (see Section \ref{complexity-irred})  the estimate
$$\text{ent}_{0}(\zeta, \Gamma, \rho) \leq \text{ent}_{0}(\alpha_{F(t)}, \Gamma, \rho) = 
\text{ent}_{0}(\alpha, \Gamma, \rho) $$
follows, hence:

\begin{lemma}  \label{sup-generator} 
If $(V, \Gamma, \rho)$ has property-e, then the supremum of $\text{ent}_{0}(\zeta, \Gamma, \rho)$ as $\zeta$ varies in
$\mathcal{C}_{V}$ is a maximum, denoted by $\text{ent}_{0}(V, \Gamma, \rho)$, and is attained 
when $\zeta$ is a generating cover.
\end{lemma}

From Theorem \ref{bishop-expansive}, Definition \ref{expansive-invariant}, 
Proposition \ref{stage-bound} and Lemma \ref{orstein-weiss} we get a 
relation between $\Theta(V, \Gamma,  \rho),\ \lvert \triangle_{0}(V, \Gamma, \rho)  \rvert$
and $\text{ent}_{0}(V, \Gamma, \rho)$.


\begin{theorem}  \label{entropy-cardinal}
Assume that $(V, \Gamma, \rho)$ has property-e. Then 
$$ e^{\text{ent}_{0}(V, \Gamma, \rho)} \leq \lvert \triangle_{0}(V, \Gamma, \rho)  \rvert  
\leq \Theta(V, \Gamma,  \rho).$$
\end{theorem}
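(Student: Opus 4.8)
The plan is to prove the two inequalities separately, reading the middle quantity $\lvert \triangle_{0}(V, \Gamma, \rho) \rvert$ as the least cardinality of a generating cover (recall that the nerve of a cover carries exactly one $0$-simplex per member, so the number of $0$-simplices equals the cardinality of the cover).

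For the upper bound $\lvert \triangle_{0}(V, \Gamma, \rho) \rvert \leq \Theta(V, \Gamma, \rho)$, I would simply invoke Theorem \ref{bishop-expansive}: for each $C^{2}$ metric $g$ on $V$, with e-constant $\epsilon(g)$, Ricci lower bound $\lambda = \lambda(g)$ and diameter $D = D(g)$, there exists a generator whose cardinality is at most $\Theta(\lambda, D, \epsilon)(g)$. Consequently the minimal cardinality of a generator, namely $\lvert \triangle_{0}(V, \Gamma, \rho) \rvert$, satisfies $\lvert \triangle_{0}(V, \Gamma, \rho) \rvert \leq \Theta(\lambda, D, \epsilon)(g)$ for every such $g$. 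Taking the infimum over all $C^{2}$ metrics and recalling Definition \ref{expansive-invariant} yields the claim.

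For the lower bound $e^{\text{ent}_{0}(V, \Gamma, \rho)} \leq \lvert \triangle_{0}(V, \Gamma, \rho) \rvert$, fix a generator $\alpha$ realizing the minimal cardinality $N := \lvert \triangle_{0}(V, \Gamma, \rho) \rvert$. First I would record that, by Lemma \ref{sup-generator}, every generating cover attains the maximal $0$-entropy, so $\text{ent}_{0}(\alpha, \Gamma, \rho) = \text{ent}_{0}(V, \Gamma, \rho)$. Next, Proposition \ref{stage-bound} gives, along any F\o lner sequence $\{ F(n) \}_{n \in \mathbb{N}}$, the recursion bound $S_{0}(\alpha_{F(n)}) = \mathcal{G}_{0}(\beta_{0, F(n)}) \leq S_{0}(\alpha)^{\lvert F(n) \rvert}$; dividing by $\lvert F(n) \rvert$, taking logarithms and passing to the limit (legitimate by Lemma \ref{orstein-weiss}) produces $\text{ent}_{0}(\alpha, \Gamma, \rho) \leq \log S_{0}(\alpha)$. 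Finally, since $\alpha \succ \alpha$ trivially, $S_{0}(\alpha) = \min_{\beta \succ \alpha} \mathcal{G}_{0}(\beta) \leq \mathcal{G}_{0}(\alpha) = \lvert \alpha \rvert = N$. Chaining these estimates gives $\text{ent}_{0}(V, \Gamma, \rho) \leq \log N$, equivalently $e^{\text{ent}_{0}(V, \Gamma, \rho)} \leq N$.

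I expect no single deep obstacle, since the heavy analytic content already sits in the cited results: Bishop comparison inside Theorem \ref{bishop-expansive}, the Orstein--Weiss convergence of Lemma \ref{orstein-weiss}, and the subadditive recursion of Proposition \ref{stage-bound}. The one point deserving care is the invariance of the $0$-entropy across generators. Here I would make explicit that any refinement of a generator is again a generator (if $\beta \succ \alpha$ then each member of $\beta$ sits inside a member of $\alpha$, so the corresponding intersections $\bigcap_{\gamma} \rho(\gamma)^{-1} B_{i(\gamma)}$ only shrink and still contain at most a point). This guarantees both that the minimal-cardinality generator $\alpha$ genuinely realizes $\text{ent}_{0}(V, \Gamma, \rho)$ and that the inequality $S_{0}(\alpha) \leq N$ is not undercut by an intervening irreducible refinement having fewer vertices. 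With that observation secured, the three estimates compose immediately into the stated two-sided bound.
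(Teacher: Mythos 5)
Your proof is correct and follows essentially the same route as the paper, which derives the theorem directly from Theorem \ref{bishop-expansive} together with Definition \ref{expansive-invariant} (upper bound) and from Proposition \ref{stage-bound} with $k=0$, Lemma \ref{orstein-weiss} and Lemma \ref{sup-generator} (lower bound), exactly the chain $\text{ent}_{0}(V,\Gamma,\rho)=\text{ent}_{0}(\alpha,\Gamma,\rho)\leq \log S_{0}(\alpha)\leq \log \mathcal{G}_{0}(\alpha)$ you spell out for a minimal generator $\alpha$. Your closing observation that refinements of generators are again generators is harmless but not needed, since Lemma \ref{sup-generator} already gives the entropy invariance and the inequality $S_{0}(\alpha)\leq \mathcal{G}_{0}(\alpha)$ holds by definition.
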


\begin{remark}
The sense and value of the Theorems for e-systems  depend on the taste of the reader: 
\begin{enumerate}
\item If she/he is 
interested in  constructing $V$ starting from a simplicial complex of lower
complexity, it has been proved in Theorem \ref{generator-generates} 
that a method to achieve such a task is to find a group or semigroup
$\Gamma$ together with a representation $\rho$ such that $(V, \Gamma, \rho)$
has the expansive property. 

\item Once $\Gamma$ and $\rho$ have been found, bounds on the $0$-simplices
of the initial  complex are provided by Theorem \ref{entropy-cardinal}, assuming
that $\text{ent}_{0}(V, \Gamma, \rho)$ and/or  $\Theta(V, \Gamma,  \rho)$ have been computed.

\item From  another perspective, if  upper bounds for $\Theta(V, \Gamma,  \rho)$ are available, 
she/he has upper bounds for $\text{ent}_{0}(V, \Gamma, \rho)$, the rate at which the 
approximation of $V$ is achieved, and conversely.
\end{enumerate}
\end{remark}

\bigskip

Recall that the e-constant is the minimal distance that any two points in $V$
become separated under the action of $(\Gamma, \rho)$ for  a given distance function 
(not necessarily arising from a Riemannian metric) on $V$, and it is
of interest to know its order of magnitude (how large it is).
To achieve that, observe that  if $\lambda$ and $d \geq 2$ are fixed, the function 
$\lVert \widetilde{B_{R}} \rVert_{\lambda}$
is a strictly increasing  function of $R$. 

\bigskip

Using Theorems \ref{bishop-expansive} and \ref{entropy-cardinal} we can state:

\begin{corollary}   \label{e-constant}
Let $g$ be a metric of type $C^{2}$, and let $\lambda$ be the biggest real number
satisfying $Rc(g) \geq \lambda (d-1) g$ all over $V$. Then every $\epsilon$
such that
$$\lVert \widetilde{B_{\epsilon/4}} \rVert_{\lambda}\  \leq\
\frac{\lVert \widetilde{B_{D}} \rVert_{\lambda}}{\exp( \text{ent}_{0}(V, \Gamma, \rho))}$$
is an e-constant for the system $(V, \Gamma, \rho)$ with respect to
the distance $d_{g}$.
\end{corollary}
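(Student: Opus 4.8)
The plan is to read the volume hypothesis as the single scalar inequality $e^{\text{ent}_{0}(V,\Gamma,\rho)} \leq \lVert \widetilde{B_{D}}\rVert_{\lambda}/\lVert \widetilde{B_{\epsilon/4}}\rVert_{\lambda} = \Theta(\lambda, D, \epsilon)(g)$, obtained by clearing the denominator, and then to show this places $\epsilon$ inside the range of admissible e-constants for $d_{g}$. The guiding idea is to reduce ``$\epsilon$ is an e-constant'' to ``a cover of diameter $\leq \epsilon$ generates'' and, via Lemma \ref{sup-generator}, to further reduce the latter to a compatibility between the generating entropy $\text{ent}_{0}(V,\Gamma,\rho)$ and the number of $0$-simplices such a cover can carry. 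Throughout I would use that, with $\lambda$ and $d \geq 2$ fixed, the model volume $R \mapsto \lVert \widetilde{B_{R}}\rVert_{\lambda}$ is continuous and strictly increasing, so that every inequality between such volumes is equivalent to the corresponding inequality between radii; this is what lets me convert the hypothesis into a statement about $\epsilon$ itself.

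The quantitative heart of the argument is a covering bound. Cover $(V,g)$ by metric balls of radius $\epsilon/2$, and let $\zeta$ be the minimal such cover, so that $\zeta$ has diameter at most $\epsilon$ and $\mathcal{G}_{0}(\zeta) = C^{V}_{\epsilon/2}$. By the covering estimate preceding Theorem \ref{bishop-expansive} together with Bishop's comparison, $C^{V}_{\epsilon/2} \leq \max_{v}\ \lVert B_{D}(v)\rVert_{g}/\lVert B_{\epsilon/4}(v)\rVert_{g} \leq \lVert \widetilde{B_{D}}\rVert_{\lambda}/\lVert \widetilde{B_{\epsilon/4}}\rVert_{\lambda} = \Theta(\lambda,D,\epsilon)(g)$. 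Thus the rearranged hypothesis reads $e^{\text{ent}_{0}(V,\Gamma,\rho)} \leq \Theta(\lambda,D,\epsilon)(g)$, which is exactly the compatibility between the lower bound $e^{\text{ent}_{0}(V,\Gamma,\rho)} \leq \lvert \triangle_{0}(V,\Gamma,\rho)\rvert$ of Theorem \ref{entropy-cardinal} and the vertex count of $\zeta$. Running this in reverse: for any genuine e-constant $\epsilon'$, Theorem \ref{bishop-expansive} yields a generator with at most $\Theta(\lambda,D,\epsilon')(g)$ vertices, whence $e^{\text{ent}_{0}(V,\Gamma,\rho)} \leq \Theta(\lambda,D,\epsilon')(g)$; letting $\epsilon'$ rise to the supremum $\epsilon^{\ast}$ of the e-constants and invoking the monotonicity above identifies the critical radius $R^{\ast}$ with $\lVert \widetilde{B_{R^{\ast}}}\rVert_{\lambda} = \lVert \widetilde{B_{D}}\rVert_{\lambda}/e^{\text{ent}_{0}(V,\Gamma,\rho)}$, and shows the hypothesis forces $\epsilon/4 \leq R^{\ast}$. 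I would then argue by contradiction: if $\epsilon$ satisfied the hypothesis yet failed to be an e-constant, there would exist $u \neq v$ with $\rho(\gamma)^{\ast}d_{g}(u,v) \leq \epsilon$ for all $\gamma$, so that $\zeta_{F}$ never separates $u$ from $v$ and $\zeta$ is not a generator, while Lemma \ref{sup-generator} and the entropy-realization forced by $e^{\text{ent}_{0}(V,\Gamma,\rho)} \leq \Theta(\lambda,D,\epsilon)(g)$ require a diameter-$\epsilon$ cover of this vertex count to already attain $\text{ent}_{0}(V,\Gamma,\rho)$, hence to generate.

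The main obstacle is precisely this last passage, the reverse implication. The chain of comparisons is a priori only an upper bound on the optimal e-constant, so the argument must use Bishop's inequality in its sharp direction and the strict monotonicity of $R \mapsto \lVert \widetilde{B_{R}}\rVert_{\lambda}$ to pin $\epsilon/4$ at or below $R^{\ast}$ without losing the threshold, and then upgrade the quantitative count ``$\zeta$ has at most $\Theta(\lambda,D,\epsilon)(g)$ vertices and realizes $\text{ent}_{0}(V,\Gamma,\rho)$'' into the qualitative conclusion ``$\zeta$ is a generator'' — essentially a converse to Lemma \ref{expansive-diameter}. Making the contradiction airtight is where the set-theoretic characterization of property-e (Lemmas \ref{expansive-diameter} and \ref{sup-generator}) must be fused with the metric volume bounds rather than applied separately; I expect this synthesis, not the inequality bookkeeping, to be the delicate point.
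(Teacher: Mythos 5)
Your proposal has a genuine gap, and it sits exactly where you flagged it: the ``reverse implication''. Everything in your first two paragraphs --- the cover $\zeta$ by balls of radius $\epsilon/2$, Bishop's comparison giving $\mathcal{G}_{0}(\zeta) \leq \Theta(\lambda,D,\epsilon)(g)$, and, for any genuine e-constant $\epsilon'$, the chain $e^{\text{ent}_{0}(V,\Gamma,\rho)} \leq \lvert \triangle_{0}(V,\Gamma,\rho)\rvert \leq \Theta(\lambda,D,\epsilon')(g)$ --- reproduces precisely what the paper itself does: Corollary \ref{e-constant} carries no proof beyond the phrase ``Using Theorems \ref{bishop-expansive} and \ref{entropy-cardinal}'', and those theorems, combined with the strict monotonicity of $R \mapsto \lVert \widetilde{B_{R}}\rVert_{\lambda}$, prove only that \emph{every e-constant satisfies the displayed volume inequality}, i.e.\ the inequality is a necessary condition, an upper bound on the admissible $\epsilon$'s. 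Your closing contradiction argument, which is what the literal statement (inequality $\Rightarrow$ e-constant) would require, rests on three inferences that are all unavailable: (i) Lemma \ref{sup-generator} says generators attain $\text{ent}_{0}(V,\Gamma,\rho)$, not that a cover compatible with, or even attaining, that value must generate --- you invoke its converse; (ii) Lemma \ref{expansive-diameter} says a cover of diameter at most an e-constant generates --- again you need the converse, which is not established anywhere; (iii) the step ``there exist $u \neq v$ with $d_{g}(\rho(\gamma)u,\rho(\gamma)v) \leq \epsilon$ for all $\gamma$, hence $\zeta_{F}$ never separates $u$ from $v$'' is false: two points at mutual distance $\leq \epsilon$ need not lie in a common element of a cover by $\epsilon/2$-balls, so the unseparated pair does not witness that $\zeta$ fails to be a generator.

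Moreover, no ``synthesis'' can close this gap, because the implication you are trying to prove is false as stated. Take $V = \R^{2}/(2\pi\Z)^{2}$ with the flat metric and the Anosov automorphism induced by the paper's matrix $A$ (Section on dimension two): this system has property-e, with $\lambda = 0$, $D = \sqrt{2}\,\pi$, and $\exp(\text{ent}_{0}) = (3+\sqrt{5})/2 < 3$. The hypothesis $\pi(\epsilon/4)^{2} \leq \pi D^{2}/\exp(\text{ent}_{0})$ is then satisfied by every $\epsilon \leq 4D/\sqrt{(3+\sqrt{5})/2}$, a bound larger than $2D$; in particular it is satisfied by values $\epsilon > D$, and no such $\epsilon$ can ever be an e-constant, since no two points of $V$ are at distance greater than the diameter. (The failure is not an artifact of $\epsilon > D$ either: the pair with difference $(2\pi/3)(1,1)$ is periodic under $A$ and is never separated beyond $2\sqrt{2}\pi/3 < D$, yet every $\epsilon$ between $2\sqrt{2}\pi/3$ and $D$ passes the volume test.) So the only provable content --- and the content your first two paragraphs, like the paper's two cited theorems, actually deliver --- is the reversed statement: $\epsilon$ can be an e-constant for $d_{g}$ only if $\lVert \widetilde{B_{\epsilon/4}}\rVert_{\lambda} \leq \lVert \widetilde{B_{D}}\rVert_{\lambda}/\exp(\text{ent}_{0}(V,\Gamma,\rho))$. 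Had you stopped there and recorded the corollary in that direction, your argument would have matched the paper's; the attempt to obtain the converse is the genuine gap, and it is unfixable.
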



\begin{remark}
If some e-constant $\epsilon$ for a distance function $d = d_{g}$ has been found, from 
Lemma \ref{expansive-diameter} every cover whose components have 
a diameter at most $\epsilon$ is a generator. An advantage of choosing the
biggest e-constant allowed by Corollary \ref{e-constant} is that the 
the intersection pattern of covers with larger 
diameter becomes simpler, hence the associated simplicial complex 
is of lower complexity. This gives  better upper bounds for the simplicial
growth in every dimension at every stage, as predicted by Proposition \ref{stage-bound}.
\end{remark}

\begin{remark}  \label{hitchin}
Being property-e invariant under homeomorphisms (or conjugation) of $\Gamma$-systems,
then so are the numbers associated to  $\text{ent}_{0}(V, \Gamma, \rho)$ 
and the minimal complexity that a  generator can have.
In contrast, if we intersect Remark \ref{kervaire} with  N. Hitchin's result  asserting that 
in every dimension bigger than $8$ and equal to either $1 \ (\text{mod}\ 8)$ or $2 \ (\text{mod}\ 8)$
there are exotic spheres not  admitting metrics of positive scalar 
curvature (see \cite{Law-Mic}\footnote{Thanks to Professor T. Friedrich for some key points on the subject.}), 
then the estimate in Corollary \ref{e-constant} becomes  more  intriguing (after   
normalization of  volume or  diameter, say), despite its simplicity.
\end{remark}



\section{Infinite dimensional examples}   \label{graphs}

\subsection{$\Gamma$-shifths}   \label{shifts}

Consider a compact finite dimensional  and  connected manifold, 
say $V$, and infinitely many copies of it
indexed by a discrete amenable group  $\Gamma$. Hence the total space is $\mathbb{V} := V^{\Gamma}$.
Endow $\mathbb{V}$ with the weakest topology that makes the projection
in all the copies of $V$ continuous; then by Tychonov's Lemma the space
$\mathbb{V}$ is compact for this topology. Moreover, the set of all open covers for $\mathbb{V}$,
namely $\mathcal{C}_{\mathbb{V}}$, coincides with $(\mathcal{C}_{V})^{\Gamma}$.

Elements in $\mathbb{V}$ are maps $v : \Gamma \to V$, thus the natural 
representation of $\Gamma$ on $\text{Aut}(\mathbb{V})$ is given, for each
$v$ in $\mathbb{V}$ and  every pair $\{\ \gamma, \gamma'\   \}$ in $\Gamma$, by
$$ (\rho(\gamma') v)(\gamma) = v(\gamma' \gamma) .     $$

A systematic study of $\Gamma$-spaces of this type,  of $\Gamma$-invariant
subsets of them (called $\Gamma$-subshifts), was presented in \cite{holomorphic-maps}. 
Estimates on the complexity growth of those systems in the spirit of  $\text{Dim}(\mathbb{V}, \Gamma, \rho)$ 
are provided in \cite{holomorphic-maps}, but with an 
emphasis on metric and (co)homological observables. Instead of explaining   
results from \cite{holomorphic-maps}
(an interesting task indeed !) we  obtain, for some $\vec{\alpha}$
in $\mathcal{C}_{\mathbb{V}}$, estimates
for $\text{Dim}(\vec{\alpha}, \Gamma, \rho)$ and for the family
$\{\ \text{ent}_{k}(\vec{\alpha}, \Gamma, \rho)\ \vert\ k \in \mathbb{N}\    \}$.

Consider  $\vec{\alpha} = \prod_{\Gamma} \alpha(\gamma)$ in $\mathcal{C}_{\mathbb{V}} = (\mathcal{C}_{V})^{\Gamma}$.
Then  $\vec{\alpha}(\gamma)$ denotes the component of $\vec{\alpha}$ indexed by
the coordinate $\gamma,$ namely $\alpha(\gamma)$.  Natural  constructions
attributed to K\"unneth for Cartesian products of simplicial complexes (see \cite{Hatcher} or \cite{Lefschetz},
for example) ensure that for every $k$ the set $\triangle_{k}(\vec{\alpha})$ of $k$-simplices in 
$K(\vec{\alpha})$ is decomposed in products of simplices in each of the
$K(\alpha(\gamma))$ as $\gamma$ varies, i.e.
$$ \triangle_{k}(\vec{\alpha}) = 
\coprod_{\{\vec{k}\ \lvert\ \lvert \vec{k} \rvert = k\}}\  \prod_{\gamma \in \Gamma} \triangle_{k(\gamma)}(\alpha(\gamma))  , $$
where the (disjoint) union is over all the vectors $\vec{k}$ in $\mathbb{N}^{\Gamma}$ such that
$$\lvert \vec{k} \rvert = \sum_{\gamma \in \Gamma} k(\gamma)$$ 
is equal to $k$. That decomposition extends naturally to the Abelian group
$C_{k}(\vec{\alpha}, G)$ of $k$ chains on $K(\vec{\alpha})$  with coefficients in  $(G,+)$ (see Section \ref{multi-index}),
and the boundary operator is compatible with such a decomposition (with the obvious \textit{plus}
or \textit{minus} signs).

The representation  $\rho: \Gamma \to \text{Aut}(\mathbb{V})$  induces (see Section \ref{dynamics}) an 
action on $\mathcal{C}_{\mathbb{V}}$. If $\vec{\alpha} = \prod_{\gamma \in \Gamma} \alpha(\gamma)$ 
then for every $\delta$ in $\Gamma$ we have  
$\rho(\delta) \vec{\alpha} = \prod_{\gamma \in \Gamma} \alpha(\delta \gamma)$,
i.e. the components of $\vec{\alpha}$ are translated by $\delta$, so that
$(\rho(\delta)\vec{\alpha})(\gamma) = \alpha(\delta \gamma)$.

Thus whenever $F$ is a finite subset of $\Gamma$ the refinement  of $\vec{\alpha}$
under the action of the inverse of the elements in $F$ is given by
$$\vec{\alpha}_{F} := \bigcap_{\delta \in F} \rho(\delta)^{-1} \vec{\alpha} =
\prod_{\gamma \in \Gamma} (\ \bigcap_{\delta \in F} \alpha(\delta \gamma)\ ) ,   $$
i.e. the component of $\vec{\alpha}_{F}$ in the coordinate $\gamma$ is given
by the common refinement of the subset $\{\ \alpha(\delta \gamma)\ \vert\ \delta \in F\ \}$
of  components of $\vec{\alpha}$, so that 
$\vec{\alpha}_{F}(\gamma) = \bigcap_{\delta \in F} \alpha(\delta \gamma)$.

\bigskip

Choose some $\gamma'$ in $\Gamma$ and consider $\vec{\alpha}$ as being:
\begin{itemize}
\item $\vec{\alpha}(\gamma) = \alpha_{p}$ if $\gamma = \gamma'$, where $\alpha_{p}$ 
is an irreducible cover whose nerve $K(\alpha_{p})$  has dimension  $p$, where 
$ 1 \leq p \leq d$, and $d$ is the dimension of $V$.
\item $\vec{\alpha}(\gamma) = \{ V \}$ if $\gamma \neq \gamma'$, where $\{ V \}$
is the trivial cover for $V$. 
\end{itemize}

Hence for every subset $F$ of $\Gamma$ the open cover $\vec{\alpha}_{F}$
is given by:
\begin{itemize}
\item $\vec{\alpha}_{F}(\gamma) = \alpha_{p}$ if $\gamma = \delta \gamma'$ for some $\delta$ in $F$.
\item $\vec{\alpha}_{F}(\gamma) = \{ V \}$ otherwise.
\end{itemize}

Making use of K\"unneth's relations we check that $\text{Dim} K(\vec{\alpha}_{F})$ is given by 
$$\lvert F \rvert  \text{dim} K (\alpha_{p})  = p \lvert F \rvert$$ 
for every subset $F$ of $\Gamma$, 
whence $\text{Dim}( \vec{\alpha} , \Gamma, \rho )$
is equal to $p$.

To estimate the simplicial growth we observe that
$$\mathcal{G}_{0}(\vec{\alpha}_{F}) = \mathcal{G}_{0}(\alpha_{p})^{\lvert F \rvert} ,$$ 
and a little  bit of algebra 
along the lines of Lemma \ref{Gk} ensures, making use of K\"unneth's relations, that
$$ \text{ent}_{k}(\vec{\alpha} , \Gamma , \rho) = \text{ent}_{0}(\vec{\alpha}, \Gamma, \rho) = 
\log \lvert \triangle_{0}(\alpha_{p})  \rvert $$
for every $k$.

We conclude that $\{ c(n) \}_{n \in \N} = \{ n \}_{n \in \N}$ is a 
controlling sequence both for the simplicial and the dimension growth of 
$\{ K(\vec{\alpha}_{n}) \}_{n \in \N} \equiv \{ K(\vec{\alpha}_{F(n)})  \}_{n \in \N}$.


\subsection{Pyramids and prismatic covers}

Consider on $V$  irreducible covers  all of whose members have at least one
point in common; if  $\alpha$ is  an open cover  with this property
we say that $\alpha$ is a \textbf{prismatic} cover for $V$. 
Prismatic covers satisfy:
\begin{enumerate}
\item \label{prism-1} $\text{dim} K(\alpha) = \mathcal{G}_{0}(\alpha)  -1$.
\item \label{prism-2} If $k$ is less or equal than $\text{dim} K(\alpha)$ then
$$\mathcal{G}_{k}(\alpha)  = \binom{ \mathcal{G}_{0}(\alpha)  }{k+1} + 
\mathcal{G}_{k-1}(\alpha).$$
\end{enumerate} 

Observe that if $V$ is  connected, then   a prismatic cover whose nerve has dimension 
smaller or  equal than the dimension of $V$ always exists (indeed, prismatic covers get rid of all 
the topology on $V$, if any).

Here 
$(\ \mathbb{R}^{\Gamma}, \lVert\ \rVert_{1}\ ) \equiv (\ X, \lVert\ \rVert_{X}\ )$ is the Banach space
of arrays of real numbers indexed by elements of a discrete group $\Gamma$  with
the $l_{1}$-norm. Elements in $X$ are functions $x : \Gamma \to \mathbb{R}$
such that  
$ \lVert x \rVert_{1} :=  \sum_{\gamma \in \Gamma}\ \lvert x(\gamma) \rvert < \infty $,
and a basis for this linear space is given by the maps
$\{\ e_{\gamma} : \Gamma \to \{0,1\}\ \vert\ \gamma \in \Gamma\ \}$ 
satisfying $e_{\gamma}(\gamma') = 1$ if  $\gamma = \gamma'$, and zero otherwise,
so every  element $x$ in $X$ can also be written as a sum
$\sum_{\gamma \in \Gamma}\ x(\gamma)\ e_{\gamma}$.

The predual of $(\ \mathbb{R}^{\Gamma}, \lVert\ \rVert_{1}\ )$ is the Banach space
 $(\ \mathbb{R}^{\Gamma}, \lVert\ \rVert_{\infty}\ ) = (\ Y, \lVert\ \rVert_{Y}\ )$  
of functions $y : \Gamma \to \mathbb{R}$ such that
$ \lVert y \rVert_{\infty} := \sup_{\gamma \in \Gamma}\ \lvert y(\gamma) \rvert < \infty $.
A basis for this linear space is given by the maps
$\{\ e^{\gamma} : \Gamma \to \{0,1\}\ \vert\ \gamma \in \Gamma\ \}$ 
such that $\langle e^{\gamma} , e_{\gamma'} \rangle = 1$ if $\gamma = \gamma'$,
and zero if $\gamma \neq \gamma'$, therefore $\langle e^{\gamma} , x \rangle = x(\gamma)$ for every $x$ in $X$.

\bigskip

On $X$ one can also consider families of seminorms given by 
$$p_{C}(x) := \max \{\ \lvert \langle y , x  \rangle \rvert :=
\lvert \sum_{\gamma \in \Gamma} y(\gamma) x(\gamma) \rvert\ \vert\ y \in C\ \},$$ 
where $C$ ranges
over arbitrary finite subsets on $Y$. The open sets associated to that
family of seminorms generate a Hausdorff topology on $X$;
an application of Tychonov's Lemma ensures that
bounded sets  in $X$ are compact in this topology.

\bigskip

Let $V := X \cap \{\ x\ \vert\ \lVert x \rVert_{1} \leq 1\ ,\ x(\gamma) \geq 0\ \text{for every $\gamma$}\ \}$ be 
the part of the unit ball in $(\ \mathbb{R}^{\Gamma}, \lVert\ \rVert_{1}\ )$
all of whose coordinates are non-negative, 
and consider the topology induced by the family of seminorms $\{ p_{C}  \}$. Then $V$ is  closed, convex,
Hausdorff and compact, and can be spanned by convex linear combinations of
its extremal points, say $E(V)$, that consists of the set  
$\{\ e_{\gamma}\ \vert\ \gamma \in \Gamma\ \}$
together with the origin in $X$, that we denote by $e_{0}$ if no confussion arises.
We regard $e_{0}$ as the \textbf{apex} of the \textbf{pyramid}   $V$, and say that the convex set spanned
by the rest of the extreme points is the \textbf{base} of $V$. Note that the base of
$V$ consists of maps $v : \Gamma \to [0,1]$ such that 
$\lVert v \rVert_{1} = \sum_{\gamma \in \Gamma} v(\gamma) = 1 $.

\bigskip

We pick the family  $\{ p_{C} \}$ as follows: if $F$ is a finite subset of $\Gamma$ we denote by
$p_{F}$ the seminorm  on  $X$ given by
$ p_{F}(x) := \max \{\  \lvert x(\gamma) \rvert\  \vert\ \gamma \in F\    \}  $.

The family  $\{\ p_{F}\ \vert\ F\ \text{is a finite subset of}\ \Gamma\ \}  $
of seminorms on $X$  provide the desired properties on $V$. Note that 
given $T$ bigger than zero:

\begin{itemize}
\item On one side $p_{F}(x) < T$ if and only if for every $\gamma$ in $F$ we have $\lvert x(\gamma) \rvert < T$,
hence 
$$\{\ x\ \vert\ p_{F}(x) < T\ \} = \bigcap_{\gamma \in F} \{\ x\ \vert\ p_{\gamma}(x) < T\    \} .   $$
\item On the other side  $p_{F}(x) > T$ if and only if there exists some $\gamma$ in $F$ with 
$\lvert x(\gamma) \rvert > T$, therefore
$$\{\ x\ \vert\ p_{F}(x) > T\ \} = \bigcup_{\gamma \in F} \{\ x\ \vert\ p_{\gamma}(x) > T\    \} .   $$
\end{itemize}

Choose a positive number $a$ small enough (in fact smaller than $0.5$), and for every $\gamma$ 
in $\Gamma$  define an 
open cover $\alpha[\gamma]$ with two members for $V$ as follows:

\begin{itemize}
\item The open set $A_{\gamma}$ is given by those $v$ in $V$
such that $p_{\gamma}(v) > 0.5 - a $. Thus $A_{\gamma}$ consists of those elements
in $V$ whose distance from the vertex $e_{\gamma}$ is smaller
than $0.5 + a$, the distance being the one induced by the 
norm $\lVert\ \rVert_{1}$ on $X$. 

\item The open set $A_{\gamma}'$ is given by those $v$ in $V$ such that
$p_{\gamma}(v) < 0.5 + a$, i.e. $A_{\gamma}'$ contains the apex of $V$ 
and those points whose  distance from the vertex $e_{\gamma}$
is larger than $0.5 - a$. 
\end{itemize}

We construct two examples in this setup, one that does not use the group structure of $\Gamma$
at all, meanwhile the other  uses such a structure.

\begin{enumerate}

\item \label{no-group} In this example $\Gamma$ could be any denumerable infinite set. Using the 
covers $\alpha[\gamma]$ yet mentioned we construct, for every
finite subset $F$ of $\Gamma$, an open cover $\alpha[F]$ whose cardinality is $\lvert F \rvert + 1$,
containing two types of open sets:

\begin{itemize}

\item  For each $\gamma$ in $F$ we have an open set $A_{\gamma}$, as before. Observe that
the union  $\bigcup _{\gamma \in F} A_{\gamma}$ consists of those
points in $V$ with $p_{F}(v) > 0.5 - a$.

\item An open set $A_{F}'$ is given by those $v$ in $V$ such that
$p_{F}(v) < 0.5 + a$, therefore $A_{F}'$ is the intersection
$\bigcap_{\gamma \in F} A'_{\gamma}$.
\end{itemize}

One verifies that $\alpha[F]$ is a prismatic cover for $V$ whenever $F$ is a finite subset of $\Gamma$,
also that $\alpha[F']$ is finer than $\alpha[F]$ whenever $F$ is a subset
of $F'$.

Let $\{F(n)\}_{n \in \N}$
be an increasing sequence of subsets exhausting $\Gamma$, and consider the sequence of
complexes $\{\ K(\alpha[F(n)])\ \}_{n \in \N}$. Since $\alpha[F(m)] \succ \alpha[F(n)]$
whenever $m > n$ we get a directed sequence $\{\ K(\alpha[F(n)]) , T_{n}\ \}_{n \in \N}$ of complexes
and maps, thus the constructions/results in Section \ref{control-convergence} can be used.

Being $\alpha[F(n)]$ a prismatic cover for every $n$, the simplex $K(\alpha[F(n)])$
has the maximal number of simplices allowed, say. It is easy to see that 
$\{\ \log \lvert F(n) \rvert\ \}_{n \in \N}$ controls the simplicial growth of
$\{\ K(\alpha[F(n)])\ \}_{n \in \N}$, and for every $k$ we have 
$$ \text{ent}_{k}( \alpha[F(n)] , \log \lvert F(n)  \rvert) = k +1 ,$$
showing that the estimates in Theorem \ref{controlling-growth} are sharp.

In this example  $\{ \lvert F(n) \rvert \}_{n \in \N}$ controls the dimension growth
of $\{\ K(\alpha[F(n)])\ \}_{n \in \N}$, and we see that 
$\text{Dim}(\alpha[F(n)] , \lvert F(n) \rvert )$ is  equal to one.

\item We consider a representation $\rho: \Gamma \to \text{Aut}(V)$ that leaves 
the apex fixed and translates the coordinates in the base, so that if
$$v = v(0)e_{0} + \sum_{\gamma \in \Gamma}\ v(\gamma) e_{\gamma} ,$$
then 
$$\rho(\delta) v = v(0) e_{0} + \sum_{\gamma \in \Gamma}\ v(\delta \gamma) e_{\gamma}. $$

For every $\gamma$ the open cover $\alpha[\gamma]$ is given by 
$\{ A_{\gamma} , A'_{\gamma} \}$, hence  $\rho(\delta)\alpha[\gamma]$
is just $\alpha[\delta \gamma]$. Therefore for every finite subset
$F$ of $\Gamma$ the  cover $\alpha[\gamma]_{F}$ is given, according to 
Section \ref{dynamics}, by
$$ \bigcap_{\delta \in F} \rho(\delta)^{-1} \alpha[\gamma] = 
\bigcap_{\delta \in F} \alpha[\delta^{-1} \gamma]  .  $$

The  covering  $\alpha[\gamma]_{F}$ is not irreducible if $F$ has at least
two elements, however the cover $\alpha[F^{-1} \gamma]$
constructed in \ref{no-group} is finer than $\alpha[\gamma]_{F}$ (and
prismatic). Hence for every $k$ whenever $F$ is a finite subset of $\Gamma$
we have the equality
$$ S_{k}(\alpha[\gamma]_{F}) = 
\mathcal{G}_{k}(\alpha[F^{-1} \gamma]) .   $$

As in \ref{no-group}, choose an increasing family of subsets $\{ F(n) \}_{n \in \N}$
exhausting $\Gamma$, to infer that the simplicial growth up to dimension $k$
for the sequence 
$\{\ K(\alpha[\gamma]_{F(n)})\   \}_{n \in \N}$ is  polynomial of degree $(k+1)$,
this for every $k$, and  the dimension growth is linear, and equal to one.

\begin{remark}
Being the simplicial growth of polynomial type the statements
in Remark \ref{sensibility} are not relevant.
\end{remark}

\end{enumerate}


\section{Finite dimensional manifolds and property-e}  \label{geometrization}

As explained in Section \ref{expansivity}, if $V$ admits an expansive
action of a group (semigroup)
$\Gamma$  we can, in a precise sense, reconstruct a complex that  is homeomorphic to $V$
if we take as an initial condition the nerve associated to an open cover that is a
generator for $(V , \Gamma, \rho)$. Moreover, if  $V$  is endowed
with a Riemannian metric and its dimension is bigger than one all the estimates in
Section \ref{good-generator} for the
e-constant, the (minimal) complexity of generating covers, and their relation to the (topological) entropy
provide interesting  information (sometimes without much effort).

If the dimension of $V$ is either one or two the
classification of closed orientable manifolds is complete and extremely simple.  In those cases if
$V$ and $W$ are homotopically equivalent finite and boundaryless simplicial complexes then they are 
homeomorphic, and even diffeomorphic if they are endowed with a smooth structure.

In the context of algebra, the simplest  groups and semigroups
are $\mathbb{Z}$ and $\N$ respectively. Thus to understand expansive actions of 
groups and/or semigroups on closed
orientable manifolds it is natural to begin with the simplest examples, i.e. with $\Z$ and/or $\N$ actions on closed (orientable) manifolds, to then  consider Abelian actions of products of those.

\subsection{Dimension  one } \label{S1}

The only closed one dimensional manifold up to homeomorphism
is $S^{1}.$  If $\Gamma$ is equal to $\N,$ then $(S^{1} , \N)$ is expansive if one
considers the $\N$-action $n   : \theta \mapsto   k^{n}\ \theta$ for some fixed $k$ in $\Z $
whose absolute value is bigger than one. If no confussion arises we denote such
a representation by $f$ so that $f^{n}(\theta) = k f^{n-1}(\theta)$
whenever $n$ is a natural number.

Consider for simplicity the case when $k$ is equal to two. Let $\alpha$ be the open cover
 of $S^{1}$ given by  $\{\  ] -a\ ,\ \pi + a  [\  ,\  ] \pi - a , a   [\ \}$
for some positive $a$ that is small enough. Then $\alpha$ is a generator, and it
is easy to see that 
$\text{ent}_{0}(\alpha, \mathbb{Z} , f) = \text{ent}_{0}(S^{1}, \mathbb{Z} , f) = \log 2$ (see Figure 1).


\begin{figure}[h]
\begin{center}
\includegraphics[scale=0.6]{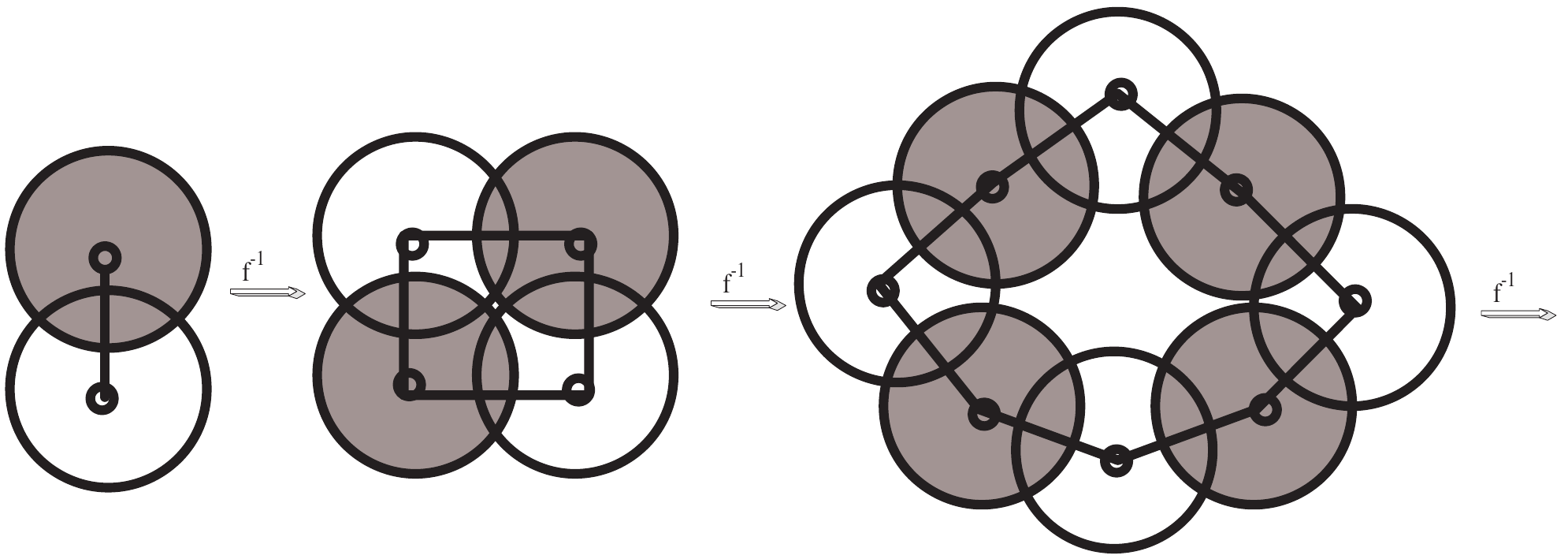}
\caption{{\scriptsize A schematic evolution of the nerve of $\bigcap_{n=0}^{T} f^{-n}\alpha$  when $f : S^{1} \to S^{1}$ is given
by $f(\theta) = 2 \theta$. Here  $\alpha$ consists of two semicircles overlapping in a neighborhood of
$\theta = 0$
and $\theta = \pi$, with $T$ being equal to $0, 1$ or $2$ (from left to right).}}
\end{center}
\end{figure}


\subsection{Dimension  two}

Closed manifolds of dimension two, also known as compact Riemann surfaces,
are the basic  test of (almost) every theory that  wishes to be extended to higher dimensions.

The classification of them up to diffeomorphism is extremely simple, and everyone can
distinguish among them by the number of holes (or the intersection form in the first homolgy
group with $\Z_{2}$ coefficients). Within the orientable ones  we will construct expansive actions of either  $\Z,  \N$
or $\N^{2}$, depending on the genus.


\subsubsection{Actions of $\Z$}

Expansive homeomorphisms (or expansive actions of $\Z$) in compact Riemann surfaces of positive
genus were constructed in \cite{obrien-reddy}. We briefly explain some ideas.

Consider the standard Anosov  homeomorphism on the 2-torus (see Section \ref{anosov} for general definitions),  namely the one induced by the
matrix
$$A =
\begin{bmatrix}
1  & 1 \\
1   & 2
\end{bmatrix} $$
on  $\R^{2}.$ Let $h : T^{2} \to T^{2}$ be the induced  homeomorphism that turns out to
be expansive, and note that if $h : W  \to W $ induces an
expansive action of $\Z$ then so does
$h^{k} : W \to W$ whenever $k$ is a positive integer.

Let $\Sigma_{g}$ denote the orientable Riemann
surface of genus $g,$ and consider a \textit{branched} cover $x : \Sigma_{g} \to \Sigma_{1} = T^{2}$ to construct a homeomorphism $f : \Sigma_{g} \to \Sigma_{g}$ by lifting $h^{k}$ through $x$ for
some $k.$ If the pair $(x , f)$ can be constructed, then $f : \Sigma_{g} \to \Sigma_{g}$ provides
an expansive action of $\Z$ on $\Sigma_{g}$ (observe that this is not true in higher dimensions because the branch set could have strictly positive dimension, and the dynamics of the lifted map, namely $f,$ need not be expansive therein).

Considering  standard relations that the map $x : \Sigma_{g} \to T^{2}$ should satisfy at the level of the fundamental groups to achieve a branched cover,  lifts of iterates of $h^{3} : T^{2} \to T^{2}$ are constructed
for every $g$ bigger than one in  \cite{obrien-reddy}, providing the desired expansive systems
$(\Sigma_{g} , \Z)$ whenever $g$ is different from zero.

Some years later K. Hiraide and J. Lewowicz  (see \cite{hiraide} and \cite{lewowicz}) found a natural relation between expansive actions of $\Z$ on hyperbolic Riemann surfaces and neat constructions/results on Teichm\"uller theory due to W. Thurston (see \cite{thurston}). The result in \cite{hiraide}-\cite{lewowicz} can be rephrased using the language
developed  in \cite{thurston}  as follows:


\begin{theorem}
(Hiraide-Lewowicz)
Let $\Sigma_{g}$ be a closed and orientable  hyperbolic Riemann surface, and assume that
$f : \Sigma_{g} \to \Sigma_{g}$ induces an expansive action of $\Z$ (those actions are known
to exist due to the constructions in \cite{obrien-reddy}). Let $\mathcal{T}(\Sigma_{g})$ denote
the Teichm\"uller space of $\Sigma_{g}$. Then for some $f^{*}$ conjugated to $f$
the induced action of  $f^{*}$ on the closure
of $\mathcal{T}(\Sigma_{g})$, denoted by
$$\overline{\mathcal{T}}(f^{*}) :  \overline{\mathcal{T}}(\Sigma_{g})  \to   \overline{\mathcal{T}}(\Sigma_{g}),        $$
has exactly two  fixed points. Those points are on the boundary of $\overline{\mathcal{T}}(\Sigma_{g})$
and correspond to  projective classes of mutually transverse measured laminations on $\Sigma_{g}.$ One of those projective classes has a representative that  expands under the action of
$f^{*}$, while the other
contracts  (one says that $f$ is conjugated to a pseudo-Anosov diffeomorphism).
\end{theorem}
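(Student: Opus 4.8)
The plan is to reduce the statement to the single assertion that an expansive $f$ is topologically conjugate to a pseudo-Anosov homeomorphism, and then to read off the behaviour on $\overline{\mathcal{T}}(\Sigma_g)$ from Thurston's theory as a formal consequence. Indeed, once $f^{*}$ is pseudo-Anosov with transverse invariant measured foliations $(\mathcal{F}^{s},\mathcal{F}^{u})$ and dilatation $\lambda>1$, the compactification $\overline{\mathcal{T}}(\Sigma_g)$ is a closed ball of dimension $6g-6$ whose interior is $\mathcal{T}(\Sigma_g)$ and whose boundary is the space of projective measured foliations; the extension of $f^{*}$ to this ball is continuous, Brouwer's theorem already forces a fixed point, and the pseudo-Anosov structure upgrades this to north--south dynamics. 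The only two fixed points are then the boundary classes $[\mathcal{F}^{u}]$ and $[\mathcal{F}^{s}]$, which are mutually transverse by construction, with $[\mathcal{F}^{u}]$ expanded by $\lambda$ and $[\mathcal{F}^{s}]$ contracted by $\lambda^{-1}$. Thus the heart of the matter is the passage from expansivity to the pseudo-Anosov model.

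To build that model intrinsically I would start from an e-constant $\epsilon$ for $(V,\Gamma,\rho)$ with $\Gamma=\Z$ (Section \ref{expansivity}) and form, for each $x$, the local stable and unstable sets $W^{s}_{\epsilon}(x)=\{\,y:\ d(f^{n}x,f^{n}y)\le\epsilon\ \text{for all } n\ge 0\,\}$ and $W^{u}_{\epsilon}(x)$ defined with $n\le 0$. Expansivity says exactly that $W^{s}_{\epsilon}(x)\cap W^{u}_{\epsilon}(x)=\{x\}$ locally, so these continua meet transversally; the next step is to show that for all but finitely many $x$ they are genuine arcs, and that they assemble into a pair of transverse singular foliations of $\Sigma_g$. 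The admissible singularities are $k$-pronged with $k\ge 3$, and their number is constrained by an Euler--Poincar\'e count of the same flavour as Lemma \ref{euler-poincare}, applied to the index of the foliation rather than to a nerve.

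With the two transverse foliations in hand, I would verify that $f$ carries $W^{u}$-arcs to $W^{u}$-arcs stretching their transverse measure by a uniform factor $\lambda>1$ (the uniformity is where the single constant $\epsilon$ is used) and contracts the $W^{s}$-arcs by $\lambda^{-1}$. This identifies $(\mathcal{F}^{s},\mathcal{F}^{u},\lambda)$ with a measured-foliation pair in the sense of Thurston, whose realization theorem then supplies a pseudo-Anosov representative $f^{*}$ in the isotopy class of $f$; a final comparison of the two invariant foliations shows that $f$ and $f^{*}$ are in fact conjugate, not merely isotopic. At this point the conclusion about $\overline{\mathcal{T}}(\Sigma_g)$ follows from the first paragraph.

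The main obstacle is this intrinsic foliation construction together with the promotion of isotopy to honest conjugacy. The Nielsen--Thurston trichotomy controls only the isotopy class, and an expansive homeomorphism could a priori sit inside a periodic or reducible class; both alternatives must be excluded at the level of the dynamics. A reducible $f$ would possess, after passing to a power, an invariant essential core curve, and the restriction of $f$ to that circle would be an expansive homeomorphism of $S^{1}$, which do not exist; a periodic-type class is excluded because the uniformly stretched transverse foliation cannot coexist with zero-entropy behaviour. Making the local continua $W^{s}_{\epsilon},W^{u}_{\epsilon}$ rigorously into regular transverse foliations with only standard prong singularities, and controlling the transverse-measure distortion by the single constant $\lambda$, is precisely the delicate analytic work carried out by Hiraide and Lewowicz; everything surrounding it is comparatively formal.
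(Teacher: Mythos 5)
The paper does not prove this theorem at all: it is stated as a quotation of the results of Hiraide \cite{hiraide} and Lewowicz \cite{lewowicz}, rephrased in the language of Thurston \cite{thurston}, so there is no internal argument to compare yours against. Judged on its own terms, your outline does follow the strategy of those references: build local stable and unstable sets from an e-constant, show they assemble into a pair of transverse singular foliations with prong singularities and a uniform dilatation, conclude that $f$ is conjugate to a pseudo-Anosov map, and then read off the north--south dynamics on $\overline{\mathcal{T}}(\Sigma_{g})$ (exactly two fixed points, both on the boundary, one expanded and one contracted) from Thurston's theory. But as a proof it is circular at the decisive point: the passage from the local continua $W^{s}_{\epsilon}, W^{u}_{\epsilon}$ to genuine invariant measured foliations, which you yourself identify as ``the heart of the matter,'' is discharged by appealing to ``precisely the delicate analytic work carried out by Hiraide and Lewowicz,'' i.e.\ to the theorem being proved. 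A blind proof cannot delegate its only nontrivial step to the citation it is supposed to replace.

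There is also a genuine error in your handling of the Nielsen--Thurston trichotomy. Reducibility and periodicity are properties of the \emph{isotopy class}: a reducible class contains \emph{some} representative preserving an essential multicurve, but $f$ itself need not preserve any actual curve, so there is no circle to which you can restrict $f$, and expansivity is not an isotopy invariant (likewise, a homeomorphism isotopic to a finite-order map can perfectly well have positive entropy, so the ``zero-entropy'' exclusion of periodic classes also fails as stated). Fortunately both exclusions are superfluous on your main route: once a pair of transverse invariant measured foliations with uniform dilatation $\lambda > 1$ has been constructed for $f$ itself, the map is pseudo-Anosov up to conjugacy by definition, and the class is automatically neither periodic nor reducible; no separate case analysis is needed. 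Tightening the write-up to say exactly this, and acknowledging that the foliation construction is the content of \cite{hiraide}--\cite{lewowicz} rather than a formality you may assume, would turn your sketch into an honest account of why the theorem holds.
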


In \cite{hiraide} and \cite{lewowicz} it is stated  that $S^{2}$ does not
admit an expansive action of $\Z$.


\subsubsection{Actions of $\N$} \label{simplicialvolume}

It is rather easy to see that $V$ admits an expansive action of $\N$
only if  there exists  a map $f: V \to V$ whose degree it at least two. A necessary condition for the existence of such a map is
that the simplicial volume  of $V$ is equal to zero (see \cite{verde}),
and then $V$ must be either the two sphere or the two torus. On $T^{2}$ an expansive action of $\N$ can be easily constructed, although in $S^{2}$ it is
not possible to achieve that (see  Section \ref{expansive-N} for both issues).

\bigskip

Therefore to \textit{complete the program} of reconstructing every orientable closed manifold
whose dimension is two  from a simplicial complex that has a simpler structure we are led to consider higher rank actions.


\subsubsection{Actions of $\N^{2}$}

An expansive action of $\N^{2}$ on $T^{2} = S^{1} \times S^{1}$ can be achieved using expansive
actions of $\N$ on $S^{1}$ (see Section \ref{S1}) if one considers the remarks in
Section \ref{higher-rank} concerning cartesian products of expansive systems.

An expansive action of $\N^{2}$ on $S^{2}$ is constructed as a particular case
of Theorem \ref{smash} in Section \ref{higher-rank} (see Corollary \ref{Sd}).


\subsection{Higher dimensional examples}

There exist (partial) characterizations  of expansive actions on closed manifolds of the simplest
groups and semigroups, namely   $\Z$ and $\N$ respectively.


\subsubsection{Actions of $\Z$} \label{anosov}

Let $V$  be a closed manifold admitting two foliations of complementary dimension that are
transversal all over $V,$ and let $f : V \to V$ be a diffeomorphism preserving those foliations. Assume furthermore that
$f$ stricly expands the current corresponding to one of those foliations   and strictly contracts the
other one (see \cite{sullivan}).  One says that $(V , \Z, f)$ is \textit{Anosov}, and it is easy to see that Anosov systems provide examples of expansive  $\Z$-actions.

A good introduction to Anosov systems  can be found in  \cite{smale}, and modulo
examples unknown to the author in all the   systems of this type the underlying space
is, up to conjugation, an \textit{infra-nilmanifold}, i.e. up  to a finite cover and homeomorphism, 
a co-compact quotient of a connected simply connected
nilpotent Lie group, say $G$, the quotient being induced by the action of a discrete subgroup of $G$,
say $\Upsilon$, that is finitely generated, nilpotent, and has no elements of finite order
(see \cite{smale} again), generalizing linear automorphisms on tori. If 
$f : G/\Upsilon \to G/\Upsilon$ is Anosov, then the linear map induced at the level of Lie algebras
has no eigenvalues in the unit circle, and an important part of the structure of these systems can
be decoded by algebraic means (see \cite{lauret}).

One interesting feature of infra-nil-automorphisms with the Anosov property is that the 
observables involved in the estimates in Section \ref{good-generator} can be easily found.
For instance, if $\lambda$ is the largest eigenvalue of the linearization of $f$, then
$\text{ent}_{0}(V, \N, f) $ is equal to $\log \lambda$ (see \cite{sullivan}),
extending the pseudo-Anosov behavior, where $\lambda$  corresponds to the
expansion/contraction coefficient for the transverse measured laminations.


\subsubsection{Actions of $\N$} \label{expansive-N}

In \cite{expansive-expanding} is shown that on closed manifolds
a map $f : V \to V$ represents an expansive action of $\N$ if and only if such  a map is expanding in the sense of \cite{polynomial-growth}, namely if for some metric $d$ on $V$ and every point $v$ in $V$
there exists exists a neighbourhood of that point  such that $f^{\ast} d > d$ outside the diagonal therein.

The following discussion is based on $\cite{polynomial-growth}$. It is proved  that a necessary condition  for the existence of a map of this type on a closed manifold is that their universal cover is homeomorphic to $\R^{n}$. To achieve that M. Gromov notes that the lift of those maps to the universal cover are globally expanding for some metric
invariant under  deck transformations, a condition that is easy to verify.

The simplest examples
of this kind are induced by linear maps on $\R^{n}$ whose eigenvalues are greater than one and
that are  compatible with the free action of discrete  groups on $\R^{n},$ say
$\Upsilon : \R^{n} \to \R^{n},$ so that $V = \R^{n}/\Upsilon$ is compact. An invariant metric in those examples is of course the \textbf{very}-flat canonical  one (see \cite{besse}): every flat manifold of this
type admits an expanding action of $\N$, a result that  Gromov attributes to D. Epstein and M. Shub.

Assuming an upper bound on the Jacobian of the  map one sees that a necessary condition for the existence 
of an expanding map on a closed manifold, say $V,$ is that the fundamental group
must have polynomial growth. The analogous result without using the assumption that the map
is differentiable and obtained using  techniques from geometric group theory is due
to   J. Franks.

Hence  the  candidates are closed aspherical manifolds that do no admit metrics
of negative sectional curvature (see \cite{besse} or \cite{verde}, for example).
Needless to say, those are necessary conditions.

Posterior work of Shub (see \cite{polynomial-growth}) enables to assert  that
an expanding system $(V , \N)$ is conjugated to an
infra-nil-endomorphism if and only if
the fundamental group of $V$ contains a nilpotent subgroup of finite index.

Since the main  result in \cite{polynomial-growth} claims that
every finitely generated group with polynomial growth is virtually  nilpotent,
one concludes that \textit{every} expansive $\N$-action  on a closed
manifold is conjugated to an infra-nil-endomorphism.

It is worth mentioning the result of D. Epstein and M. Shub: it provides the only known
examples of closed manifolds, that are not products, with special holonomy (see \cite{besse}, 
and the foundational \cite{cal-geo}) and of dimension larger than two, 
allowing an expansive action.
Indeed, all  complex tori belong to this class, and the estimates of
Section \ref{good-generator} enriched with the Monge-Amp\`ere-Aubin-Calabi-Yau 
developments provide a play-ground.


\subsubsection{Higher rank actions} \label{higher-rank}

Let $\{ V_{\omega} \}_{\omega \in \Omega}$ be a finite collection of closed manifolds 
so that for each $\omega$ in $\Omega$
the space $V_{\omega}$ admits an expansive action of a group or semigroup  $(\Gamma_{\omega}, \rho_{\omega}).$
By means
of the set-theoretic characterization of property-e (Section \ref{expansivity}) one readily sees
that the Cartesian product of them, say
$V := \prod_{\omega \in  \Omega} V_{\omega}$, also admits an expansive action of 
$(\Gamma, \rho) := \prod_{\omega \in \Omega} ( \Gamma_{\omega}, \rho_{\omega})$.

\bigskip

Consider now the \textit{wedge sum} of the finite collection of spaces 
$\{ V_{\omega} \}_{\omega \in \Omega}$,  denoted 
by $\lor_{\omega \in  \Omega} V_{\omega}$, where in each of the $V_{\omega}$'s a base point $v_{\omega, 0}$ 
is understood. Inside the Cartesian product of  the $V_{\omega}$'s  collapse 
the wedge (sum) of the spaces to a point, to get the $\textit{smash}$ of
$\{ V_{\omega} \}_{\omega \in \Omega}$, usually written as $\wedge_{\omega \in \Omega} V_{\omega}$.

In the category of topological spaces (with base points) the smash product is  a commutative and associative 
self-functor. Hence if $(\Gamma_{\omega}, \rho_{\omega})$ is a group or semigroup acting on $V_{\omega}$ having
the base point $v_{\omega , 0}$ as  a fixed element  for every $\omega$, then  there is a natural  action of
$\{ (\Gamma_{\omega} , \rho_{\omega}) \}_{\omega \in \Omega }$ on  $\wedge_{\omega \in \Omega} V_{\omega}$, 
denoted by
$$\wedge_{\omega \in \Omega} (\Gamma_{\omega}, \rho_{\omega}) :   \wedge_{\omega \in \Omega} V_{\omega}    \to     \wedge_{\omega \in \Omega} V_{\omega}\ , $$
that is commutative and associative with respect to the different $\omega$ coordinates (in the same way
as $\prod_{\omega \in \Omega} (\Gamma_{\omega}, \rho_{\omega}):  \prod_{\omega \in \Omega} V_{\omega} \to  \prod_{\omega \in \Omega} V_{\omega} $).

\bigskip

The next result asserts that the property-e is preserved under the smash product.


\begin{theorem}    \label{smash}
Let $(V_{\omega} , \Gamma_{\omega}, \rho_{\omega})_{ \omega \in \Omega }$ be a finite family of  
systems with property-e, each of them having at least one fixed point,
where $\Gamma_{\omega}$ is a given group or semigroup, and $V_{\omega}$ is a closed manifold.
Then the system $(  \wedge_{\omega \in \Omega} V_{\omega} ,     \wedge_{\omega \in \Omega} \Gamma_{\omega} , 
\wedge_{\omega \in \Omega} \rho_{\omega} )$
is expansive as well provided  the base points are taken as invariant ones for the representation
$(\Gamma_{\omega}, \rho_{\omega})$,   for every $\omega$ in $\Omega$. 

\end{theorem}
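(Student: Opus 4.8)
The plan is to use the set-theoretic characterization of property-e established in Section~\ref{expansivity}, namely the existence of a generating cover. Since each $(V_\omega, \Gamma_\omega, \rho_\omega)$ is expansive, Lemma~\ref{expansive-diameter} gives for each $\omega$ a generator $\alpha_\omega$ whose members have small diameter. The idea is to build a generator for the smash product out of these individual generators, exploiting the fact that the smash is a quotient of the Cartesian product by the wedge, and that the base points are fixed by the action.

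First I would treat the Cartesian product $\prod_{\omega \in \Omega} V_\omega$ under $\prod_{\omega \in \Omega}(\Gamma_\omega, \rho_\omega)$. Here the argument is essentially the one already indicated in Section~\ref{higher-rank} for products: if $\alpha_\omega$ is a generator for each factor, then the product cover, whose members are the sets $\prod_\omega A_{i(\omega)}$ with $A_{i(\omega)} \in \alpha_\omega$, is a generator for the product system. Indeed, an array of indices over $\wedge_{\omega} \Gamma_\omega$ restricts, in each coordinate $\omega$, to an array over $\Gamma_\omega$; the intersection $\bigcap \rho(\gamma)^{-1}(\text{member})$ in the product factors coordinatewise, so by the generator property of each $\alpha_\omega$ it contains at most one point in each coordinate, hence at most one point in the product. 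This verifies property-e for the product.

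Next I would pass from the product to the smash $\wedge_{\omega \in \Omega} V_\omega$, which is the product modulo the wedge $\lor_{\omega} V_\omega$ (the subspace where at least one coordinate equals its base point $v_{\omega,0}$). The quotient map $q$ collapses the wedge to a single point $\ast$. The key structural fact I would use is that, since every base point $v_{\omega,0}$ is a \emph{fixed} point of $(\Gamma_\omega, \rho_\omega)$, the wedge is invariant under the product action, so the action descends to a well-defined action on the smash, as stated. I would then take the covering of the smash induced by the product generator together with one distinguished open set containing the collapsed point $\ast$, and check directly that for any index array over $\wedge_\omega \Gamma_\omega$ the corresponding intersection contains at most one point of $\wedge_\omega V_\omega$: away from $\ast$ this follows because $q$ is a homeomorphism off the wedge and the product cover already separates points there, while points equal to $\ast$ are all identified, so they contribute no extra distinct point.

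The main obstacle I anticipate is the behavior near the collapsed point $\ast$: one must ensure that the cover element containing $\ast$ still participates in the generator property, i.e.\ that the action does not mix $\ast$ with genuine product points in a way that produces two distinct points in some iterated intersection. Because $\ast$ is a fixed point of the smash action (being the image of the invariant wedge), the intersection $\bigcap \rho(\gamma)^{-1}$ of any member of the distinguished open set either stays at $\ast$ or, in the coordinates where the point is not a base point, is controlled by the individual generators. The verification that these two cases cannot coexist to yield two separated points is the delicate bookkeeping step, and it is exactly where invariance of the base points is essential; I would isolate this as the crux of the argument and dispatch the remaining cases by the coordinatewise factorization used for the product.
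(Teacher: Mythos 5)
Your reduction to the set-theoretic characterization and your treatment of the Cartesian product are fine (the product step is exactly the remark the paper itself makes at the start of Section \ref{higher-rank}). The gap is in the step you flag as the crux, and it is not mere bookkeeping: the dichotomy you propose --- ``away from $\ast$ the product cover separates; points equal to $\ast$ are all identified'' --- misses the actual failure mode. Any open member of a cover of $\wedge_{\omega} V_{\omega}$ that contains $\ast$ pulls back to a saturated open set containing the wedge, hence (by the tube lemma) contains $q\bigl( N_{V} \times W \,\cup\, V \times N_{W} \bigr)$ for some neighborhoods $N_{V} \ni v_{0}$, $N_{W} \ni w_{0}$; membership in this distinguished member is therefore a \emph{disjunction} over coordinates, and coordinatewise factorization breaks down exactly there. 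Concretely, take $[v,w] \neq [v',w']$ with all four coordinates away from the base points, $v \neq v'$ and $w \neq w'$. At a time $(\gamma_{1},\gamma_{2})$ at which the factor generators separate $v$ from $v'$ \emph{and} $w$ from $w'$, both images can still lie in the distinguished member for ``crossed'' reasons: $\rho_{1}(\gamma_{1})v$ close to $v_{0}$ (which places the first image in it regardless of its $W$-coordinate) while $\rho_{2}(\gamma_{2})w'$ is close to $w_{0}$ (which places the second image in it regardless of its $V$-coordinate). So every separation time furnished by expansivity of the factors can in principle be absorbed, and nothing in your proposal excludes this; the two cases you say ``cannot coexist'' do coexist.

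The theorem is still provable along your route, but closing it requires an argument your sketch does not contain: assuming the crossed configuration occurs at every pair of separating times, one shows (using that at most one of two $c_{V}$-separated points can lie in $N_{V}$, and similarly in $W$) that the configuration is consistent, i.e.\ after possibly exchanging the two points, $\rho_{1}(\gamma_{1})v \in N_{V}$ for \emph{all} separating times $\gamma_{1}$ and $\rho_{2}(\gamma_{2})w' \in N_{W}$ for all separating times $\gamma_{2}$; then, letting $\gamma_{2}$ range over \emph{all} of $\Gamma_{2}$ with $\gamma_{1}$ a fixed separating time, non-separation forces $\rho_{2}(\gamma_{2})w' \in N_{W}$ for every $\gamma_{2}$, and expansivity of the factor \emph{relative to its fixed point} $w_{0}$ then gives $w' = w_{0}$, i.e.\ $[v',w'] = \ast$, a contradiction. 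This use of expansivity against the base point (not merely against the other point's coordinate) is the mathematical content of the theorem and is absent from your proposal. Note also that the paper takes a different route altogether: it builds Riemannian metrics $g(t)$ on $V \times W$ degenerating along the wedge, so that $(V \times W, d_{g(t)})$ converges in the Gromov--Hausdorff sense to the smash, with the conformal factors $\kappa, \rho$ set equal to $1$ outside the $c_{V}$- and $c_{W}$-balls around the base points; that choice hard-wires the near-$\ast$ control above into the metric and yields the explicit e-constant $\min\{c_{V}, c_{W}\}$.
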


\begin{proof} For simplicity  consider the case when $\Omega$ has two elements, and
 $\Gamma_{\omega}$ coincides with $\N$ for
both $\omega$'s. So assume that $(V , \mathbb{N}, f)$ and $(W , \mathbb{N},  h)$ correspond to expansive actions of $\N$ on
$V$ and $W$ repectively, with $v_{0}$ and $w_{0}$ being fixed points for $f$ and $h$,
respectively. Then the  system $( V \wedge W, \mathbb{N}^{2}, f \wedge h)$ corresponds to an action of
$\mathbb{N}^{2}$  on $V \wedge W$.

Let $v_{0}$ and $w_{0}$ denote the base points of $V$ and $W,$ to construct a family
$\{ g(t) \}_{ t  \in\  ]0 , 1] }$ of Riemannian metrics on $V \times W$ as follows. Let
$\kappa : V \to [0,1]$ and $\rho : W \to [0,1]$ be functions  different from zero outside the
base points, smooth enough, but such that
$\lim_{v \to v_{0}} \kappa(v) = 0$ and $\lim_{w \to w_{0}} \rho(w) = 0$. Define
for each $t$ in $]0,1]$ the  Riemannian metric $g(t)$ on $V \times W$   by

$$ g(t) := (\  ( t + (1-t) \rho )  \ g_{V}\  )   \oplus (\  ( t  + (1 - t ) \kappa      )  \ g_{W}\    )  , $$
where $g_{V}$ and $g_{W}$ are   metrics on $V$ and $W,$ both of finite
diameter and of a suitable regularity.

Denote by $d_{g(t)}$ the distance on $V \times W$ induced by $g(t),$ and consider
the family of metric spaces $\{\ ( V \times W , d_{g(t)} ) \}_{ t \in\ ]0,1] }$.
As $t$ goes to zero the couple $( V \times W , d_{g(t)} ) $ ceases to be a metric space because
all the elements in $V \lor W$ (recall that base points are understood) are at zero distance.

After those remarks it is interesting to note:
\begin{lemma}
One has the convergence
$$( V \times W , d_{g(t)} )  \rightsquigarrow ( V \wedge W , d_{g(0)}) $$
in the Gromov-Hausdorff  sense as $t$ goes to zero (see \cite{verde}).
\end{lemma}

In the Gromov-Hausdorff metric space  identify $(V \times W , d_{g(t)})$ with
$(V \times W)_{t}$ for every $t$ in $[0,1]$, to denote by
$$\{ (\  (V \times W)_{t} , \mathbb{N}^{2} , (f \times h)_{t}\  )\ \}_{ t \in [0,1] }  $$
the collection of systems  obtained, where
$$(\   (V \times W)_{0} , \mathbb{N}^{2} ,  (f \times h)_{0}\  ) = 
(\  (V \wedge W , d_{g(0)} ) ,  \mathbb{N}^{2} ,   f \wedge h \  ).$$

Since by assumption both $(V ,\mathbb{N}, f)$ and $(W ,\mathbb{N}, h)$ are expansive systems, we conclude
that $(\  (V \times W)_{t} , \mathbb{N}^{2}, (f \times h)_{t}\ )$ is also expansive 
for every $t $ different from zero; indeed, the property of being expansive is a conjugacy invariant that
does not depend on the metric chosen (see Section \ref{expansivity}).

To conclude the proof  we add further conditions to  the functions $\kappa$ and $\rho$  to ensure 
the expansive property on $( V \wedge W  , \mathbb{N}^{2} , f \wedge h)$ 
\textit{thanks to the metric} $d_{g(0)}$.

Let $c_{V}$ and $c_{W}$ be expansivity constants for $( V , d_{V} , \mathbb{N}, f )$ and
$(W , d_{W}, \mathbb{N} , h),$ where $d_{V}$ and $d_{W}$ are the distance functions induced by
the Riemannian metrics $g_{V}$ and $g_{W},$ respectively. If $d_{V}(v , v_{0})$ is larger
than $c_{V}$ we require that $\kappa(v) = 1$, and if $d_{W}(w , w_{0})$ is bigger than
$c_{W}$ we demand that $\rho(w) = 1$.

Denote by $[ v , w]$  the point in $V \wedge W$ that is the image of   $(v , w)$
under the map from $V \times W$ to $V \wedge W$.
Observe that
$[ v , w_{0}] = [ v_{0} , w] = [ v_{0} , w_{0}]$ for every $(v , w)$ in $V \times W$, where
$[ v_{0} , w_{0} ]$ is a fixed point for
$$f^{n_{1}} \wedge h^{n_{2}} = ( f^{n_{1}} \wedge 1_{W}) \cdot ( 1_{V} \wedge h^{n_{2}}) =
( 1_{V} \wedge h^{n_{2}}  ) \cdot ( f^{n_{1}} \wedge 1_{W}  )$$
whenever $( n_{1} , n_{2} )$ is in $\N^{2}$.

Choose different  points $[ v , w]$ and $[ v' , w']$ in $V \wedge W,$ and exhaust all the possibilities
to infer the expansiveness of $( (V \wedge W  , d_{g(0)}) , \mathbb{N}^{2} , f \wedge h  )$ with  
e-constant $\min \{  c_{V} , c_{W}   \}$.

The extension to the general case is direct.
\end{proof}


 Consider the case when $\Omega$ has $d$ elements, and
 for every $\omega$ in $\Omega$ choose $(V_{\omega} , \Gamma_{\omega} , \rho_{\omega} )$ as being
 conjugated to $( S^{1} , \N , f )$
 with the $\N$-action on $S^{1}$  given by $f (\theta) = 2 \theta.$ Take $\theta = 0$
 as the base point in $S^{1}$ to construct the bouquet of $d$ circles $\lor_{d}\ S^{1}$, and note that
 $\wedge_{d}\ S^{1} = S^{d}$ is endowed with an action of $\N^{d}$ induced by $\wedge^{d}_{i=1} f^{n(i)}$.

Theorems \ref{generator-generates} and \ref{smash} together with Lemma \ref{conjugacy} give, thanks to 
(the proof of) the generalized Poincar\'e conjecture\footnote{Finished in the work of  W. Thurston, 
R. Hamilton and G. Perelman in dimension 3, M. Freedman in dimension 4, and S. Smale in higher dimensions.}: 

 \begin{corollary} \label{Sd}
 Let $V$ be an homotopy $S^{d}$. Then  there exists an expansive action 
 of $\N^{d}$ on $V$. If $\alpha$
 is a generator for $( V , \N^{d}, \wedge^{d} f )$ and $\{ F(n) \}_{n \in \N }$ is an increasing  sequence
 exhausting $\N^{d}$, then when  $n$ goes to infinity  the nerve of  
 $\alpha_{F(n)}$ is homeomorphic to
 $S^{d}$.
 \end{corollary}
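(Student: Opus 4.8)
The plan is to prove the statement in three movements: establish expansiveness for the \emph{standard} sphere, transport it to an arbitrary homotopy sphere, and finally read off the convergence of nerves from the general reconstruction theorem. The construction immediately preceding the statement already presents $S^d$ as the iterated smash $\wedge_{i=1}^d S^1$, each factor carrying the $\N$-action $f(\theta) = 2\theta$ of Section \ref{S1}. The essential observation that makes the smash construction admissible is that $\theta = 0$ is a fixed point of $f$ (since $2 \cdot 0 \equiv 0$), so it serves as an invariant base point for every factor. I would therefore first record the standard homeomorphism $\wedge_{i=1}^d S^1 \cong S^d$ (obtained by iterating $S^m \wedge S^n \cong S^{m+n}$), and then apply Theorem \ref{smash} to the family $\{(S^1, \N, f)\}_{i=1}^d$, concluding that $(S^d, \N^d, \wedge^d f)$ is expansive.

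To pass to a general homotopy sphere $V$, I would invoke the generalized Poincar\'e conjecture (referenced in the footnote above) to produce a homeomorphism $x : V \to S^d$. Since property-e is invariant under conjugation by a homeomorphism (Lemma \ref{conjugacy}), the representation defined by $\rho'(\gamma) = x^{-1} \cdot (\wedge^d f)(\gamma) \cdot x$ endows $V$ with an expansive action of $\N^d$, settling the first assertion.

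The convergence of nerves then follows from part (1) of Theorem \ref{generator-generates} with no further work: granting that $(V, \N^d, \rho')$ has property-e and that $\alpha$ is a generator, as the increasing sequence $\{F(n)\}_{n \in \N}$ exhausts $\N^d$ the complex $K(\alpha_{F(n)})$ becomes homeomorphic to $V$, hence to $S^d$. Both hypotheses are supplied by the statement.

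The difficulty here is one of bookkeeping rather than of substance. The two points requiring genuine care are the smash identification $\wedge_{i=1}^d S^1 \cong S^d$ and the verification that the base-point invariance demanded by Theorem \ref{smash} is really met by the dilation $f$. A further subtlety is that the generalized Poincar\'e conjecture is used only in its topological strength: in dimension four one obtains a homeomorphism but not necessarily a diffeomorphism, yet since property-e and the reconstruction of Theorem \ref{generator-generates} are metric-topological notions invariant under homeomorphism, no smoothness is lost and the argument proceeds uniformly in $d$.
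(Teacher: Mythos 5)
Your proposal is correct and follows essentially the same route as the paper: smash $d$ copies of $(S^{1},\N,f)$ with the fixed base point $\theta=0$ via Theorem \ref{smash} to get an expansive $\N^{d}$-action on $\wedge_{d}\,S^{1}=S^{d}$, transport it to the homotopy sphere $V$ through the homeomorphism supplied by the generalized Poincar\'e conjecture using Lemma \ref{conjugacy}, and conclude the nerve convergence from part (1) of Theorem \ref{generator-generates}. The paper's argument is exactly this chain of references, so no further comparison is needed.
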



\bigskip

Universidad Andres Bello

 \smallskip

 Departamento de Matem\'aticas

\smallskip

 Facultad de Ciencias Exactas

 \smallskip

 Rep\'ublica 220, Piso 2

 \smallskip

 Santiago, Chile

 \bigskip

\smallskip


\end{document}